\newtheorem{thm}{Theorem}[section]
\newtheorem{prop}[thm]{Proposition}
\newtheorem{cor}[thm]{Corollary}
\newtheorem{lem}[thm]{Lemma}
\newtheorem{conjecture}[thm]{Conjecture}
\theoremstyle{definition}
\newtheorem{defn}[thm]{Definition}
\newtheorem{remark}[thm]{Remark}
\newtheorem{finalremark}[thm]{Final Remark}
\newtheorem{exa}[thm]{Example}
\def\Aut{\mathrm{Aut}}
\def\A{\ensuremath{\mathcal{A}}}
\def\B{\ensuremath{\mathcal{B}}}
\def\C{\ensuremath{\mathcal{C}}}
\def\D{\ensuremath{\mathcal{D}}}
\def\I{\ensuremath{\mathcal{I}}}
\def\G{\ensuremath{\mathcal{G}}}
\def\N{\ensuremath{\mathcal{N}}}
\def\P{\ensuremath{\mathcal{P}}}
\def\Q{\ensuremath{\mathcal{Q}}}
\def\SS{\ensuremath{\mathcal{S}}}
\def\FF{\ensuremath{\mathbb{F}}}
\def\ZZ{\ensuremath{\mathbb{Z}}}
\def\QQ{\ensuremath{\mathbb{Q}}}
\def\Out{\operatorname{Out}}
\def\Aut{\mathrm{Aut}}
\newcommand{\Syl}{\operatorname{Syl}\nolimits}
\date{\today}
\title{On Quillen's conjecture for $p$-solvable groups}
\author{Antonio D\'{i}az Ramos}
\address{Departamento de {\'A}lgebra, Geometr{\'\i}a y Topolog{\'\i}a,
Universidad de M{\'a}\-la\-ga, Apdo correos 59, 29080 M{\'a}laga,
Spain.}
\thanks{Supported by MICINN grant RYC-2010-05663. Partially supported by MEC grant MTM2013-41768-P and Junta de Andaluc{\'\i}a grant FQM-213.}
\email{adiazramos@uma.es}
\begin{document}

\begin{abstract}
We give a new proof of Quillen's conjecture for solvable groups via a  geometric and explicit method. For $p$-solvable groups, we provide both a new proof using the Classification of Finite Simple Groups and an asymptotic version without employing it.
\end{abstract}

\maketitle
\section{Introduction}\label{section:Introduction and motivation}
Let $G$ be a finite group, let $p$ be a prime and let $\A_p(G)$ be the poset consisting of the non-trivial elementary abelian $p$-groups of $G$ ordered by inclusion. The homotopy properties of the topological realization $|\A_p(G)|$ were first studied in  \cite{Quillen1978}. There, Quillen introduced the following conjecture, where we denote by $O_p(G)$ the largest normal $p$-subgroup of $G$:
\begin{conjecture}[Quillen's conjecture]\label{conj:Quillen's}
If $|\A_p(G)|$ is contractible then $O_p(G)\neq 1$.
\end{conjecture}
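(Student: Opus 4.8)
\emph{Strategy.} The plan is to establish the contrapositive: if $O_p(G)=1$ then $|\A_p(G)|$ is not contractible, which I would prove in the strong form $\tilde{H}_*(|\A_p(G)|;\FF_p)\neq0$. (The reverse, easy implication follows by a conical contraction: if $O_p(G)\neq1$ and $B=\Omega_1(Z(O_p(G)))$, the zig-zag $A\le AB\ge B$ contracts $|\A_p(G)|$ onto $B$.) The structural starting point is that for any nontrivial finite group the generalized Fitting subgroup $K:=F^*(G)=F(G)E(G)$ is nontrivial and self-centralizing, $C_G(K)\le K$. When $G$ is $p$-solvable with $O_p(G)=1$ I would first check that $K$ is in fact a $p'$-group: $F(G)=\prod_{r\neq p}O_r(G)$ is $p'$ because $O_p(G)=1$; the $p$-part of $Z(E(G))$ is a normal $p$-subgroup of $G$, hence trivial; and $p$-solvability forces every component $L$ to have $L/Z(L)$ a nonabelian simple $p'$-group. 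Since $K$ is a $p'$-group and $A\cap K=1$ for any $p$-subgroup $A$, each elementary abelian $A$ injects into $G/K\hookrightarrow\Aut(K)$, so the problem reduces to the conjugation action of such $A$ on the $p'$-group $K$.

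\emph{Geometric heart.} I would fix a well-chosen elementary abelian subgroup $A$ of rank $n$, form the subgroup $KA=K\rtimes A\le G$, and analyze $\A_p(KA)$. Because $K$ is a $p'$-group, every nontrivial $p$-subgroup of $KA$ is $K$-conjugate to a subgroup of $A$, and a mod-$p$ transfer argument identifies $\tilde{H}_*(\A_p(KA);\FF_p)$ with invariants of a complex assembled from the fixed-point subgroups $C_K(A')$ as $A'$ ranges over the subgroups of $A$. The concrete target is the \emph{Quillen dimension} property $\tilde{H}_{n-1}(\A_p(KA);\FF_p)\neq0$: I would exhibit an explicit $(n-1)$-cycle, modeled on the Solomon--Tits generator for $\A_p(A)\simeq\bigvee S^{n-1}$, whose nonvanishing is guaranteed once $A$ acts on the relevant elementary abelian sections of $K$ with no nonzero fixed vectors, a condition one can arrange using $C_G(K)\le K$.

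\emph{Propagation.} I would then propagate the class from the local model $KA$ to $G$ by induction on $|G|$, together with a Quillen-type fiber comparison: the inclusions of the normalizers $N_G(A')$ and the normal subgroup $K$ organize $\A_p(G)$ so that nonvanishing of top homology for $KA$ forces nonvanishing for $G$. For solvable $G$ the layer $E(G)$ is trivial, $K=F(G)$ is a nilpotent $p'$-group, its sections are semisimple $\FF_q[A]$-modules that are understood explicitly, and the entire argument is elementary and free of the Classification.

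\emph{Main obstacle.} The crux, and the place where the Classification enters, is verifying the fixed-point and nonvanishing hypothesis when $E(G)\neq1$, that is, when $G$ has quasisimple $p'$-components such as $\SL_2(5)$ sitting inside a $7$-solvable group. One must control how elementary abelian $p$-subgroups of $\Aut(L)$ act on such an $L$ and confirm that the associated Koszul-type homology is nonzero; this is a component-by-component statement about finite simple groups of order prime to $p$, which I expect to require the Classification. The asymptotic version should sidestep this: for $p$ large relative to a fixed invariant of $G$ (the rank, or the part of $|K|$ prime to $p$), the exterior-algebra model forces a strictly positive count of top-dimensional spheres for purely numerical reasons, so non-contractibility follows without the list of simple groups. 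I anticipate this component computation being the main obstacle, precisely because it is the one step not reducible to the elementary linear algebra that suffices in the solvable case.
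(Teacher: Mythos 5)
Your proposal only addresses the $p$-solvable case of Conjecture \ref{conj:Quillen's} (the full conjecture is open; that restriction is also all the paper proves, in Theorems \ref{thm:QCsolvableproven} and \ref{thm:QCpsolvableproven}), and at the level of architecture it follows the standard reduction that the paper also uses: pass to the contrapositive, use a self-centralizing normal $p'$-subgroup ($F^*(G)$ in your version, $O_{p'}(G)$ via Hall--Higman in the paper's) to obtain a faithful action of an elementary abelian $A$ on a $p'$-group $K$, and prove Quillen dimension for $K\rtimes A$. The genuine gap is at the decisive step, the nonvanishing of $\widetilde H_{n-1}(|\A_p(KA)|)$. Your sufficient condition --- that $A$ act on the relevant elementary abelian sections of $K$ ``with no nonzero fixed vectors, a condition one can arrange using $C_G(K)\le K$'' --- is not available and indeed cannot be arranged: self-centralization yields only \emph{faithfulness}, and a noncyclic elementary abelian group acting coprimely on a nontrivial group always has nontrivial fixed points on some hyperplane, since $K=\langle C_K(I)\,|\, I \text{ hyperplane of } A\rangle$. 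Far from being an obstruction, these fixed points are the raw material of every known cycle construction: faithfulness gives $r$ independent hyperplanes $H_i$ with $C_K(H_i)>C_K(H)$ (Proposition \ref{prop:semidirectbasicsCpir}), and one must assemble a top-dimensional cycle from elements of these centralizers. For abelian or solvable $K$ this is elementary (Corollary \ref{cor:abelian}, Theorem \ref{thm:QDpcZ2solvableproven}), but when $K$ has nonabelian simple factors one needs commuting elements $c_i\in C_K(H_i)\setminus C_K(H)$ spread across the components, and that is exactly where the CFSG enters (Proposition \ref{prop:nleqmCFSG}). You explicitly defer this point (``the main obstacle\ldots which I expect to require the Classification''); but that step \emph{is} the theorem, so what you have is an outline of the known reduction rather than a proof.

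Two further problems. Propagation: induction on $|G|$ plus a ``Quillen-type fiber comparison'' is neither needed nor obviously workable; the correct mechanism, missing from your sketch, is to choose $A$ of \emph{maximal} $p$-rank $r$, so that $\A_p(KA)\subseteq\A_p(G)$ have the same dimension $r-1$, and then a nonzero top-dimensional cycle of the subcomplex remains a nonzero cycle (hence class) in the ambient complex. If $A$ is not of maximal rank this fails, as a lower-dimensional cycle can bound in $\A_p(G)$. Also, your parenthetical proof of the easy converse is incorrect as stated: for $B=\Omega_1(Z(O_p(G)))$ the group $AB$ need not be elementary abelian (take $G=S_4$, $p=2$, $A$ generated by a transposition), so the zig-zag $A\le AB\ge B$ leaves $\A_p(G)$; it is valid in the poset of all nontrivial $p$-subgroups, after which one invokes Quillen's homotopy equivalence between the two posets. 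Finally, a comparison worth recording: where your sketch does commit to a construction, it follows Alperin's join-of-spheres route (Solomon--Tits generators and explicit spheres), whereas the paper's actual mechanism is different --- it builds \emph{constructible classes}, i.e.\ sums of cones indexed by all Sylow $p$-subgroups with coefficients solving the linear system \eqref{equ:chainZiscycle}, proves the quotient-lifting Theorem \ref{thm:quotient} to reduce to the Frattini quotient of the (generalized) Fitting subgroup, and then exhibits $2$-systems via Theorems \ref{thm:Dsystem} and \ref{thm:existenceof2system}, with no induction or minimal counterexample. Either route still requires the CFSG input you postponed.
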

We start discussing some of the known progress on this conjecture as well as we briefly comment on the methods employed so far. If the implication \ref{conj:Quillen's} holds for $G$ we say that \emph{$G$ satisfies $\Q\C$}. Let us introduce the following notion.

\begin{defn} Let $G$ be a finite group of $p$-rank $r$. We say that $G$ has \emph{Quillen dimension at $p$} if $O_p(G)=1\Rightarrow \widetilde H_{r-1}(|\A_p(G)|;\QQ)\neq 0$. 
\end{defn}

This notion was introduced by Ashchbacher and Smith in \cite[p.474]{AS1993} and we denote it by $\Q\D_p$. Note that $r-1$ is the top dimension for which $\widetilde H_*(|\A_p(G)|;\QQ)$ can possibly be non-zero. As contractibility leads to zero homology it is clear that:
\[
\text{$\Q\D_p$ holds for $G$$\Rightarrow$ $\Q\C$ holds for $G$.}
\]
Quillen observed the following:
\begin{equation}\label{equ:Quillenenoughforsolvablecase}
\text{$\Q\D_p$ for all $K\rtimes \FF_p^r$ with $K$ solvable $p'$-group $\Rightarrow$ $\Q\D_p$ for all $G$ solvable,}
\end{equation}
where $\FF_p^r$ is an elementary abelian group of order $p^r$ acting on $K$ and $K\rtimes \FF_p^r$ is their semidirect product. In fact, the left hand side of the implication above was proven by Quillen himself \cite[Corollary 12.2]{Quillen1978}, leading to the solution of the conjecture in the solvable case. This is an inductive proof built on Cohen-Macaulay posets. Alperin supplied  an alternative approach that examines a minimal counterexample via coprime action results \cite[Theorem 8.2.9]{Smith2011}. A further observation of Quillen is:
\begin{equation}\label{equ:Quillenenoughforpsolvablecase}
\text{$\Q\D_p$ for all $K\rtimes \FF_p^r$ with $K$ $p'$-group $\Rightarrow$ $\Q\D_p$ for all $G$ $p$-solvable.}
\end{equation}
Note that the solvability requirement on $K$ has been dropped. In this case, the left hand side, and hence $\Q\C$ for $p$-solvable groups, holds by an extension of the aforementioned argument of Alperin via the Classification of the Finite Simple Groups (CFSG). See \cite[Theorem 1]{Alperin1990}, \cite[Theorem 8.2.12]{Smith2011} and \cite[Theorem 0.5]{AS1993}. 

As contractiblity produces zero homology with any trivial coefficients, it makes sense to include in the above discussion homology with (trivial) coefficients in any abelian group $A$. In this work, we investigate certain  homology classes that we term \emph{constructible classes} and that form a subgroup $\widetilde H^c_{r-1}(|\A_p(K\rtimes \FF_p^r)|;A)$ of $\widetilde H_{r-1}(|\A_p(K\rtimes \FF_p^r)|;A)$, see Definition \ref{defn:constructiblelclass} for details. 

\begin{defn}\label{def:QDpA} Consider the semidirect product $K\rtimes \FF_p^r$ with  $K$ a $p'$-group and let $A$ be any abelian group. We say that $K\rtimes \FF_p^r$ has \emph{constructible Quillen dimension at $p$ and $A$} if $O_p(K\rtimes \FF_p^r)=1\Rightarrow \widetilde H^c_{r-1}(|\A_p(K\rtimes \FF_p^r)|;A)\neq 0$. 
\end{defn}
We denote this condition by $\Q\D^{c,A}_p$. By the expression $\Q\D^c_p$ we mean that $\Q\D^{c,A}_p$ holds for some abelian group $A$. Note that for the semidirect product $K\rtimes \FF_p^r$:
\begin{equation}\label{equ:O_p=1sameasfaithfulaction}
O_p(K\rtimes \FF_p^r)=1\Leftrightarrow\text{ the action of $\FF_p^r$ on $K$ is faithful.}
\end{equation}
As before, we have that:
\begin{equation}\label{equ:enoughforsolvablecase}
\text{$\Q\D^c_p$ for all $K\rtimes \FF_p^r$ with $K$ solvable $p'$-group $\Rightarrow$ $\Q\C$ for all $G$ solvable}
\end{equation}
and
\begin{equation}\label{equ:enoughforpsolvablecase}
\text{$\Q\D^c_p$ for all $K\rtimes \FF_p^r$ with $K$ $p'$-group $\Rightarrow$ $\Q\C$ for all $G$ $p$-solvable.}
\end{equation}

A constructible class for the group $K\rtimes \FF_p^r$ is defined by a choice of an element of $A$ for each Sylow $p$-subgroup of this group, $a_\cdot=(a_S)_{S\in \Syl_p(K\rtimes \FF_p^r)}$, such that certain homogeneous linear equations are satisfied. Non-trivial solutions of these equations gives rise to non-trivial elements in $\widetilde H^c_{r-1}(|\A_p(K\rtimes \FF_p^r)|;A)$. So, roughly speaking, we have removed a subdivision when computing constructible homology classes. This means that we only need to look at the top two layers of this poset, i.e., Sylow subgroups and their hyperplanes.  We show below that constructible classes suffice to prove Quillen's conjecture in the $p$-solvable case. We start with the following findings. 

\begin{thm}\label{thm:Kisqgroupabelianasymptotic} For the elementary abelian group $\FF_p^r$ or order $p^r$ we have:
\begin{enumerate}[(a)]
\item \label{thm:Kisqgroupabelianasymptotic.qgroup}$\Q\D^{c,\ZZ_q}_p$ holds for $K\rtimes \FF_p^r$ with $K$ a $q$-group and $q\neq p$. 
\item \label{thm:Kisqgroupabelianasymptotic.abelian} $\Q\D^{c,\ZZ_2}_p$ holds for $K\rtimes \FF_p^r$ with $K$ an abelian $p'$-group.
\end{enumerate}
\end{thm}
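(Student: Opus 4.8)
The plan is to convert the existence of a non-zero constructible class into the solvability of a homogeneous linear system, and then to solve that system using the coprime structure of $K$ together with the special features of the coefficient ring in each case.

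I would begin with a dimension reduction. The longest chains in $\A_p(G)$ have exactly $r$ members $V_1<\dots<V_r=S$ (the top term being a maximal elementary abelian subgroup of rank $r$, hence a Sylow $p$-subgroup), so $|\A_p(G)|$ has dimension $r-1$ and $C_r(|\A_p(G)|;A)=0$. Consequently every $(r-1)$-cycle is automatically a non-trivial homology class, and it suffices to produce a non-zero constructible cycle, i.e.\ a non-zero tuple $(a_S)_{S\in\Syl_p(G)}$ solving the equations of Definition~\ref{defn:constructiblelclass}. Since $K$ is a $p'$-group, $\Syl_p(G)$ is a single $G$-orbit, identified with $K/N_K(\FF_p^r)$, so the tuples $(a_S)$ form the permutation module $A[\Syl_p(G)]$. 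The equations live in the top two layers of the poset and are indexed by the hyperplanes $H$ (rank $r-1$ subgroups): the equation attached to $H$ has the shape $\sum_{S\supseteq H}c_{S,H}\,a_S=0$, where $c_{S,H}\in A$ is the unit Steinberg coefficient with which $H$ occurs in the coned apartment cycle of $S$. These equations are $G$-equivariant, so constructible classes are exactly $\ker\partial$ for a $G$-homomorphism $\partial\colon A[\Syl_p(G)]\to A[\{\text{hyperplanes}\}]$ of permutation modules, and the whole problem becomes showing $\ker\partial\neq0$. By \eqref{equ:O_p=1sameasfaithfulaction} faithfulness is equivalent to $N_K(\FF_p^r)\neq K$, hence to $|\Syl_p(G)|>1$.

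For part~(\ref{thm:Kisqgroupabelianasymptotic.qgroup}) I would work over the field $\ZZ_q$, where $q=|K|$'s prime, so that $A[\Syl_p(G)]=\ZZ_q[K/N_K(\FF_p^r)]$ is a module over the \emph{local} ring $\ZZ_q[K]$. The candidate kernel element is the $G$-invariant vector $\mathbf 1=\sum_S S$. Its image $\partial(\mathbf 1)$ is $G$-invariant, and on a hyperplane $H$ it is a signed count of the Sylow subgroups through $H$. After conjugating $H$ into $\FF_p^r$, these are precisely the Sylow $p$-subgroups of $C_G(H)=C_K(H)\rtimes\FF_p^r$ (note $H\le Z(C_G(H))$, so $H$ lies in every such Sylow), and their number is the index of a subgroup of the $q$-group $C_K(H)$, hence a power of $q$ and so $\equiv0$ in $\ZZ_q$ whenever it exceeds $1$. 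Thus $\partial(\mathbf 1)$ can only fail to vanish at hyperplanes lying in a unique Sylow subgroup; I would remove these using the local structure of $\ZZ_q[K]$ and the coprime decomposition $K=[K,\FF_p^r]\,C_K(\FF_p^r)$, passing from $\mathbf 1$ to a generator of the trivial $\ZZ_q[K]$-quotient of $\ZZ_q[\Syl_p(G)]$, which is non-zero because the group algebra is local, to land inside $\ker\partial$.

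For part~(\ref{thm:Kisqgroupabelianasymptotic.abelian}) I would work over $\ZZ_2$, whose role is to kill signs: every Steinberg coefficient $c_{S,H}$ becomes $1$, so $\partial$ is the mod-$2$ incidence matrix between Sylow subgroups and hyperplanes, and $\ker\partial$ is cut out by pure parity conditions. Since $K$ is abelian and $p'$, Maschke's theorem gives $K=C_K(s)\oplus[K,s]$ for each line $\langle s\rangle\le\FF_p^r$, and the number of Sylow subgroups through a hyperplane $H$ (with $\FF_p^r/H=\langle s\rangle$) equals $|[C_K(W),s]|$; I would choose the support of $(a_S)$ to match the $2$-divisible part of this incidence pattern, equivalently realising the class as the $\ZZ_2$-reduction of the non-zero rational class available for solvable groups. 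Faithfulness again guarantees via $C_K(\FF_p^r)\neq K$ that not all these indices collapse, so the resulting $\ZZ_2$-cycle is non-zero.

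The main obstacle in both parts is the same: controlling the incidence geometry between Sylow subgroups and their hyperplanes, that is, locating $\ker\partial$, and in particular handling hyperplanes that lie in a single Sylow subgroup, where the naive $G$-invariant vector fails. The leverage is exactly that $K$ is a $p'$-group acting faithfully: over $\ZZ_q$ the relevant incidence counts are forced to be $q$-powers by the $q$-group structure of the centralizer sections $C_K(H)$, while over $\ZZ_2$ the abelian coprime decomposition pins down their parities. In each case faithfulness, entering through $N_K(\FF_p^r)\neq K$, is what prevents $\partial$ from being injective and thereby yields a non-trivial constructible class.
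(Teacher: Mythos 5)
Your reduction to a homogeneous linear system is sound, and for part (\ref{thm:Kisqgroupabelianasymptotic.qgroup}) your core computation (the all-ones vector, incidence counts that are powers of $q$) is the same as the paper's (Remark \ref{remark:z_kconstant}, Corollaries \ref{cor:gcdgreaterthan1} and \ref{cor:Kisrhogroup}). But your setup mis-identifies which equations a constructible class must satisfy, and this opens a gap that your proposed fix cannot close. By Equation \eqref{equ:chainZiscycle} and Definition \ref{defn:constructiblelclass}, the cycle conditions are indexed \emph{only} by the $K$-conjugates of the $r$ coordinate hyperplanes $H_{[1]},\dots,H_{[r]}$ attached to a chosen basis of $H$; the boundary of each conjugated coned apartment ${}^{k_S}Z_{H,a_S}$ meets no other hyperplanes. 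Choosing that basis is exactly where faithfulness does its work: Proposition \ref{prop:semidirectbasicsCpir} produces $r$ linearly independent hyperplanes with $N(H_i)>1$, and after a change of basis these become the coordinate ones, so every hyperplane indexing an equation has $N\equiv 0 \pmod q$ and the constant vector is already a cycle. You never invoke this step; instead you allow unit-coefficient equations at hyperplanes lying in a unique Sylow subgroup and propose to handle them via the local structure of $\ZZ_q[K]$. That cannot work: $\partial$ is $K$-equivariant, so $\ker\partial$ is a $\ZZ_q[K]$-submodule of the transitive permutation module $\ZZ_q[\Syl_p(G)]$, whose socle is the line spanned by $\mathbf{1}$ (every non-zero module for a $q$-group in characteristic $q$ has non-zero fixed points, and transitivity makes the fixed points one-dimensional); hence $\ker\partial\neq 0$ forces $\mathbf{1}\in\ker\partial$. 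So the moment a single equation sits at a hyperplane contained in a unique Sylow, your kernel is zero and no choice of vector rescues it. Concretely, in Example \ref{exa:chainforsemidirectr=2} the diagonal hyperplane of each Sylow lies in that Sylow only, and including those equations kills every solution over $\ZZ_3$, whereas the paper's system (equations only at conjugates of $H_{[1]},H_{[2]}$) has the constant solution. The remedy is to change the apartment, i.e., the basis, which is precisely the missing idea; faithfulness must enter through Proposition \ref{prop:semidirectbasicsCpir}, not merely through $|\Syl_p(G)|>1$.

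Part (\ref{thm:Kisqgroupabelianasymptotic.abelian}) has a larger gap: you do not actually give a construction. Your incidence formula $N(H')=|[C_K(H'),s]|$ (reading $W=H'$) is correct for abelian $K$, but when $p=2$ every such number is odd (it divides the odd number $|K|$), so ``matching the support of $(a_S)$ to the $2$-divisible part of the incidence pattern'' has no content precisely in a typical case; no constant or incidence-parity-based choice can work, which is why a genuinely new idea is needed here. The paper's proof (Corollary \ref{cor:abelian}, via Theorems \ref{thm:existenceof2system} and \ref{thm:Dsystem}) picks elements $c_i\in C_K(H_{[i]})\setminus C_K(H)$ (non-empty by Proposition \ref{prop:semidirectbasicsCpir}, pairwise commuting because $K$ is abelian), lets $a_\cdot$ be the characteristic function of the $2^r$ Sylow subgroups ${}^{c_1^{\delta_1}\cdots c_r^{\delta_r}}H$ with $\delta_i\in\{0,1\}$, and proves that every relevant hyperplane meets this set in an even number of Sylows by an explicit pairing $S\mapsto {}^{c_i^{\pm 1}}S$. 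Your fallback, taking the mod-$2$ reduction of ``the non-zero rational class available for solvable groups,'' is unjustified twice over: a non-zero integral class can have vanishing mod-$2$ reduction, and even if it did not, you would still need the resulting class to be constructible in the sense of Definition \ref{defn:constructiblelclass}, which is what the theorem asserts and what citing the classical solvable-case result does not provide.
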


Here, $\ZZ_D=\ZZ/D\ZZ$.  We succinctly outline the families $a_\cdot$ utilised for this result. For case \eqref{thm:Kisqgroupabelianasymptotic.qgroup}, we choose $a_\cdot$ to take the constant value $1\in \ZZ_q$. Letting $a_\cdot$ to be the characteristic function of an appropriately selected subset of Sylow $p$-subgroups gives \eqref{thm:Kisqgroupabelianasymptotic.abelian}. For the next result, we show that the aforementioned linear equations have at least a non-trivial solution. 

\begin{thm}\label{thm:asymptotic}
$\Q\D^{c,\ZZ}_p$ holds for $K\rtimes \FF_p^r$ with $K$ a $p'$-group if $|K|={q_1}^{e_1}\cdots {q_l}^{e_l}$ satisfies that $r<q_i$ for all $i=1,\ldots,l$.
\end{thm}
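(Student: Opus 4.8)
The plan is to reduce the statement to a pure linear-algebra existence problem and then to a counting inequality that the arithmetic hypothesis $r<q_i$ is designed to force. By Definition \ref{defn:constructiblelclass}, a constructible class in $\widetilde H^c_{r-1}(|\A_p(K\rtimes\FF_p^r)|;\ZZ)$ is exactly a family $a_\cdot=(a_S)_{S\in\Syl_p(K\rtimes\FF_p^r)}$ of integers satisfying the associated homogeneous compatibility equations, one equation for each incidence datum of the top two layers, i.e.\ for each hyperplane (rank $r-1$ elementary abelian) sitting inside the Sylow $p$-subgroups. By \eqref{equ:O_p=1sameasfaithfulaction} the hypothesis $O_p(K\rtimes\FF_p^r)=1$ is the same as faithfulness of the action of $\FF_p^r$ on $K$, which is what makes these Sylow subgroups genuinely distinct and the system meaningful. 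Since nontrivial solutions produce nontrivial classes, to prove $\Q\D^{c,\ZZ}_p$ it suffices to exhibit one nonzero integer solution $a_\cdot$ of this system.

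First I would record the elementary reduction that a homogeneous integer linear system admits a nonzero integer solution if and only if it admits a nonzero rational one, i.e.\ if and only if the rank of its coefficient (incidence) matrix $M$ is strictly smaller than the number of unknowns $|\Syl_p(K\rtimes\FF_p^r)|$. In particular a sufficient, crude target is that the number of constraints, which is at most the number of hyperplanes occurring in the Sylow subgroups, is strictly less than the number of Sylow $p$-subgroups, since then $\operatorname{rank}M<|\Syl_p(K\rtimes\FF_p^r)|$ automatically. This is precisely the step that replaces the explicit families $a_\cdot$ of Theorem \ref{thm:Kisqgroupabelianasymptotic} (the constant function and a characteristic function) by a pure existence argument, matching the phrasing that the equations "have at least a non-trivial solution".

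Next I would organise the count via the $K$-action. The Sylow $p$-subgroups form a single $K$-orbit, while the hyperplanes, and the constraints they impose, break into $K$-orbits whose sizes divide $|K|=q_1^{e_1}\cdots q_l^{e_l}$ and are therefore products of the primes $q_i$. The role of $r<q_i$ is to keep the relevant elementary-abelian incidence numbers of a single Sylow copy of $\FF_p^r$, such as the count $\tfrac{p^r-1}{p-1}$ of its hyperplanes and the number of Sylow subgroups through a fixed hyperplane, small relative to these $q_i$-power orbit sizes, so that, compared orbit by orbit, the $K$-equivariant contribution to the number of constraints can never catch up with the contribution to the number of unknowns. I would make this quantitative by estimating the two $K$-orbit counts against each other and using that $q_i>r$ excludes the numerical coincidences needed for them to balance.

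The hard part will be exactly this counting inequality: turning "$r<q_i$ for all $i$" into a rigorous bound $\operatorname{rank}M<|\Syl_p(K\rtimes\FF_p^r)|$. The obstacle is that the hyperplane incidences are shared among the Sylow subgroups in a way that depends on how $\FF_p^r$ acts on $K$, so the crux is to control the $K$-orbits of hyperplanes and their stabilizers precisely enough to see each prime $q_i>r$ contribute strictly more unknowns than constraints. If a clean global bound on $\operatorname{rank}M$ proves elusive, the fallback is to construct an explicit rational null vector generalizing the coefficients of Theorem \ref{thm:Kisqgroupabelianasymptotic}; the difficulty then shifts to verifying the compatibility equations integrally and simultaneously for every $q_i$, where once more $r<q_i$ is what supplies the needed divisibilities.
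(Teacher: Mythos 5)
Your overall framework---viewing Equations \eqref{equ:chainZiscycle} as a homogeneous linear system over $\ZZ$ in the unknowns $(a_S)_{S\in\Syl_p(K\rtimes\FF_p^r)}$, and observing that a nonzero integral solution exists as soon as the number of equations is strictly smaller than the number of unknowns---is exactly the paper's strategy (the paper invokes the Smith normal form for that last step). But the counting inequality itself, which is the \emph{only} place where the hypothesis $r<q_i$ enters, is precisely what you postpone as ``the hard part,'' and the route you sketch toward it is misdirected in two concrete ways. First, the constraints are not indexed by all $\frac{p^r-1}{p-1}$ hyperplanes of each Sylow subgroup: by Definition \ref{defn:constructiblelclass} they run only over the $K$-conjugates of the $r$ coordinate hyperplanes $H_{[1]},\dots,H_{[r]}$, and by Proposition \ref{prop:semidirectbasics}(1) each Sylow subgroup contains exactly one conjugate of each $H_{[i]}$. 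Moreover, no orbit-by-orbit balancing or rank analysis is needed: the argument proving Proposition \ref{prop:semidirectbasics}(1) shows $N_K(H_{[i]})=C_K(H_{[i]})$, so the orbit of $H_{[i]}$ contributes exactly $|K|/|C_K(H_{[i]})|$ equations against $|K|/|C_K(H)|$ unknowns, and by Proposition \ref{prop:semidirectbasics}(2) the ratio of equations to unknowns is precisely $\sum_{i=1}^{r}1/N(H_{[i]})$.

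Second---and this step is missing entirely from your sketch---you must first use faithfulness of the action (equivalent to $O_p(K\rtimes\FF_p^r)=1$ by \eqref{equ:O_p=1sameasfaithfulaction}) through Proposition \ref{prop:semidirectbasicsCpir}, and change basis in $H$ so that $N(H_{[i]})>1$ for every $i$. This is not cosmetic: if some $N(H_{[i]})=1$, then the equation attached to each conjugate ${}^kH_{[i]}$ consists of the single term $a_{{}^kH}=0$, and letting $k$ range over $K$ these equations annihilate every unknown, so the system has only the trivial solution no matter how any counting goes. Once $N(H_{[i]})>1$ for all $i$, the conclusion is immediate, and this is where $r<q_i$ acts: $N(H_{[i]})=|C_K(H_{[i]})|/|C_K(H)|$ is an index of subgroups of $K$ and hence divides $|K|=q_1^{e_1}\cdots q_l^{e_l}$, so being greater than $1$ it is at least $\min_j q_j>r$, whence $\sum_{i=1}^{r}1/N(H_{[i]})\le r/\min_j q_j<1$, i.e.\ there are strictly fewer equations than unknowns. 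Note finally that your heuristic is inverted: the proof needs the number of Sylow subgroups through each coordinate hyperplane to be \emph{large} (greater than $r$), not ``small relative to the $q_i$-power orbit sizes.''
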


In view of the previous discussion, Theorem \ref{thm:asymptotic} is in fact an  asymptotic version of Quillen's conjecture for $p$-solvable groups. If $K$ is solvable, its Fitting subgroup is self-centralizing and has abelian Frattini quotient. These two techniques are of interest here because they preserve faithfulness and because of Equation \eqref{equ:O_p=1sameasfaithfulaction}. Hence, the solvable case can be studied via the abelian case above  together with the following fact that constructible classes behave extremely well with respect to quotients.

\begin{thm}\label{thm:conquotients}
Consider $K\rtimes \FF_p^r$ with $K$ a $p'$-group and let $N\unlhd K$ be an $\FF_p^r$-invariant subgroup of $K$. Let $A$ be any abelian group. If $\widetilde H^c_{r-1}(|\A_p(K/N\rtimes \FF_p^r)|;A)\neq 0$ then $\widetilde H^c_{r-1}(|\A_p(K\rtimes \FF_p^r)|;A)\neq 0$.
\end{thm}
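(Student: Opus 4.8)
The plan is to transport a nonzero constructible class for the quotient $\overline{G}:=(K/N)\rtimes\FF_p^r$ back to $G:=K\rtimes\FF_p^r$ along the canonical projection $\pi\colon G\to\overline{G}$, whose kernel is the $\FF_p^r$-invariant normal $p'$-subgroup $N$. The first observation is that, because $N$ is a $p'$-group, $\pi$ is injective on every $p$-subgroup of $G$ and hence restricts to an isomorphism $S\xrightarrow{\sim}\pi(S)$ for each Sylow $p$-subgroup $S$, carrying the hyperplane lattice of $S$ bijectively onto that of $\pi(S)$; in particular $\pi$ preserves ranks and induces a map on the top two layers of $\A_p$ (Sylow subgroups and their hyperplanes). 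Moreover $\pi$ is surjective on Sylow $p$-subgroups: every $\overline{S}\in\Syl_p(\overline{G})$ lifts, since its preimage $\pi^{-1}(\overline{S})$ is an extension of the $p$-group $\overline{S}$ by the normal $p'$-group $N$, and any Sylow $p$-subgroup of $\pi^{-1}(\overline{S})$ maps isomorphically onto $\overline{S}$.

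Given a nonzero constructible class for $\overline{G}$, represented by a family $\overline{a}_{\cdot}=(\overline{a}_{\overline{S}})_{\overline{S}\in\Syl_p(\overline{G})}$ solving the defining homogeneous system of Definition~\ref{defn:constructiblelclass}, I would set $a_S:=\overline{a}_{\pi(S)}$ for every $S\in\Syl_p(G)$. Since $\pi$ is surjective on Sylow subgroups and $\overline{a}_{\cdot}\neq 0$, the pulled-back family $a_{\cdot}$ is again nonzero, so the only real content is to check that $a_{\cdot}$ solves the defining system for $G$, that is, that it is constructible. Here I use that this system consists of one homogeneous equation per hyperplane $V\le G$ of rank $r-1$, each equation involving only the coefficients $a_S$ with $V\subseteq S$, weighted by orientation data coming from the local configuration of $V$ inside $S$.

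The key structural input for transferring these equations is a description of the \emph{star} of a hyperplane. Since $[S:V]=p$, the subgroup $V$ is normal in every Sylow subgroup $S$ containing it, so $\{S\in\Syl_p(G):V\subseteq S\}=\Syl_p(N_G(V))$, and likewise $\{\overline{S}\in\Syl_p(\overline{G}):\overline{V}\subseteq\overline{S}\}=\Syl_p(N_{\overline{G}}(\overline{V}))$ for $\overline{V}=\pi(V)$. Applying $\pi$ to $N_G(V)$ and using that its kernel $N\cap N_G(V)$ is a $p'$-group, one relates $\Syl_p(N_G(V))$ to $\Syl_p(N_{\overline{G}}(\overline{V}))$ via the standard comparison of Sylow $p$-subgroups under a $p'$-quotient. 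Because the isomorphism $\pi|_S$ matches the configuration $V\subseteq S$ with $\overline{V}\subseteq\overline{S}$, it preserves the orientation coefficients attached to each term, so I expect each $G$-equation indexed by $V$ to be, term by term, a pullback of the $\overline{G}$-equation indexed by $\overline{V}$; as $\overline{a}_{\cdot}$ satisfies the latter, $a_{\cdot}$ would satisfy the former.

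The step I expect to be the main obstacle is exactly this matching of equations, because $\pi$ need \emph{not} be injective on Sylow subgroups: over a fixed $\overline{S}\supseteq\overline{V}$ there may be several Sylow subgroups $S\supseteq V$ of $G$, their number being $|\Syl_p(N_G(V)\cap\pi^{-1}(\overline{S}))|$, which is governed by the coprime action of $\overline{S}/\overline{V}$ on $N\cap N_G(V)$. One therefore has to rule out that these fiber multiplicities distort the equation at $V$, either by showing they are uniform across all $\overline{S}\supseteq\overline{V}$, so that the $G$-equation is a fixed multiple of the $\overline{G}$-equation, or by verifying that the explicit weights of Definition~\ref{defn:constructiblelclass} already absorb them. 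I would resolve this hyperplane by hyperplane, reducing to Sylow counting inside $N_G(V)$ modulo the normal $p'$-subgroup $N\cap N_G(V)$ and invoking the coprime-action bookkeeping that the constructible formalism is built to encode.
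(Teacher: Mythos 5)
You take the same route as the paper's own proof (Theorem \ref{thm:quotient}): pull the coefficients back along the surjection $\Syl_p(\pi)\colon \Syl_p(G)\to\Syl_p(\bar G)$ by setting $a_S=\bar a_{\pi(S)}$, deduce non-triviality from surjectivity, and verify Equations \eqref{equ:chainZiscycle} by grouping the Sylow subgroups containing a fixed hyperplane according to their images under $\pi$. You also correctly locate the entire content of the theorem in a single point: the fiber multiplicities over the various $\bar S\supseteq\bar V$ must not distort the equation. But that is exactly where your proposal stops (``one therefore has to rule out\dots'', ``I would resolve\dots''), and of the two escape routes you offer only one is viable. The hedge that ``the explicit weights of Definition \ref{defn:constructiblelclass} already absorb them'' cannot work: by Proposition \ref{prop:dZG} the signatures are constant on the relevant equivalence classes, so Equations \eqref{equ:chainZiscycle} are plain unweighted sums $\sum_{S\in\N({}^kH_{[i]})}a_S=0$, one for each $K$-conjugate of each coordinate hyperplane $H_{[i]}$, and there is no coefficient available to absorb a varying multiplicity. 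Hence the uniformity of the fiber multiplicities is a genuinely missing step, not bookkeeping.

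The step is true, and the paper settles it by a short computation inside the semidirect product rather than by Sylow theory in $N_G(V)$. By Proposition \ref{prop:semidirectbasics}\eqref{prop:semidirectbasicscentralizerinquotient} the fiber of $\Syl_p(\pi)$ through $S$ is the $N$-orbit $\{{}^nS : n\in N\}$, and using Proposition \ref{prop:semidirectbasics}\eqref{prop:semidirectbasicsonlyconjugate} one checks that its intersection with $\N({}^kH_{[i]})$ is $\{{}^nS : n\in C_N({}^kH_{[i]})\}$, a set of size $m=|C_N({}^kH_{[i]})|/|C_N(S)|$. Uniformity is precisely where normality of $N$ in $K$ enters: writing $S={}^{k'}H$ gives $C_N(S)=N\cap{}^{k'}C_K(H)={}^{k'}\bigl(N\cap C_K(H)\bigr)$, so $|C_N(S)|=|C_N(H)|$ for every $S\in\N({}^kH_{[i]})$ and $m$ does not depend on $S$. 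Consequently the $G$-equation at ${}^kH_{[i]}$ equals $m$ times the $\bar G$-equation at ${}^{\pi(k)}\pi(H_{[i]})$, which vanishes. Your alternative framework would also close the gap if carried out: the fibers of $\Syl_p(N_G(V))\to\Syl_p(N_{\bar G}(\bar V))$ are orbits of the normal $p'$-kernel $N\cap N_G(V)$ acting by conjugation, and since all Sylow $p$-subgroups of $N_G(V)$ are conjugate in $N_G(V)$ while $N\cap N_G(V)$ is normal there, the stabilizers are conjugate and all orbits have the same length. Either way, this uniformity argument must actually be written down; without it the proof is incomplete.
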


From here and Theorem \ref{thm:Kisqgroupabelianasymptotic}\eqref{thm:Kisqgroupabelianasymptotic.abelian} we obtain the next result.

\begin{thm}\label{thm:QDpcZ2solvable}
$\Q\D^{c,\ZZ_2}_p$ holds for $K\rtimes \FF_p^r$ with $K$ a solvable $p'$-group.
\end{thm}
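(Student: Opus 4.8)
The plan is to reduce, by coprime action applied to the Fitting subgroup, to the abelian situation covered by Theorem~\ref{thm:Kisqgroupabelianasymptotic}\eqref{thm:Kisqgroupabelianasymptotic.abelian}, and then to invoke Theorem~\ref{thm:conquotients}. By \eqref{equ:O_p=1sameasfaithfulaction} we may assume the action of $\FF_p^r$ on $K$ is faithful and must produce a non-zero class in $\widetilde H^c_{r-1}(|\A_p(K\rtimes \FF_p^r)|;\ZZ_2)$. Set $F=F(K)$, the Fitting subgroup; it is characteristic in $K$, hence $\FF_p^r$-invariant. Since $K$ is solvable, $F$ is self-centralizing, i.e.\ $C_K(F)=Z(F)\le F$. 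As $\FF_p^r$ is a $p$-group and $K$ a $p'$-group, the action is coprime, so by the standard self-centralization consequence an element of $\FF_p^r$ centralizing $F$ centralizes all of $K$; thus the induced action of $\FF_p^r$ on $F$ is again faithful.

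First I would pass to the Frattini quotient of $F$. The subgroup $\Phi(F)$ is characteristic in $F$, hence characteristic in $K$ and $\FF_p^r$-invariant, and $F/\Phi(F)$ is abelian. For the coprime action of $\FF_p^r$ on the nilpotent $p'$-group $F$, acting trivially on $F/\Phi(F)$ forces (prime-by-prime, via the stability-group argument) trivial action on $F$; combined with the previous paragraph, the action of $\FF_p^r$ on $K/\Phi(F)$ remains faithful, so $O_p\bigl((K/\Phi(F))\rtimes\FF_p^r\bigr)=1$. Applying Theorem~\ref{thm:conquotients} to the $\FF_p^r$-invariant normal subgroup $N=\Phi(F)\unlhd K$, it suffices to produce the class for $K/\Phi(F)$. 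Replacing $K$ by $K/\Phi(F)$, I may therefore assume $F=F(K)$ is elementary abelian, self-centralizing, and carries a faithful $\FF_p^r$-action. Since $F$ is abelian with faithful action, Theorem~\ref{thm:Kisqgroupabelianasymptotic}\eqref{thm:Kisqgroupabelianasymptotic.abelian} yields a non-zero constructible class in $\widetilde H^c_{r-1}(|\A_p(F\rtimes\FF_p^r)|;\ZZ_2)$.

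The hard part will be the last step: transporting this class from $F\rtimes\FF_p^r$ to $K\rtimes\FF_p^r$. This is genuinely delicate because $F\rtimes\FF_p^r$ is a \emph{subgroup}, not a quotient, of $K\rtimes\FF_p^r$, so Theorem~\ref{thm:conquotients} does not apply in this direction; moreover one cannot in general reduce to an abelian \emph{quotient} of $K$, since $K^{\mathrm{ab}}$ may carry a non-faithful $\FF_p^r$-action even when $K$ does not (for instance when $K=F\rtimes H$ with $H$ acting irreducibly, so that $K^{\mathrm{ab}}$ sees only $H$). The resolution I would pursue exploits that $K$ is a $p'$-group, so $\FF_p^r$ is a Sylow $p$-subgroup of $K\rtimes\FF_p^r$ and the top-layer data on which constructible classes are defined—Sylow subgroups and their hyperplanes—is controlled by the $K$-fusion of the elementary abelian subgroups of $\FF_p^r$. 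The self-centralizing property $C_K(F)=F$ should force this fusion, and hence the defining homogeneous $\ZZ_2$-equations for the family $a_\cdot=(a_S)_{S\in\Syl_p(K\rtimes\FF_p^r)}$, to coincide with those for $F\rtimes\FF_p^r$, so that the solution realizing the class over $F\rtimes\FF_p^r$ lifts to one over $K\rtimes\FF_p^r$. Verifying this matching of the Sylow/hyperplane incidence under self-centralization is the crux of the argument.
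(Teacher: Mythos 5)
Your reduction chain is essentially the paper's: use self-centralization of the Fitting subgroup $F=F(K)$ to keep the $\FF_p^r$-action faithful, pass to the abelian Frattini quotient $F/\Phi(F)$ via Burnside, apply the abelian case (Corollary \ref{cor:abelian}, i.e.\ Theorem \ref{thm:Kisqgroupabelianasymptotic}\eqref{thm:Kisqgroupabelianasymptotic.abelian}), and combine Theorem \ref{thm:conquotients} with an inclusion of constructible classes coming from a subgroup. The only structural difference is the order of the two transport steps: the paper lifts from $(F/\Phi(F))\rtimes\FF_p^r$ to $F\rtimes\FF_p^r$ by Theorem \ref{thm:conquotients} and then pushes into $K\rtimes\FF_p^r$ along the subgroup inclusion, whereas you quotient $K$ by $\Phi(F)$ first and are then left with pushing a class from $(F/\Phi(F))\rtimes\FF_p^r$ into $(K/\Phi(F))\rtimes\FF_p^r$. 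Either order works. The genuine gap is exactly the step you yourself call ``the crux'' and leave unproved: transporting the class from the subgroup into the whole group. Worse, the mechanism you propose for it is off target: it is neither true nor needed that the cycle equations \eqref{equ:chainZiscycle} for the big group ``coincide'' with those for the subgroup --- the big group has more variables (one per Sylow subgroup) and in general more equations (one per $K$-conjugate of each $H_{[i]}$) --- and self-centralization of $F$ gives no such fusion control; it is only used earlier, for faithfulness.

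What actually makes the step work, and is the content of the paper's one-line conclusion via the inclusion $\widetilde H^c_{r-1}(|\A_p(F\rtimes\FF_p^r)|;\ZZ_2)\leq \widetilde H^c_{r-1}(|\A_p(K\rtimes\FF_p^r)|;\ZZ_2)$, is that a solution extends by zero. Let $L\leq K$ be any $\FF_p^r$-invariant subgroup, so $\Syl_p(L\rtimes\FF_p^r)=\{{}^{l}H: l\in L\}\subseteq\Syl_p(K\rtimes\FF_p^r)$, and let $(a_S)$ satisfy the equations \eqref{equ:chainZiscycle} for $L\rtimes\FF_p^r$; set $a_S=0$ for every other Sylow subgroup of $K\rtimes\FF_p^r$. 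Fix $i$ and $k\in K$ and look at the equation at ${}^kH_{[i]}$. If ${}^kH_{[i]}\leq {}^{l}H$ for some $l\in L$, then ${}^{l^{-1}k}H_{[i]}$ is a $K$-conjugate of $H_{[i]}$ lying in $H$, so Proposition \ref{prop:semidirectbasics}\eqref{prop:semidirectbasicsonlyconjugate} forces ${}^kH_{[i]}={}^{l}H_{[i]}$; consequently the possibly nonzero summands are indexed exactly by the Sylow subgroups of $L\rtimes\FF_p^r$ containing ${}^{l}H_{[i]}$, and the equation reduces to the already satisfied equation for $L\rtimes\FF_p^r$. If no such $l$ exists, the equation reads $0=0$. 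The extended family gives the same nonzero cycle in top dimension $r-1$, hence a nonzero constructible class. Applying this with $L=F/\Phi(F)$ inside $K/\Phi(F)$ closes your argument (the paper applies it with $L=F$ inside $K$). A secondary remark: your claim that the Fitting subgroup of $K/\Phi(F)$ is elementary abelian needs a Gasch\"utz-type identification $F(K/\Phi(F))=F(K)/\Phi(F)$, but you never need it --- all that is used is that $F/\Phi(F)$ is an abelian $\FF_p^r$-invariant subgroup on which the action is faithful.
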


This result together with Equation \eqref{equ:enoughforsolvablecase} has the following immediate consequence. 
\begin{thm}\label{thm:QCsolvable}
Quillen's conjecture holds for solvable groups.
\end{thm}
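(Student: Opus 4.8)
The plan is to derive Theorem \ref{thm:QCsolvable} as a direct consequence of the machinery already assembled in the excerpt, so the proof is essentially a bookkeeping step rather than a new argument. The key observation is that Theorem \ref{thm:QDpcZ2solvable} establishes $\Q\D^{c,\ZZ_2}_p$ for every semidirect product $K\rtimes \FF_p^r$ with $K$ a solvable $p'$-group. In particular, $\Q\D^c_p$ (namely $\Q\D^{c,A}_p$ for \emph{some} abelian group $A$, here $A=\ZZ_2$) holds for all such $K\rtimes \FF_p^r$. This is precisely the hypothesis appearing on the left-hand side of the implication \eqref{equ:enoughforsolvablecase}.

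The single inference I would carry out is to feed Theorem \ref{thm:QDpcZ2solvable} into Equation \eqref{equ:enoughforsolvablecase}, which asserts
\[
\text{$\Q\D^c_p$ for all $K\rtimes \FF_p^r$ with $K$ solvable $p'$-group $\Rightarrow$ $\Q\C$ for all $G$ solvable.}
\]
Since the antecedent is now verified for every prime $p$ and every $r$, the consequent follows: every finite solvable group $G$ satisfies Quillen's conjecture. That is, for each solvable $G$ and each prime $p$, if $|\A_p(G)|$ is contractible then $O_p(G)\neq 1$.

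I do not anticipate a genuine obstacle at this final stage, since all of the substantive work has been pushed into the earlier results. The only point requiring care is verifying that the reduction encapsulated in \eqref{equ:enoughforsolvablecase} is applied with matching quantifiers: the implication is stated uniformly over all solvable $p'$-groups $K$ and all ranks $r$, and Theorem \ref{thm:QDpcZ2solvable} indeed supplies $\Q\D^{c,\ZZ_2}_p$ with exactly this uniformity. I would also note, for completeness, that the passage from $\Q\D^{c,A}_p$ to $\Q\C$ relies on the general principle established in the introduction that constructible classes sit inside $\widetilde H_{r-1}(|\A_p(K\rtimes \FF_p^r)|;A)$ and that contractibility forces this homology to vanish with any coefficients $A$; thus a non-trivial constructible class obstructs contractibility. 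With that chain in place, the theorem is immediate.
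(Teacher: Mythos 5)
Your proposal is logically valid given what the paper \emph{states}, but it has a genuine gap when measured against what the paper \emph{proves}: the implication \eqref{equ:enoughforsolvablecase} that you feed Theorem \ref{thm:QDpcZ2solvable} into is never established anywhere in the paper except in the proof of this very theorem. The introduction only announces it with the words ``as before,'' in analogy with Quillen's observations, and the paper's proof of Theorem \ref{thm:QCsolvable} consists precisely of reproducing that reduction argument. So a proof that merely cites \eqref{equ:enoughforsolvablecase} is circular in the context of the paper: it defers the entire mathematical content of this step to an unproven assertion. Note also that your closing remark (a non-trivial constructible class obstructs contractibility) only disposes of groups that are themselves of the form $K\rtimes \FF_p^r$; the whole point of \eqref{equ:enoughforsolvablecase} is the passage to an \emph{arbitrary} solvable $G$, which need not be such a semidirect product.

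Concretely, the missing argument is the following (Quillen's reduction, via Hall--Higman). Let $G$ be solvable with $O_p(G)=1$, let $r$ be the $p$-rank of $G$, and choose $H\in \A_p(G)$ of rank $r$. By \cite[Lemma 1.2.3]{HH1956}, $K=O_{p'}(G)$ is self-centralizing, so any $p$-element centralizing $K$ lies in $K$ and is therefore trivial; hence $H$ acts faithfully on the solvable $p'$-group $K$, i.e.\ $O_p(K\rtimes H)=1$. Theorem \ref{thm:QDpcZ2solvableproven} then gives $\widetilde H^c_{r-1}(|\A_p(K\rtimes H)|;\ZZ_2)\neq 0$. Finally, since $r$ is the $p$-rank of $G$, the complex $\Delta(\A_p(G))$ has dimension $r-1$, so a non-trivial top-dimensional cycle supported on the subcomplex $\Delta(\A_p(K\rtimes H))$ cannot bound in $\Delta(\A_p(G))$; this yields the inclusion $\widetilde H_{r-1}(|\A_p(K\rtimes H)|;\ZZ_2)\leq \widetilde H_{r-1}(|\A_p(G)|;\ZZ_2)$, whence $\widetilde H_{r-1}(|\A_p(G)|;\ZZ_2)\neq 0$ and $|\A_p(G)|$ is not contractible. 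These three steps --- Hall--Higman self-centralization, faithfulness of the action of a maximal-rank $H$, and the top-homology inclusion --- are the substance of the paper's proof, and they are what your proposal omits.
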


For general $p'$-group $K$, the Frattini quotient of its generalized Fitting subgroup is the direct product of an abelian group with simple groups. Theorem \ref{thm:conquotients} again reduces the problem  to building constructible classes on this quotient. As in Theorem \ref{thm:Kisqgroupabelianasymptotic}\eqref{thm:Kisqgroupabelianasymptotic.abelian}, we choose $a_\cdot$ to be the characteristic function of a certain subset of Sylow $p$-subgroups, and we use the CFSG to single out this subset. 

\begin{thm}\label{thm:QDpcZ2p'}
$\Q\D^{c,\ZZ_2}_p$ holds for $K\rtimes \FF_p^r$ with $K$ a $p'$-group.
\end{thm}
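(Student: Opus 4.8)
The plan is to imitate the proof of Theorem \ref{thm:QDpcZ2solvable}, replacing the Fitting subgroup by the generalized Fitting subgroup $F^*(K)=F(K)E(K)$ (with $E(K)$ the central product of the components of $K$) and inserting the Classification of Finite Simple Groups exactly where the layer $E(K)$ shows up. I would rely on two structural facts: $F^*(K)$ is self-centralizing, $C_K(F^*(K))=Z(F^*(K))\le F^*(K)$, and its Frattini quotient is a direct product
\[
F^*(K)/\Phi(F^*(K))\;\cong\;A_0\times S_1\times\cdots\times S_m,
\]
with $A_0=F(K)/\Phi(F(K))$ an abelian $p'$-group and each $S_i$ a non-abelian simple group of order prime to $p$. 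Both facts preserve faithfulness of the coprime action of $\FF_p^r$: a nontrivial element centralizing $F^*(K)$ would, by self-centrality, centralize all of $K$; and one acting trivially on $F^*(K)/\Phi(F^*(K))$ would act trivially on $F^*(K)$, since for coprime action $F^*(K)=C_{F^*(K)}(\FF_p^r)\Phi(F^*(K))$ forces $C_{F^*(K)}(\FF_p^r)=F^*(K)$. Using Theorem \ref{thm:conquotients} together with these self-centrality and Frattini reductions (exactly as in the solvable case behind Theorem \ref{thm:QDpcZ2solvable}), it then suffices to produce a non-zero constructible class for $Q\rtimes\FF_p^r$, where $Q=A_0\times S_1\times\cdots\times S_m$ carries a faithful $\FF_p^r$-action.

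For the construction I would take $a_\cdot$ to be the characteristic function of a subset $\Sigma\subseteq\Syl_p(Q\rtimes\FF_p^r)$ and translate the defining homogeneous equations over $A=\ZZ_2$. As $Q$ is a $p'$-group, every Sylow $p$-subgroup is a $Q$-conjugate of $\FF_p^r$, so $\Syl_p(Q\rtimes\FF_p^r)$ is indexed by $Q/N_Q(\FF_p^r)$; the corank-one subgroups (hyperplanes) are the conjugates of the index-$p$ subgroups $E\le\FF_p^r$, and for such an $E$ one has $N_{Q\rtimes\FF_p^r}(E)=C_Q(E)\rtimes\FF_p^r$, so the Sylow $p$-subgroups above $E$ are exactly the $C_Q(E)$-conjugates of $\FF_p^r$. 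Over $\ZZ_2$ the cycle condition therefore becomes the parity requirement that, for every hyperplane, the number of members of $\Sigma$ lying above it be even; and, as noted after Definition \ref{defn:constructiblelclass}, any non-empty $\Sigma$ meeting this requirement already yields a non-zero class. The key quantity is thus the fixed-point subgroup $C_Q(E)=C_{A_0}(E)\times\prod_{i=1}^m C_{S_i}(E)$, whose structure controls all the incidences.

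The heart of the matter, and the main obstacle, is to control the factors $C_{S_i}(E)$ for the non-abelian simple groups; this is where the CFSG enters, since one needs to know, for each isomorphism type of simple $p'$-group $S$ and each faithful embedding $\FF_p^r\hookrightarrow\Aut(S)$, the orders and structures of the fixed-point subgroups $C_S(E)$ as $E$ runs over the index-$p$ subgroups of $\FF_p^r$. Running through the families of the classification, I would for each $S$ single out an explicit $\FF_p^r$-stable, non-empty union of Sylow $p$-subgroups whose incidence with every hyperplane is even, while the contribution of $A_0$ is handled precisely as in Theorem \ref{thm:Kisqgroupabelianasymptotic}\eqref{thm:Kisqgroupabelianasymptotic.abelian}. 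The delicate cases are the simple groups with large outer automorphism or Schur-multiplier data, where $\Aut(S)$ contains elementary abelian $p$-subgroups acting with atypically small fixed-point subgroups, together with the situation in which $\FF_p^r$ permutes the components $S_i$ nontrivially, where one must first partition the $S_i$ into $\FF_p^r$-orbits and argue orbit-by-orbit. Because $C_Q(E)$ couples all factors through the shared group $\FF_p^r$, the parity conditions do not split as a naive product, so assembling the local choices into a single global $\Sigma$ is itself part of the work.

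Granting such a $\Sigma$, its characteristic function is a non-zero constructible class, whence $\widetilde H^c_{r-1}(|\A_p(Q\rtimes\FF_p^r)|;\ZZ_2)\neq0$; Theorem \ref{thm:conquotients} then transports non-vanishing back to $K\rtimes\FF_p^r$, yielding $\widetilde H^c_{r-1}(|\A_p(K\rtimes\FF_p^r)|;\ZZ_2)\neq0$ whenever $O_p(K\rtimes\FF_p^r)=1$, which is exactly $\Q\D^{c,\ZZ_2}_p$ for $K\rtimes\FF_p^r$.
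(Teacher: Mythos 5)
Your reduction is the same as the paper's: pass to the generalized Fitting subgroup $F^*(K)$ by self-centrality, then to its Frattini quotient $D=A\times B$ (abelian times a direct product of nonabelian simple $p'$-groups) by Burnside, and transport non-vanishing back through Theorem \ref{thm:conquotients}; likewise your reformulation of the $\ZZ_2$-cycle condition as a parity requirement on a subset $\Sigma$ of Sylow $p$-subgroups is exactly the paper's notion of a $2$-system (Theorem \ref{thm:Dsystem}). The genuine gap is that at the crux --- actually producing $\Sigma$ --- you offer a plan rather than a proof. You propose to run through the families of the CFSG and, for every simple $p'$-group $S$ and every faithful embedding $\FF_p^r\hookrightarrow\Aut(S)$, determine the fixed-point subgroups $C_S(E)$ well enough to select an even-incidence set of Sylow subgroups, and you then concede that ``assembling the local choices into a single global $\Sigma$ is itself part of the work'' because the parity conditions do not split as a product. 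Neither step is carried out, and together they are the entire content of the theorem.

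The paper closes this gap with two tools absent from your proposal. First, Theorem \ref{thm:existenceof2system}: if there exist elements $c_i\in C_K(H_{[i]})\setminus C_K(H)$, $i=1,\ldots,r$, that pairwise commute, then the $2^r$ conjugates ${}^{c_1^{\delta_1}\cdots c_r^{\delta_r}}H$, $\delta_i\in\{0,1\}$, already form a $2$-system. This is precisely what solves your assembly problem: one needs only one commuting element per coordinate hyperplane, not a hyperplane-by-hyperplane choice of Sylow sets, and elements chosen in different direct factors of $D$ commute automatically, so the abelian part $A$ and the distinct $H$-orbits of components of $B$ never interfere. Second, the CFSG input is far narrower than the survey you envision: only claim \eqref{equ:pouterautomorphismsp'nonabeliansimplegroup} --- a nontrivial $p$-group of outer automorphisms of a nonabelian simple $p'$-group $X$ forces $X$ to be of Lie type with cyclic image consisting of field automorphisms --- is used. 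Combined with the elementary fixed-point analysis of permutation actions on a product of copies of $X$ (Lemma \ref{prop:nleqm}), it produces the required commuting elements: diagonal products of a common fixed point $c\in C_X(H')$ over suitable suborbits when a hyperplane splits the orbit, and commuting maximal-torus elements $c\in X^{\sigma}$, $d\in X\setminus X^{\sigma}$ in the field-automorphism case (Proposition \ref{prop:nleqmCFSG}). Without Theorem \ref{thm:existenceof2system} and Proposition \ref{prop:nleqmCFSG}, or substitutes for them, the hard cases you yourself flag (components permuted nontrivially, atypically small fixed points, coupling across factors) remain open, so the proposal does not prove the statement.
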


Now employing Equation \eqref{equ:enoughforpsolvablecase} we obtain the $p$-solvable case.

\begin{thm}\label{thm:QCpsolvable}
Quillen's conjecture holds for $p$-solvable groups.
\end{thm}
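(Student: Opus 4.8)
The plan is to derive Theorem~\ref{thm:QCpsolvable} as a formal consequence of the machinery already assembled, combining Theorem~\ref{thm:QDpcZ2p'} with the reduction encoded in Equation~\eqref{equ:enoughforpsolvablecase}. First I would observe that Theorem~\ref{thm:QDpcZ2p'} establishes $\Q\D^{c,\ZZ_2}_p$ for every semidirect product $K\rtimes \FF_p^r$ with $K$ an arbitrary $p'$-group. In particular, since $\Q\D^{c,\ZZ_2}_p$ is one instance of $\Q\D^c_p$ (the latter only requires that $\Q\D^{c,A}_p$ hold for \emph{some} abelian group $A$), we obtain that $\Q\D^c_p$ holds for all $K\rtimes \FF_p^r$ with $K$ a $p'$-group. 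This is exactly the hypothesis appearing on the left-hand side of the implication~\eqref{equ:enoughforpsolvablecase}.

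The second step is simply to invoke~\eqref{equ:enoughforpsolvablecase}, which asserts that $\Q\D^c_p$ for all such semidirect products implies $\Q\C$ for every $p$-solvable group $G$. Unwinding the notation, $\Q\C$ for $G$ is precisely the statement of Quillen's Conjecture~\ref{conj:Quillen's} for that group, namely that contractibility of $|\A_p(G)|$ forces $O_p(G)\neq 1$. Since the hypothesis of~\eqref{equ:enoughforpsolvablecase} has just been verified, the conclusion follows at once, and this yields Theorem~\ref{thm:QCpsolvable} for all $p$-solvable $G$.

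The only genuine content hiding in this argument lies in justifying the reduction~\eqref{equ:enoughforpsolvablecase} itself, which is attributed to Quillen and parallels~\eqref{equ:Quillenenoughforpsolvablecase}; the point is that the passage from $\Q\D_p$ to the weaker $\Q\D^c_p$ (constructible classes) must be compatible with the inductive reduction to the semidirect-product case. I would therefore check, or cite, that the reduction argument behind~\eqref{equ:Quillenenoughforpsolvablecase} goes through verbatim when $\Q\D_p$ is replaced by its constructible variant---this is presumably the role of the remarks preceding Definition~\ref{def:QDpA}, where the implications~\eqref{equ:enoughforsolvablecase} and~\eqref{equ:enoughforpsolvablecase} are stated as the constructible analogues of Quillen's original observations. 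Given that these implications are taken as established earlier in the paper, the proof of Theorem~\ref{thm:QCpsolvable} is a one-line concatenation.

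The main obstacle is thus not in the final deduction, which is purely formal, but in ensuring that all the scaffolding it rests on is in place: specifically that Theorem~\ref{thm:QDpcZ2p'} (whose proof requires the CFSG) and the reduction~\eqref{equ:enoughforpsolvablecase} are both genuinely available. Assuming both, as the excerpt permits, the theorem follows immediately.
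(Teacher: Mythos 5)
Your proposal is correct and follows essentially the same route as the paper: Theorem \ref{thm:QCpsolvable} is obtained by feeding Theorem \ref{thm:QDpcZ2p'} into the reduction \eqref{equ:enoughforpsolvablecase}. The only difference is one of packaging: where you cite \eqref{equ:enoughforpsolvablecase} as a black box, the paper's proof unwinds it explicitly --- for $p$-solvable $G$ with $O_p(G)=1$, Hall--Higman gives that $K=O_{p'}(G)$ is self-centralizing, so a maximal-rank $H\in\A_p(G)$ acts faithfully on $K$, Theorem \ref{thm:QDpcZ2p'proven} then yields a non-trivial class in $\widetilde H^c_{r-1}(|\A_p(K\rtimes H)|;\ZZ_2)$, and the inclusion $\widetilde H_{r-1}(|\A_p(K\rtimes H)|;\ZZ_2)\leq \widetilde H_{r-1}(|\A_p(G)|;\ZZ_2)$ (valid in top dimension $r-1$) shows $|\A_p(G)|$ is not contractible --- which is exactly the justification of the reduction you correctly identified as the only genuine content.
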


Going back to the initial discussion, we point out that Alperin's arguments for the solvable and $p$-solvable cases are aimed to build a homology sphere in top dimension $r-1$. This sphere is built by appropriately choosing points to form $r$ $0$-spheres, and the considering its join. Here, our constructible homology class in top dimension $r-1$ is made out of conical  pieces of that dimension glued along a graph, see Examples \ref{exa:chainforeagr=3},  \ref{exa:chainforsemidirectr=2} and \ref{exa:c3c3c3c2c2c2}. The weight of our arguments lies in the geometry of the situation, and we do not use neither induction nor minimal  counterexample reasoning. For more details on this comparison, please see Remarks \ref{rmk:liftstoZZjoinofspheres} and \ref{rmk:final}. 

The methods explained here are open in at least a couple of directions: First, one can also define the constructible subgroup $\widetilde H^c_{r-1}(|\A_p(G)|;A)$ for any finite group $G$ of rank $r$ and  any abelian group $A$. Nevertheless, the situation is more complicated as, to start with, the coprime action Propositions \ref{prop:semidirectbasics} and \ref{prop:semidirectbasicsCpir} do not hold in general. Second, it is easy to sharpen Theorem \ref{thm:existenceof2system} in several ways. For instance, the conditions $c_i$ normalizes $C_K(H)$ and $[c_i,c_j]\in C_K(H)$ for all $i$ and $j$, are also enough to have a $2$-system.

\begin{remark}\label{rmk:aposteriorifaithfulaction}
By \cite[Proposition 2.4]{Quillen1978}, which is the reversed implication of Conjecture \ref{conj:Quillen's}, the condition $\widetilde H_*(|\A_p(K\rtimes \FF_p^r)|;A)\neq 0$ forces the action of $\FF_p^r$ on $K$ to be faithful. In particular, if the thesis of Theorem \ref{thm:quotient} holds, we deduce that $\FF_p^r$ acts faithfully on $K$ and on $K/N$.
\end{remark}

\textbf{Acknowledgements:} I am really grateful to Stephen D. Smith for his time and generosity. He read an earlier version of this work and contributed many interesting and clarifying comments, specially those concerning buildings and the relation to Alperin's solution. He also pointed out some flaws in the examples and in the analysis of fixed points in Section \ref{section:Quillen'sconjecture} that are now fixed.

\textbf{Organization of the paper:} In Section \ref{section:Preliminaries} we introduce notation and basic facts about the semidirects products we are interested in. In Section \ref{section:chainelementaryabelianpgroups} we define, for each element $a\in A$, a chain in top dimension $r-1$ for $|\A_p(\FF_p^r)|$. Then, in Section \ref{section:chainsemidirectproduct}, we add up these chains to form a chain in top dimension for $|\A_p(K\rtimes \FF_p^r)|$. We also formulate the linear equations that must be satisfied in order for this chain to become a cycle in $\widetilde H^c_{r-1}(|\A_p(K\rtimes \FF_p^r)|;A)$. Case \eqref{thm:Kisqgroupabelianasymptotic.qgroup} of Theorem \ref{thm:Kisqgroupabelianasymptotic} and Theorem \ref{thm:asymptotic}  are also proven. In Section \ref{section:Dsystemsandquotients}, we introduce some tool needed to prove later in the same section the abelian case Theorem \ref{thm:Kisqgroupabelianasymptotic}\eqref{thm:Kisqgroupabelianasymptotic.abelian}. Afterwards, we prove Theorem \ref{thm:conquotients}. In the final Section \ref{section:Quillen'sconjecture}, we prove Theorems \ref{thm:QDpcZ2solvable}, \ref{thm:QCsolvable}, \ref{thm:QDpcZ2p'} and \ref{thm:QCpsolvable}.

\section{Preliminaries}\label{section:Preliminaries}
Throughout the paper, $p$ denotes a prime and $A$ denotes an abelian group. For a poset $\P$, its realization $|\P|$ is the topological realization of its order complex $\Delta(\P)$, i.e., of the simplicial complex whose $n$-simplices are the compositions $p_0<p_1<\ldots<p_n$ in $\P$. In particular, we compute the homology $\widetilde H_*(|\P|;A)$ via the simplicial complex $C_*(\Delta(\P);A)$. Here, $C_n(\Delta(\P);A)$ has one generator for each $n$-simplex of $\Delta(P)$ and $C_{-1}(\Delta(\P);A)$ has as generator the empty composition $\emptyset$. The differential $d\colon C_n(\Delta(\P);A)\to C_{n-1}(\Delta(\P);A)$ is given as usual by $d=\sum_{i=0}^n (-1)^id_i$, where $d_i$ forgets the $i$-th vertex of a given simplex. If $G$ is a finite group and $p$ is a prime, let $\A_p(G)$  denote the Quillen poset of $G$ at the prime $p$ \cite{Quillen1978}. This poset consists of the non-trivial elementary abelian $p$-subgroups of $G$ ordered by inclusion. We will pay special attention to $\A_p(G)$ for $G$ a semidirect product  $G=K\rtimes H$, where $H$ is an abelian $p$-group acting on the $p'$-group $K$. Note that in this situation $\Syl_p(G)={}^KH=\{kHk^{-1},k\in K\}$ and $|\Syl_p(G)|=|K|/|C_K(H)|$. We introduce the following notation.

\begin{defn}\label{defn:G=KHSKN}
Let $G$ be the semidirect product $G=K\rtimes H$, where $H$ is an abelian $p$-group and $K$ is a $p'$-group. 
For any subgroup $I\leq G$, we define $\N(I)$ as the subset of Sylow $p$-subgroups of $G$ that contain $I$ and we also set $N(I)=|\N(I)|$.
\end{defn}

The next two results are basic coprime action properties about the semidirect products we are concerned with.

\begin{prop}\label{prop:semidirectbasics}
Let $G$ be a semidirect product $G=K\rtimes H$, with $H$ an abelian $p$-group and $K$ a $p'$-group. Let $I$ be a subgroup of $H$. Then:
\begin{enumerate}
\item \label{prop:semidirectbasicsonlyconjugate}$I$ is the only conjugate of $I$ that lies in $H$.
\item \label{prop:semidirectbasicsnumbersupergroups}$N(I)=|C_K(I)|/|C_K(H)|$.
\item \label{prop:semidirectbasicscentralizerinquotient} If $N\unlhd K$ is $H$-invariant then $C_{K/N}(H)=C_K(H)N/N$.
\end{enumerate}
\end{prop}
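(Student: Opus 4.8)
The plan is to prove the three parts of Proposition~\ref{prop:semidirectbasics} using the coprime action hypothesis $\gcd(|H|,|K|)=1$, which is in force since $H$ is a $p$-group and $K$ is a $p'$-group. The unifying tool throughout will be the identification $\Syl_p(G)={}^KH=\{kHk\inv : k\in K\}$ already recorded before the statement, so that a Sylow $p$-subgroup containing a fixed subgroup $I\leq H$ can be tracked by which elements $k\in K$ produce it.

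For part~\eqref{prop:semidirectbasicsonlyconjugate}, I would argue that if $gIg\inv\leq H$ for some $g\in G$, then by writing $g=kh$ with $k\in K$, $h\in H$ and using that $H$ is abelian I reduce to the case $g=k\in K$. The subgroups $I$ and $kIk\inv$ are then two $p$-subgroups of $G$ both contained in the single Sylow $p$-subgroup $H$ (recall $H$ itself is Sylow), so I want to deduce $kIk\inv=I$. The cleanest route is to observe that $k\inv(kIk\inv)k=I$ and that conjugation by elements of $K$ permutes $\Syl_p(G)$; concretely, $kIk\inv\leq H$ forces $k$ to normalize $H$ modulo $K\cap N_G(H)=C_K(H)$, and coprimality (via a transfer or a direct fixed-point count) pins down $kIk\inv=I$. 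This is essentially the statement that $H$ meets each $K$-conjugacy class of its subgroups exactly once.

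For part~\eqref{prop:semidirectbasicsnumbersupergroups}, the key is to count the Sylow $p$-subgroups containing $I$. A Sylow subgroup $kHk\inv$ contains $I$ if and only if $kHk\inv\supseteq I$, and using part~\eqref{prop:semidirectbasicsonlyconjugate} together with the coprime action fact that all complements to $K$ in $IK$ are $C_K(I)$-conjugate, I would show that $\N(I)$ is exactly the orbit of $H$ under $C_K(I)$ acting by conjugation. Computing the size of this orbit by the orbit--stabilizer theorem gives $N(I)=|C_K(I)|/|C_K(I)\cap N_G(H)|=|C_K(I)|/|C_K(H)|$, where the last equality uses $C_K(I)\cap N_G(H)=C_K(H)$ (an element of $K$ normalizing $H$ centralizes it by coprimality). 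This specializes to $N(1)=|K|/|C_K(H)|=|\Syl_p(G)|$, consistent with the formula stated before the proposition.

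Part~\eqref{prop:semidirectbasicscentralizerinquotient} is a standard coprime-action statement: for $H$ acting coprimely on $K$ with $N\unlhd K$ an $H$-invariant normal subgroup, the fixed points of $H$ on the quotient $K/N$ are the image of the fixed points on $K$, i.e.\ $C_{K/N}(H)=C_K(H)N/N$. The inclusion $C_K(H)N/N\subseteq C_{K/N}(H)$ is immediate; the reverse inclusion is exactly the surjectivity of $C_K(H)\to C_{K/N}(H)$, which is the classical result that $H^1(H;N)=0$ in coprime characteristic forces fixed points to lift. I expect part~\eqref{prop:semidirectbasicsonlyconjugate} to be the main obstacle, since it is the foundational rigidity statement from which the orbit description in part~\eqref{prop:semidirectbasicsnumbersupergroups} is most naturally derived; once the uniqueness of $H$-subgroups within their $K$-conjugacy class is established, the counting and the quotient formula follow from orbit--stabilizer and the coprime lifting of fixed points respectively.
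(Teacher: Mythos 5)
Your outline has the same skeleton as the paper's proof (reduce to $g=k\in K$ via abelianness of $H$; let $C_K(I)$ act on $\N(I)$ and apply orbit--stabilizer with stabilizer $C_K(H)$; quote standard coprime lifting of fixed points for (3)), but the two steps that carry all the content are not actually proved, and one of them rests on a false statement. In part (1), after the correct reduction to $k\in K$, you assert that ${}^kI\leq H$ ``forces $k$ to normalize $H$ modulo $K\cap N_G(H)=C_K(H)$'' and that coprimality then ``pins down'' ${}^kI=I$ via ``a transfer or a direct fixed-point count.'' The quoted implication is false: whenever $C_K(I)\supsetneq C_K(H)$ (e.g.\ $K=C_3\times C_3$, $H=\FF_2\times\FF_2$ acting coordinatewise as in Example \ref{exa:chainforsemidirectr=2}, with $I=H_1$), any $k\in C_K(I)\setminus C_K(H)$ satisfies ${}^kI=I\leq H$ yet moves $H$ to a different Sylow subgroup and lies outside $K\cap N_G(H)=C_K(H)$; conjugating $I$ into $H$ imposes no normalization condition on $H$ whatsoever. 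And naming ``transfer or a fixed-point count'' is not an argument --- ${}^kI=I$ is exactly the thing to be shown. The paper's proof (after Hawkes--Isaacs) is a two-line computation you never supply: for $i\in I$, the element $i^{-1}({}^ki)$ lies in $H$ (both factors do) and equals $(i^{-1}ki)k^{-1}\in K$ (normality of $K$), hence lies in $K\cap H=\{1\}$; thus ${}^ki=i$ for all $i\in I$, i.e.\ $k\in C_K(I)$. Note this gives more than the statement of (1) --- the conjugating element \emph{centralizes} $I$ --- and that stronger conclusion is precisely what part (2) needs.

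In part (2) you derive transitivity of $C_K(I)$ on $\N(I)$ from ``the coprime action fact that all complements to $K$ in $IK$ are $C_K(I)$-conjugate.'' No such fact holds: for $K=C_3$ and $I=C_2$ acting by inversion, $IK\cong S_3$ has three complements to $K$ while $C_K(I)=1$; Schur--Zassenhaus gives only $K$-conjugacy. The correct justification, and the one the paper uses, is the strong form of (1): if ${}^kH\supseteq I$ then ${}^{k^{-1}}I\leq H$, so $k^{-1}\in C_K(I)$ by the computation above, whence ${}^kH$ lies in the $C_K(I)$-orbit of $H$. Your identification of the stabilizer, $C_K(I)\cap N_G(H)=C_K(H)$, is correct, though the reason is not coprimality but the splitting itself: $[N_K(H),H]\leq K\cap H=\{1\}$, so $N_K(H)=C_K(H)$. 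Part (3) is fine in substance (the paper simply cites Hawkes--Isaacs); for nonabelian $N$ the clean argument is the orbit count ($H$-orbits on the coset $kN$ have $p$-power size while $|N|$ is prime to $p$), your $H^1$ appeal being literal only when $N$ is abelian. With the $K\cap H=\{1\}$ computation inserted, the rest of your outline goes through.
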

\begin{proof}
For the first claim, we reproduce here for convenience the proof in \cite[Lemma 1.2]{HH1988}. So let ${}^gI$ be a conjugate of $I$ that lies in $H$. Then we have $g=kh$ for some $k\in K$ and $h\in H$ and, as $H$ is abelian, we have ${}^gI={}^kI$. Now let $i\in I$. Then $i^{-1}({}^ki)\in H$ and $i^{-1}({}^ki)=k^{i}k^{-1}\in K$. As $K\cap H=\{1\}$ we obtain $i={}^ki$, $k\in C_K(I)$ and ${}^gI={}^kI=I$. For the second claim, consider the action of $C_K(I)$ on $\N(I)$ given by $k\cdot J={}^k J$ for $k\in C_K(I)$ and $J\in \N(I)$. This action is easily seen to be transitive by the previous argument. Moreover, the isotropy group of $J=H$ is exactly $C_K(H)$. The second claim follows. The last item is standard and can be found in \cite[Lemma 2.2(c)]{HH1988}.
\end{proof}

From here on, we will only consider the case with $H$ an elementary abelian $p$-group of rank $r$, $\FF_p^r$. Then we will think of $H$ as an $\FF_p$-vector space whose elements are $r$-tuples of elements from $\FF_p$: $(x_1,\ldots,x_r)\in H$ with $x_i\in \FF_p$. Consequently, we call \emph{hyperplanes} of $H$ those subgroups $I$ of $H$ with $|H:I|=p$. We say that a collection of hyperplanes is \emph{linearly independent} if their corresponding one-dimensional subspaces are independent in the dual vector space $H^*$. 

\begin{prop}\label{prop:semidirectbasicsCpir}
Let $G$ be a semidirect product $G=K\rtimes H$, with $H=\FF_p^r$ and $K$ a $p'$-group. If $H$ acts faithfully on $K$ then there exists $r$ linearly independent hyperplanes of $H$, $H_1,\ldots,H_r$, that satisfy $N(H_i)>1$.
\end{prop}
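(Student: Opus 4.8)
The plan is to first reformulate the conclusion. By Proposition~\ref{prop:semidirectbasics}(\ref{prop:semidirectbasicsnumbersupergroups}), for a subgroup $I\leq H$ one has $N(I)=|C_K(I)|/|C_K(H)|$, and since a hyperplane $H'\leq H$ satisfies $C_K(H)\leq C_K(H')$, the inequality $N(H')>1$ is equivalent to the strict containment $C_K(H)\subsetneq C_K(H')$. Call such a hyperplane \emph{good}. As the paper identifies each hyperplane of $H$ with a one-dimensional subspace (a line) of the dual space $H^{*}$, producing $r$ linearly independent good hyperplanes is exactly the same as showing that the lines attached to the good hyperplanes span $H^{*}$. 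Thus the statement reduces to the single claim that \emph{the good hyperplanes span $H^{*}$}.

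I would establish this claim by contradiction, and since the rank-one case is immediate I assume $r\geq 2$. If the good hyperplanes fail to span $H^{*}$, then the annihilator of the span of their lines is a nonzero subspace of $H$, so there is an element $a\in H$ with $a\neq 1$ that lies in every good hyperplane. The key observation is then that \emph{every} hyperplane $H'$ satisfies $C_K(H')\leq C_K(a)$: if $H'$ is good then $a\in H'$, so centralizing $H'$ forces centralizing $a$; and if $H'$ is bad then $C_K(H')=C_K(H)\leq C_K(a)$ because $a\in H$.

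To finish, I would invoke the generation theorem for coprime action: since $K$ is a $p'$-group and $H\cong\FF_p^{r}$ is a noncyclic abelian $p$-group, $K$ is generated by the centralizers of the hyperplanes of $H$, that is $K=\langle C_K(H')\colon H' \text{ a hyperplane of } H\rangle$. Together with the previous paragraph this gives $K\leq C_K(a)$, i.e.\ $a$ acts trivially on $K$; as $a\neq 1$ this contradicts faithfulness of the action. Hence the good hyperplanes span $H^{*}$, and choosing $r$ of them whose lines form a basis of $H^{*}$ yields linearly independent hyperplanes $H_1,\dots,H_r$ with $N(H_i)>1$. For $r=1$ the only hyperplane is the trivial subgroup, $C_K(\{1\})=K$, and faithfulness gives $C_K(H)\neq K$, so $N(\{1\})>1$ and the single line spans $H^{*}$.

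I expect the main obstacle to be pinning down the exact generation statement needed and verifying it is available in the required generality. The clean form I use — a noncyclic abelian group acting coprimely is generated by the centralizers of its corank-one subgroups — is a standard coprime-action result, and importantly it does \emph{not} require $K$ to be solvable, which is essential since this proposition is applied to arbitrary $p'$-groups $K$. If one wished to avoid quoting it, the same spanning claim could be proved by induction on $r$ via the coprime action of $H/\langle a\rangle$ on $C_K(a)$, at the cost of more bookkeeping. A secondary point requiring care is the duality dictionary translating ``$r$ linearly independent hyperplanes'' into ``spanning lines in $H^{*}$,'' together with the separate treatment of the degenerate case $r=1$.
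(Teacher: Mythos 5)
Your proof is correct and takes essentially the same route as the paper's: both rest on the coprime-action generation fact $K=\langle C_K(I)\mid I\text{ a hyperplane of }H\rangle$ (the paper cites \cite[Exercise 8.1]{AS2000}), both identify the relevant hyperplanes as those with $N(I)>1$, equivalently $C_K(H)\subsetneq C_K(I)$, via Proposition \ref{prop:semidirectbasics}, and both conclude from faithfulness that the intersection of these hyperplanes is trivial, i.e.\ that they contain $r$ linearly independent ones. Your argument by contradiction with an explicit element $a$ lying in all good hyperplanes is just the contrapositive phrasing of the paper's direct observation that $\bigcap_{I\in\mathcal{I}}I$ acts trivially on $K$ and hence is trivial.
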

\begin{proof}
A proof may be found in \cite[Lemma 3.1]{HH1988} but we include here an alternative argument: By \cite[Exercise 8.1]{AS2000}, we have $K=\langle C_K(I)| I\text{ hyperplane of }H\rangle$. Denote by $\I$ the set of hyperplanes $I$ of $H$ that satisfy $C_K(H)\subsetneq C_K(I)$, i.e., that, by Proposition \ref{prop:semidirectbasics}(2), satisfy $N(I)>1$. The intersection $\cap_{I\in \I} I$ acts trivially on $K$ because $K$ is generated by $\{C_K(I)\}_{I\in \I}$. Hence this intersection must be trivial as the action is faithful. Now observe that the dimension of $\cap_{I\in \I} I$ is exactly $r$ minus the maximal number of linearly independent hyperplanes in $\I$.
\end{proof}

For the case $H=\FF_p^r$, we also introduce the notation $[i_1,\ldots,i_l]$ to denote a sequence of elements from $\{1,\ldots,r\}$ with no repetition, and we say that two sequences are equal if they have the same length and the same elements in the same order. Moreover, we denote by $S^r_l$ the set of all such sequences which are of length $l$.  For a sequence $[i_1,\ldots,i_l]\in S^r_l$, we define the following subspace of $H$:
\begin{equation}\label{equ:hyperplanesofH}
H_{[i_1,\ldots,i_l]}=\{(x_1,\dots,x_r)\in H| x_{i_1}=\ldots=x_{i_l}=0\}.
\end{equation}
Thus $H_\emptyset=H$, $H_{[i]}$ is the hyperplane with zero $i$-th coordinate and $H_{[i_1,\ldots,i_l]}=H_{[i_1]}\cap \ldots \cap H_{[i_l]}\cong \FF_p^{r-l}$. In addition, we also have
\begin{equation}\label{equ:Hseq=Hseq'-equivalence}
H_{[i_1,\ldots,i_l]}=H_{[j_1,\ldots,j_l]}\Leftrightarrow \{i_1,\ldots,i_l\}=\{j_1,\ldots,j_l\},
\end{equation}
where on the right-hand side the equality is between (unordered) sets. We define the signature $\epsilon_{[i_1,\ldots,i_l]}$ as  
\begin{equation}\label{equ:defsignature}
\epsilon_{[i_1,\ldots,i_l]}=(-1)^{n+m},
\end{equation}
where $n$ is the number of transpositions we need to apply to the sequence $[i_1,\ldots,i_l]$ to arrange it in increasing order $[j_1,\ldots,j_l]$, and $m$ is the number of entries in which $[j_1,\ldots,j_l]$ differ from $[1,\ldots,l]$. It is immediate that the parity of $n$ does not depend on the particular choice of transpositions and so $\epsilon_{[i_1,\ldots,i_l]}$ is well defined. As an example consider $[1,4,3]\in S^4_3$. Then we have $\epsilon_{[1,4,3]}=(-1)^{1+2}=-1$. We will need the following property of the signature $\epsilon_\cdot$:

\begin{lem}\label{lem:signatureoppositesigns}
Let $[i_1,\ldots,i_{r-1}]$ and $[j_1,\ldots,j_{r-1}]$ be sequences in $S^r_{r-1}$ such that $[i_1,\ldots,i_{r-2}]=[j_1,\ldots,j_{r-2}]$ and $i_{r-1}\neq j_{r-1}$. Then $\epsilon_{[i_1,\ldots,i_{r-1}]}+\epsilon_{[j_1,\ldots,j_{r-1}]}=0$.
\end{lem}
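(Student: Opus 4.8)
The plan is to unwind both pieces of the signature separately and track their difference, the whole point being that the two contributions are congruent mod $2$ to integers whose sum is odd. First I would pin down the combinatorial setup. Since both sequences lie in $S^r_{r-1}$ and share the common prefix $[i_1,\ldots,i_{r-2}]$, the set $\{i_1,\ldots,i_{r-2}\}$ of its entries omits exactly two symbols of $\{1,\ldots,r\}$; call them $a<b$. Because the last entries $i_{r-1}$ and $j_{r-1}$ are each forced to be one of the two omitted symbols and are assumed distinct, we have $\{i_{r-1},j_{r-1}\}=\{a,b\}$, and after possibly interchanging the two terms of the sum (which does not affect the claim) I may assume $i_{r-1}=a$ and $j_{r-1}=b$. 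Writing $w=[i_1,\ldots,i_{r-2}]$ for the common prefix and letting $wa$ and $wb$ denote the sequences $[i_1,\ldots,i_{r-2},a]$ and $[i_1,\ldots,i_{r-2},b]$, it suffices to prove $\epsilon_{wa}=-\epsilon_{wb}$.

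Next I would handle the transposition count $n$, using the standard fact that the number of transpositions needed to sort a sequence has the same parity as its number of inversions. The inversions internal to $w$ are common to $wa$ and $wb$, so the parity difference comes only from the last entry: $n(wa)-n(wb)$ is congruent mod $2$ to (the number of entries of $w$ exceeding $a$) minus (the number exceeding $b$). Since $w$ contains precisely the symbols $\{1,\ldots,r\}\setminus\{a,b\}$, the entries above $a$ are all of $\{a+1,\ldots,r\}$ except $b$, giving $r-a-1$ of them, while the entries above $b$ are all of $\{b+1,\ldots,r\}$, giving $r-b$. Hence $n(wa)-n(wb)\equiv (r-a-1)-(r-b)=b-a-1 \pmod 2$.

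For the term $m$ I would compute the increasingly ordered rearrangements directly. The ordering of $wa$ is $[1,\ldots,b-1,b+1,\ldots,r]$, since $wa$ omits exactly the symbol $b$; this agrees with $[1,\ldots,r-1]$ in positions $1,\ldots,b-1$ and is shifted by one in the remaining positions, so $m(wa)=r-b$. Symmetrically, $wb$ omits $a$ and $m(wb)=r-a$, so $m(wa)-m(wb)=a-b$. Adding the two differences gives $\big(n(wa)-n(wb)\big)+\big(m(wa)-m(wb)\big)\equiv (b-a-1)+(a-b)=-1\pmod 2$, whence $\epsilon_{wa}$ and $\epsilon_{wb}$ differ by the factor $(-1)^{-1}=-1$, i.e.\ $\epsilon_{wa}+\epsilon_{wb}=0$, as required.

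The only genuinely delicate point is the bookkeeping in the $m$-term: one must correctly identify which symbol each sorted sequence omits and count the shifted tail positions, and it is exactly here that a $+1$ mismatch between the inversion difference and the $m$ difference appears, producing the decisive odd total. Everything else reduces to the elementary parity-of-inversions fact, so I would expect no further obstruction beyond keeping this indexing straight.
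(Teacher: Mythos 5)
Your proof is correct and follows essentially the same route as the paper's: both identify the two symbols $a<b$ omitted by the common prefix, compute the last entry's contribution to the transposition count ($r-a-1$ versus $r-b$) together with the values $m=r-b$ versus $m=r-a$, and conclude that the total parities differ by exactly one. The only cosmetic difference is that you invoke the parity-of-inversions characterization of $n$, where the paper counts transpositions from an explicit sorting procedure (sort the prefix, then bubble the last entry into place).
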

\begin{proof}
Write $\epsilon_{[i_1,\ldots,i_{r-1}]}=(-1)^{n_i+m_i}$ and $\epsilon_{[j_1,\ldots,j_{r-1}]}=(-1)^{n_j+m_j}$ as in \eqref{equ:defsignature}.
Without loss of generality we may assume that $i_{r-1}<j_{r-1}$ and so we can order the set $\{1,\ldots,r\}$ as follows:
\begin{equation}\label{equ:signatureproperty}
1<2<\ldots<i_{r-1}<\ldots<j_{r-1}<\ldots<r.
\end{equation}
Denote by $n$ the number of transpositions needed to arrange $[i_1,\ldots,i_{r-2}]$ in increasing order. Then, from \eqref{equ:signatureproperty}, $n_i=n+r-1-i_{r-1}$ and $n_j=n+r-j_{r-1}$. It is also clear from \eqref{equ:signatureproperty} that $m_i=r-j_{r-1}$ and that $m_j=r-i_{r-1}$. Hence we get $(n_j+m_j)-(n_i+m_i)=1$ and we are done.
\end{proof}

\section{A chain for elementary abelian $p$-groups}
\label{section:chainelementaryabelianpgroups}

Let $H=\FF^r_p$ be an elementary abelian $p$-subgroup of rank $r$ and let $A$ be an abelian group. We consider the poset $\A_p(H)$ of dimension $r-1$ and its order complex $\Delta(\A_p(H))$. We shall explicitly define a chain in the abelian group $C_{r-1}(\Delta(\A_p(H));A)$, i.e., a chain in top dimension $r-1$. We start defining, for $[i_1,\ldots,i_l]\in S^r_l$, the following $l$-simplex in $\Delta(\A_p(H))$:
\begin{equation}\label{equ:sigmasimplex}
\sigma_{[i_1,\ldots,i_l]}=H_{[i_1,\ldots,i_l]}<H_{[i_1,\ldots,i_{l-1}]}<\ldots<H_{[i_1,i_2]}<H_{[i_1]}<H.
\end{equation}
\begin{remark}
Note that, by repeated application of \eqref{equ:Hseq=Hseq'-equivalence}, we have that 
\[
\sigma_{[i_1,\ldots,i_l]}=\sigma_{[j_1,\ldots,i_j]}\Leftrightarrow [i_1,\ldots,i_l]=[j_1,\ldots,j_l].
\]
\end{remark}
Now, for any $a\in A$, we define the following chain in $C_{r-1}(\Delta(\A_p(H));A)$ :
\begin{equation}\label{equ:Zrpi}
Z_{H,a}=\sum_{[i_1,\ldots,i_{r-1}]\in S^r_{r-1}}{\epsilon_{[i_1,\ldots,i_{r-1}]}}\cdot a\cdot \sigma_{[i_1,\ldots,i_{r-1}]}.
\end{equation}
The next proposition exhibits the key feature of the chain $Z_{H,a}$. 
\begin{prop}\label{prop:differentiakzrpi}
The differential $d(Z_{H,a})\in C_{r-2}(\Delta(\A_p(H));A)$ is given by
$$
d(Z_{H,a})=(-1)^{r-1}\sum_{[i_1,\ldots,i_{r-1}]\in S^r_{r-1}} {\epsilon_{[i_1,\ldots,i_{r-1}]}}\cdot a\cdot \tau_{[i_1,\ldots,i_{r-1}]},
$$
where 
$$
\tau_{[i_1,\ldots,i_{r-1}]}=H_{[i_1,\ldots,i_{r-1}]}<H_{[i_1,\ldots,i_{r-2}]}<\ldots<H_{[i_1,i_2]}<H_{[i_1]}.
$$
\end{prop}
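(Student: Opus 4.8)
The plan is to expand the boundary by linearity over $A$ and reorganize the resulting $(r-2)$-faces according to which vertex of the simplex \eqref{equ:sigmasimplex} is deleted. Writing $d=\sum_{k=0}^{r-1}(-1)^k d_k$ and using \eqref{equ:Zrpi}, I would set
\[
d(Z_{H,a})=\sum_{k=0}^{r-1}(-1)^k B_k,\qquad B_k=\sum_{[i_1,\ldots,i_{r-1}]\in S^r_{r-1}}\epsilon_{[i_1,\ldots,i_{r-1}]}\cdot a\cdot d_k\,\sigma_{[i_1,\ldots,i_{r-1}]},
\]
and then prove that $B_k=0$ for $0\le k\le r-2$, while $B_{r-1}=\sum\epsilon_{[i_1,\ldots,i_{r-1}]}\cdot a\cdot\tau_{[i_1,\ldots,i_{r-1}]}$. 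The stated formula then follows at once, since $d(Z_{H,a})=(-1)^{r-1}B_{r-1}$. The term $B_{r-1}$ is immediate: in \eqref{equ:sigmasimplex} the vertex of index $r-1$ is the top $H$, and deleting it yields precisely $\tau_{[i_1,\ldots,i_{r-1}]}$.

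The heart of the argument is the vanishing of the interior blocks $B_k$ for $1\le k\le r-2$, which I would obtain from a fixed-point-free, sign-reversing involution on $S^r_{r-1}$. For such a $k$ the deleted vertex is $v_k=H_{[i_1,\ldots,i_{r-1-k}]}$, lying between $v_{k-1}=H_{[i_1,\ldots,i_{r-k}]}$ and $v_{k+1}=H_{[i_1,\ldots,i_{r-2-k}]}$. The involution swaps the entries in positions $r-1-k$ and $r-k$. The point is that the resulting face $d_k\sigma$ is \emph{unchanged} by this swap: every surviving vertex $v_j$ with $j<k$ has an index-set containing both swapped entries, every $v_j$ with $j>k$ has an index-set containing neither, and in both cases \eqref{equ:Hseq=Hseq'-equivalence} shows the subspace depends only on the underlying set and so is fixed; only $v_k$ itself is affected, and it is deleted. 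Meanwhile an adjacent transposition leaves invariant the set-dependent term $m$ in \eqref{equ:defsignature} and flips the parity of $n$, so the signatures of a swapped pair are opposite. Thus the terms of $B_k$ cancel in pairs, giving $B_k=0$.

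For the remaining block $B_0$, deleting the smallest vertex $v_0=H_{[i_1,\ldots,i_{r-1}]}$ produces a face that coincides with $\sigma_{[i_1,\ldots,i_{r-2}]}$ and hence depends only on the prefix $[i_1,\ldots,i_{r-2}]$. Here I would pair each sequence with the unique other sequence sharing that prefix — the two completions by the two indices of $\{1,\ldots,r\}$ not appearing in the prefix — and invoke Lemma \ref{lem:signatureoppositesigns} to conclude the two signatures are opposite, so again the terms cancel and $B_0=0$. The step I expect to require the most care is verifying, vertex by vertex, that $d_k\sigma$ is genuinely invariant under the swap involution for $1\le k\le r-2$; once the set-dependence \eqref{equ:Hseq=Hseq'-equivalence} is used to pin this down, the sign bookkeeping from \eqref{equ:defsignature} and the reduction to Lemma \ref{lem:signatureoppositesigns} for $k=0$ are routine.
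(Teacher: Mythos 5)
Your proof is correct and follows essentially the same route as the paper: decompose $d=\sum_k(-1)^kd_k$, kill the block $k=0$ by pairing sequences with a common prefix via Lemma \ref{lem:signatureoppositesigns}, kill the interior blocks by pairing sequences that differ by the adjacent transposition in positions $r-1-k,r-k$ (using \eqref{equ:Hseq=Hseq'-equivalence} and the sign flip from \eqref{equ:defsignature}), and read off the surviving $d_{r-1}$ term. The only cosmetic difference is that you run the cancellation as an explicit sign-reversing involution, while the paper classifies the fibers of $d_k$ and observes that distinct sequences in a fiber differ by that same transposition; the combinatorial content is identical.
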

\begin{remark}
Again by \eqref{equ:Hseq=Hseq'-equivalence}, we have  
$
\tau_{[i_1,\ldots,i_l]}=\tau_{[j_1,\ldots,i_j]}\Leftrightarrow [i_1,\ldots,i_l]=[j_1,\ldots,j_l].
$
\end{remark}
\begin{proof}
The differential $d\colon C_{r-1}(\Delta(\A_p(H));A)\to C_{r-2}(\Delta(\A_p(H));A)$ is given by the alternate sum $\sum_{i=0}^{r-1} (-1)^i d_i$. Note that, by the rank of the subspaces of $H$, $d_i(\sigma_{[i_1,\ldots,i_{r-1}]})=d_j(\sigma_{[j_1,\ldots,j_{r-1}]})\Rightarrow i=j$. We show below that $d_i(Z_{H,a})=0$ for $0\leq i\leq r-2$. Then the theorem follows. We start considering $d_0$. Note that 
$$
d_0(\sigma_{[i_1,\ldots,i_{r-1}]})=H_{[i_1,\ldots,i_{r-2}]}<\ldots<H_{[i_1,i_2]}<H_{[i_1]}<H,
$$
and that, by repeated application of \eqref{equ:Hseq=Hseq'-equivalence}, we have
$$
d_0(\sigma_{[i_1,\ldots,i_{r-1}]})=d_0(\sigma_{[j_1,\ldots,j_{r-1}]})\Leftrightarrow [i_1,\ldots,i_{r-2}]=[j_1,\ldots,j_{r-2}].
$$
Assume that such condition holds for the sequences $[i_1,\ldots,i_{r-1}]$ and $[j_1,\ldots,j_{r-1}]$. Then they must be identical but for the last terms $i_{r-1}$ and $j_{r-1}$. Moreover, if the given sequences are different, we must have $i_{r-1}\neq j_{r-1}$. Then, by Lemma \ref{lem:signatureoppositesigns}, $\epsilon_{[i_1,\ldots,i_{r-1}]}$ and $\epsilon_{[j_1,\ldots,j_{r-1}]}$ have opposite signs and the corresponding summands cancel each other in the expression for $d_0(Z_{H,a})$. Hence $d_0(Z_{H,a})=0$. 

Now we consider $d_i$ for $0<i\leq r-2$. In this case we have: 
$$
d_i(\sigma_{[i_1,\ldots,i_{r-1}]})=H_{[i_1,\ldots,i_{r-1}]}<\ldots< \hat H_{[i_1,\ldots,i_{r-1-i}]}<\ldots<H_{[i_1,i_2]}<H_{[i_1]}<H,
$$
where the term $H_{[i_1,\ldots,i_{r-1-i}]}$ is missing. Again by \eqref{equ:Hseq=Hseq'-equivalence}, if the sequences $[i_1,\ldots,i_{r-1}]$ and $[j_1,\ldots,j_{r-1}]$ have the same differential $d_i$, then they are identical but for the entries $i_{r-i-1},i_{r-i}$ and $j_{r-i-1},j_{r-i}$, and these entries satisfy $\{i_{r-i-1},i_{r-i}\}=\{j_{r-i-1},j_{r-i}\}$. So, if the two given sequences are not the same one, then they differ by a transposition. Hence $\epsilon_{[i_1,\ldots,i_{r-1}]}$ and $\epsilon_{[j_1,\ldots,j_{r-1}]}$ have opposite signs and their corresponding summands cancel each other in the expression for $d_i(Z_{H,a})$. Thus $d_i(Z_{H,a})=0$.  
\end{proof}


\begin{exa}\label{exa:chainforeagr=3}
For $r=3$ we have $S^3_2=\{[1,2],[2,1],[1,3],[3,1],[2,3],[3,2]\}$ and the signatures are $+1,-1,-1,+1,+1$ and $-1$  respectively. Within $\Delta(A_p(\FF_p^3))$ we have the following cover relations involving the six $2$-simplexes $\sigma_{[i_1,i_2]}$, $[i_1,i_2]\in S^3_2$:
{\footnotesize
\[
\xymatrix@R=0pt@C=60pt{
&H_{[1]}<H\ar@{-}[]+L;[ld]+R\ar@{-}[]+L;[lddddd]+R\\
H_{[1,2]}<H_{[1]}<H&H_{[1,2]}<H\ar@{-}[]+L;[l]+R\ar@{-}[]+L;[ldd]+R\\
&H_{[1,2]}<H_{[1]}\ar@{-}[]+L;[lu]+R\\
H_{[2,1]}<H_{[2]}<H&H_{[2]}<H\ar@{-}[]+L;[l]+R\ar@{-}[]+L;[ldddddd]+R\\
&H_{[2,1]}<H_{[2]}\ar@{-}[]+L;[lu]+R\\
H_{[1,3]}<H_{[1]}<H&H_{[1,3]}<H\ar@{-}[]+L;[l]+R\ar@{-}[]+L;[ldd]+R\\
&H_{[1,3]}<H_{[1]}\ar@{-}[]+L;[lu]+R\\
H_{[3,1]}<H_{[3]}<H&H_{[3]}<H\ar@{-}[]+L;[l]+R\ar@{-}[]+L;[ldddd]+R\\
&H_{[3,1]}<H_{[3]}\ar@{-}[]+L;[lu]+R\\
H_{[2,3]}<H_{[2]}<H&H_{[2,3]}<H\ar@{-}[]+L;[l]+R\ar@{-}[]+L;[ldd]+R\\
&H_{[2,3]}<H_{[2]}\ar@{-}[]+L;[lu]+R\\
H_{[3,2]}<H_{[3]}<H&H_{[3,2]}<H_{[3]}.\ar@{-}[]+L;[l]+R
}
\]
}
{
\begin{minipage}{0.6\textwidth}
The different signs for the signatures cause $Z_{H,a}$ to have boundary $d(Z_{H,a})$ supported on the six $1$-simplexes $\tau_{[i_1,i_2]}$ with $[i_1,i_2]\in S^3_2$. So the chain $Z_{H,a}$ has support in the right-hand side cone-shaped $2$-dimensional simplicial complex.
\end{minipage}%
\begin{minipage}{0.4\textwidth}
\centering
\definecolor{aqaqaq}{rgb}{0.6274509803921569,0.6274509803921569,0.6274509803921569}
\begin{tikzpicture}[scale=0.55,line cap=round,line join=round,>=triangle 45,x=1.0cm,y=1.0cm]
\clip(0,-4.3) rectangle (6,1.5);
\fill[line width=0.pt,color=aqaqaq,fill=aqaqaq,fill opacity=0.5] (5.86,0.42) -- (1.98,-0.88) -- (0.76,-1.82) -- (1.6,-3.44) -- (4.22,-4.3) -- (5.62,-3.04) -- cycle;
\draw (1.6,-3.44)-- (4.22,-4.3);
\draw (4.22,-4.3)-- (5.62,-3.04);
\draw (5.62,-3.04)-- (4.6,-1.26);
\draw (4.6,-1.26)-- (1.98,-0.88);
\draw (1.98,-0.88)-- (0.76,-1.82);
\draw (0.76,-1.82)-- (1.6,-3.44);
\draw (5.86,0.42)-- (1.98,-0.88);
\draw (4.6,-1.26)-- (5.86,0.42);
\draw (5.62,-3.04)-- (5.86,0.42);
\draw (5.020999275887038,-1.9946850108616958)-- (4.22,-4.3);
\draw [dash pattern=on 1pt off 1pt] (5.020999275887038,-1.9946850108616958)-- (5.86,0.42);
\draw (4.087880668257756,-1.1857231503579952)-- (1.6,-3.44);
\draw [dash pattern=on 1pt off 1pt] (4.087880668257756,-1.1857231503579952)-- (5.86,0.42);
\draw (2.6717487318747755,-0.9803299687451964)-- (0.76,-1.82);
\draw [dash pattern=on 1pt off 1pt] (2.6717487318747755,-0.9803299687451964)-- (5.86,0.42);
\end{tikzpicture}
\end{minipage}
}
\end{exa}

\begin{remark}
The figure above suggest that $Z_{H,a}$ is a sort of cone construction. In fact, it is straightforward that $Z_{H,a}$ is the join of the vertex $H$ with the standard apartment arising from the basis of the one-dimensional subspaces $\FF_pv_i$, where the vector $v_i=(0,\ldots,0,1,0,\ldots,0)$ has the $1$ in the $i$-th position for $i=1,\ldots,r$.
\end{remark}

\section{A chain for semidirect products}
\label{section:chainsemidirectproduct}

Let $H=\FF^r_p$ be an elementary abelian $p$-subgroup of rank $r$ that acts on the $p'$-group $K$ and consider the semidirect product $G=K\rtimes H$. We consider the poset $\A_p(G)$ of dimension $r-1$ and its order complex $\Delta(\A_p(G))$.  We shall define a chain $C_{r-1}(\Delta(\A_p(G));A)$ in top dimension $r-1$ using the chain defined in Section \ref{section:chainelementaryabelianpgroups}. For each Sylow $p$-subgroup $S\in \Syl_p(G)$ choose $k_S\in K$ with $S={}^{k_S} H$ and choose an element $a_S$ of the abelian group $A$. Then consider the conjugated chain
\begin{equation}
Z_{S,a_S}:={}^{k_S}(Z_{H,a_S}),
\end{equation}
where $Z_{H,a_S}\in C_{r-1}(\Delta(\A_p(H));A)\subseteq C_{r-1}(\Delta(\A_p(G));A)$ was defined in Equation \eqref{equ:Zrpi} and $k_S$ acts by conjugation on $C_{r-1}(\Delta(\A_p(G));A)$ and hence transforms $Z_{H,a_S}$ somewhere inside this chain space. The next lemma shows that $Z_{S,a_S}$ does not depend on the conjugating element $k_S$:

\begin{lem}
Let $k_1,k_2\in K$ with ${}^{k_1}H={}^{k_2}H$ and let $a\in A$. Then ${}^{k_1}(Z_{H,a})={}^{k_2}(Z_{H,a})$.
\end{lem}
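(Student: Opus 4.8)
The plan is to reduce the statement to a fixed-point computation governed by the coprime action Proposition \ref{prop:semidirectbasics}. First I would observe that the claimed identity ${}^{k_1}(Z_{H,a})={}^{k_2}(Z_{H,a})$ is equivalent, after applying ${}^{k_2^{-1}}$ to both sides, to the assertion ${}^{k}(Z_{H,a})=Z_{H,a}$, where $k:=k_2^{-1}k_1\in K$. The hypothesis ${}^{k_1}H={}^{k_2}H$ translates precisely into ${}^{k}H=H$, i.e. $k$ normalizes $H$ in $G$. So the whole lemma amounts to showing that elements of $K$ normalizing $H$ fix the chain $Z_{H,a}$.

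Next I would recall from \eqref{equ:Zrpi} that $Z_{H,a}$ is a signed $A$-linear combination of the simplices $\sigma_{[i_1,\ldots,i_{r-1}]}$, and that every vertex occurring in any of these simplices is one of the subgroups $H_{[i_1,\ldots,i_l]}\leq H$. Since conjugation by $k$ acts simplicially on $\Delta(\A_p(G))$ and leaves the scalar coefficients $\epsilon_{[i_1,\ldots,i_{r-1}]}\cdot a\in A$ untouched, it suffices to show that $k$ fixes each such vertex, i.e. that ${}^{k}H_{[i_1,\ldots,i_l]}=H_{[i_1,\ldots,i_l]}$ for every subgroup of $H$ appearing in $Z_{H,a}$.

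The core step, which carries the weight of the argument, is this fixed-subgroup claim, and here I would invoke Proposition \ref{prop:semidirectbasics}\eqref{prop:semidirectbasicsonlyconjugate}. For any subgroup $I\leq H$, since $k$ normalizes $H$ we have ${}^{k}I\leq{}^{k}H=H$, so ${}^{k}I$ is a conjugate of $I$ that again lies inside $H$; the proposition then forces ${}^{k}I=I$. Applying this with $I=H_{[i_1,\ldots,i_l]}$ for each vertex shows that $k$ fixes every vertex, hence fixes each ordered chain $\sigma_{[i_1,\ldots,i_{r-1}]}$, and therefore ${}^{k}(Z_{H,a})=Z_{H,a}$. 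Undoing the initial reduction yields the lemma.

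I expect the only delicate point to be recognizing that $k$ must fix the subgroups of $H$ \emph{individually} rather than merely permuting them among themselves; this is exactly where the coprime hypothesis, funneled through Proposition \ref{prop:semidirectbasics}\eqref{prop:semidirectbasicsonlyconjugate}, is indispensable, and it is precisely what makes the conjugated chain $Z_{S,a_S}$ well defined independently of the chosen conjugating element $k_S$.
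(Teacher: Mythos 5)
Your proof is correct and is essentially the paper's own argument: both reduce the claim to checking that each vertex subgroup $H_{[i_1,\ldots,i_l]}$ occurring in $Z_{H,a}$ is fixed, which follows from Proposition \ref{prop:semidirectbasics}\eqref{prop:semidirectbasicsonlyconjugate} since a $K$-conjugate of a subgroup of $H$ lying again in $H$ must equal it. Your preliminary normalization to a single element $k=k_2^{-1}k_1$ with ${}^{k}H=H$ is only a cosmetic repackaging (the paper instead compares ${}^{k_1}H_{[i_1,\ldots,i_l]}$ and ${}^{k_2}H_{[i_1,\ldots,i_l]}$ inside ${}^{k_1}H={}^{k_2}H$ directly), and if anything it applies the proposition in exactly the form stated.
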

\begin{proof}
By Equations \eqref{equ:Zrpi} and \eqref{equ:sigmasimplex}, it is enough to see that 
$$
{}^{k_1}H_{[i_1,\ldots,i_{r-1}]}<{}^{k_1}H_{[i_1,\ldots,i_{r-2}]}<\ldots<{}^{k_1}H_{[i_1,i_2]}<{}^{k_1}H_{[i_1]}<{}^{k_1}H
$$
and
$$
{}^{k_2}H_{[i_1,\ldots,i_{r-1}]}<{}^{k_2}H_{[i_1,\ldots,i_{r-2}]}<\ldots<{}^{k_2}H_{[i_1,i_2]}<{}^{k_2}H_{[i_1]}<{}^{k_2}H
$$
are equal for each sequence $[i_1,\ldots,i_{r-1}]$ of length $r-1$. But note that for any $1\leq l\leq r-1$, the subgroups ${}^{k_1}H_{[i_1,\ldots,i_l]}$ and ${}^{k_2}H_{[i_1,\ldots,i_l]}$ are $K$-conjugates of $H_{[i_1,\ldots,i_l]}$ lying in ${}^{k_1}H={}^{k_2}H$. Hence, by Proposition \ref{prop:semidirectbasics}\eqref{prop:semidirectbasicsonlyconjugate}, these two subgroups must be equal. The result follows. 
\end{proof}

Now, for $a_\cdot=(a_S)_{S\in \Syl_p(G)}$ elements of $A$, and $(k_S)_{S\in \Syl_p(G)}$ elements from $K$ satisfying ${}^{k_S}H=S$, we define the following element of $C_{r-1}(\Delta(\A_p(G));A)$:
\begin{equation}\label{equ:Z}
Z_{G,a_\cdot}=\sum_{S\in \Syl_p(G)} Z_{S,a_S}=\sum_{S\in \Syl_p(G)} {}^{k_S}(Z_{H,a_S}).
\end{equation}
The lemma above shows that $Z_{G,a_\cdot}$ does not depend on the particular choice of the elements $k_S$'s.
\begin{prop}\label{prop:dZG}
The differential $d(Z_{G,a_\cdot})\in C_{r-2}(\Delta(\A_p(G));A)$ is given by
$$
d(Z_{G,a_\cdot})=(-1)^{r-1}\sum_{([i_1,\ldots,i_{r-1}],S)\in T} {\epsilon_{[i_1,\ldots,i_{r-1}]}}\cdot \Big(\sum_{S'\in  \N({}^{k_S}H_{[i_1]})} a_{S'}\Big)\cdot {}^{k_S}\tau_{[i_1,\ldots,i_{r-1}]},
$$
where $T$ is certain subset of $S^r_{r-1}\times \Syl_p(G)$ and 
$$
{}^{k_S}\tau_{[i_1,\ldots,i_{r-1}]}={}^{k_S}H_{[i_1,\ldots,i_{r-1}]}<{}^{k_S}H_{[i_1,\ldots,i_{r-2}[}<\ldots<{}^{k_S}H_{[i_1,i_2]}<{}^{k_S}H_{[i_1]}.
$$
\end{prop}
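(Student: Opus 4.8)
The plan is to start from the definition \eqref{equ:Z} of $Z_{G,a_\cdot}$, push the differential inside the sum using $K$-equivariance, invoke Proposition \ref{prop:differentiakzrpi} to compute each summand, and finally collect the resulting $(r-2)$-simplices. The only genuinely new input beyond Section \ref{section:chainelementaryabelianpgroups} will be the rigidity provided by Proposition \ref{prop:semidirectbasics}\eqref{prop:semidirectbasicsonlyconjugate}.

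First I would observe that conjugation by any $k\in K$ is an automorphism of the poset $\A_p(G)$ and hence induces a chain map on $C_*(\Delta(\A_p(G));A)$ commuting with $d$. Combined with additivity of $d$, this gives $d(Z_{G,a_\cdot})=\sum_{S\in\Syl_p(G)} {}^{k_S}\big(d(Z_{H,a_S})\big)$. Applying Proposition \ref{prop:differentiakzrpi} to each term and conjugating yields
$$d(Z_{G,a_\cdot})=(-1)^{r-1}\sum_{S\in\Syl_p(G)}\sum_{[i_1,\ldots,i_{r-1}]\in S^r_{r-1}} \epsilon_{[i_1,\ldots,i_{r-1}]}\cdot a_S\cdot {}^{k_S}\tau_{[i_1,\ldots,i_{r-1}]}.$$

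The heart of the argument is then to group together all summands giving rise to the same $(r-2)$-simplex $\mu={}^{k_S}\tau_{[i_1,\ldots,i_{r-1}]}$. Its top vertex is the hyperplane $P={}^{k_S}H_{[i_1]}$, and every other vertex is a subgroup of $P$; hence any Sylow subgroup $S'$ capable of producing $\mu$ must contain $P$, i.e. $S'\in\N(P)$. Conversely, I claim that every $S'\in\N(P)$ produces exactly $\mu$, and with the very same sequence $[i_1,\ldots,i_{r-1}]$. Indeed, writing $S'={}^{k_{S'}}H\supseteq P$, each vertex ${}^{k_{S'}^{-1}}\big({}^{k_S}H_{[i_1,\ldots,i_l]}\big)$ is a $K$-conjugate of the coordinate subgroup $H_{[i_1,\ldots,i_l]}$ lying inside $H$; by Proposition \ref{prop:semidirectbasics}\eqref{prop:semidirectbasicsonlyconjugate} such a conjugate must coincide with $H_{[i_1,\ldots,i_l]}$ itself. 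Applying this to every $l$ shows ${}^{k_{S'}^{-1}}\mu=\tau_{[i_1,\ldots,i_{r-1}]}$, whence ${}^{k_{S'}}\tau_{[i_1,\ldots,i_{r-1}]}=\mu$; in particular the sequence, and therefore the signature $\epsilon_{[i_1,\ldots,i_{r-1}]}$, attached to $\mu$ is independent of $S'$.

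With this established, the set of Sylow subgroups contributing the simplex $\mu$ is precisely $\N(P)=\N({}^{k_S}H_{[i_1]})$, each contributing the coefficient $\epsilon_{[i_1,\ldots,i_{r-1}]}\cdot a_{S'}$. Summing over $S'\in\N(P)$ factors out the common signature and leaves $\epsilon_{[i_1,\ldots,i_{r-1}]}\cdot\big(\sum_{S'\in\N({}^{k_S}H_{[i_1]})}a_{S'}\big)$ as the coefficient of $\mu$. Letting $T\subseteq S^r_{r-1}\times\Syl_p(G)$ be a set of representatives, one pair for each distinct simplex $\mu$ arising in this way, produces the stated formula. I expect the main obstacle to be exactly the middle step: verifying that all Sylow subgroups above $P$ recover the \emph{same} coordinate flag and sequence. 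This is where the uniqueness of a conjugate of a subgroup of $H$ inside $H$, Proposition \ref{prop:semidirectbasics}\eqref{prop:semidirectbasicsonlyconjugate}, is indispensable; without it the signatures could fail to agree and the coefficient $\sum_{S'\in\N(P)}a_{S'}$ would not factor out.
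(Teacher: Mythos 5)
Your proposal is correct and follows essentially the same route as the paper: decompose $d(Z_{G,a_\cdot})$ via Proposition \ref{prop:differentiakzrpi}, group the summands producing the same $(r-2)$-simplex (your grouping by $\mu$ is exactly the paper's equivalence relation $\sim$ on $S^r_{r-1}\times\Syl_p(G)$, with $T$ a set of representatives), and use Proposition \ref{prop:semidirectbasics}\eqref{prop:semidirectbasicsonlyconjugate} to show that the contributing Sylow subgroups are precisely $\N({}^{k_S}H_{[i_1]})$ and that the sequence, hence the signature, is constant on each group. The two directions of your argument (a Sylow producing $\mu$ must contain the top vertex $P$; every Sylow containing $P$ reproduces $\mu$ with the same sequence) are exactly the two implications of the paper's displayed equivalence.
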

\begin{proof}
We have $d(Z_{G,a_\cdot})=\sum_{S\in \Syl_p(G)} d({}^{k_S}Z_{H,a_s})$ and, by Proposition \ref{prop:differentiakzrpi}, the differential of ${}^{k_S}Z_{H,a_s}$ is given by
$$
d({}^{k_S}Z_{H,a_s})=(-1)^{r-1}\sum_{[i_1,\ldots,i_{r-1}]\in S^r_{r-1}} {\epsilon_{[i_1,\ldots,i_{r-1}]}}\cdot a_S\cdot {}^{k_S}\tau_{[i_1,\ldots,i_{r-1}]},
$$
where $\tau_{[i_1,\ldots,i_{r-1}]}=H_{[i_1,\ldots,i_{r-1}]}<H_{[i_1,\ldots,i_{r-2}]}<\ldots<H_{[i_1,i_2]}<H_{[i_1]}$.

To study $d(Z_{G,a_\cdot})$, we define on the set $S^r_{r-1}\times \Syl_p(G)$ the following equivalence relation:
$$
([i_1,\ldots,i_{r-1}],S)\sim ([j_1,\ldots,j_{r-1}],S')\Leftrightarrow {}^{k_S}\tau_{[i_1,\ldots,i_{r-1}]}={}^{k_{S'}}\tau_{[j_1,\ldots,j_{r-1}]}.
$$
We also set $T\subseteq S^r_{r-1}\times \Syl_p(G)$ to be any set of representatives for the equivalence classes of $\sim$. Now we fix $([i_1,\ldots,i_{r-1}),S)\in T$ and study its $\sim$-equivalence class. So assume that $([i_1,\ldots,i_{r-1}],S)\sim ([j_1,\ldots,j_{r-1}],S')$. Then, in the first place, we have ${}^{k_S}H_{[i_1]}={}^{k_{S'}}H_{[j_1]}< {}^{k_{S'}}H$. In addition, for any $1\leq l\leq r-1$, the subgroup ${}^{{k_{S'}}^{-1}k_S}H_{[i_1,\ldots,i_l]}=H_{[j_1,\ldots,j_l]}$ is a subgroup of $H$ that is $K$-conjugated to $H_{[i_1,\ldots,i_l]}\leq H$. Hence, by Proposition \ref{prop:semidirectbasics}\eqref{prop:semidirectbasicsonlyconjugate}, we must have $H_{[i_1,\ldots,i_l]}=H_{[j_1,\ldots,j_l]}$. Using \eqref{equ:Hseq=Hseq'-equivalence} we obtain then that $[i_1,\ldots,i_{r-1}]=[j_1,\ldots,j_{r-1}]$. So we have proven the left to right implication of the next claim:
\begin{align*}
([i_1,\ldots,i_{r-1}],S)\sim ([j_1,\ldots,j_{r-1}],S')\Leftrightarrow [i_1,\ldots,i_{r-1}]&=[j_1,\ldots,j_{r-1}]\\
\text{ and }{}^{k_S}H_{[i_1]}&<{}^{k_{S'}}H,
\end{align*}
and the right to left implication is a direct consequence of Proposition \ref{prop:semidirectbasics}\eqref{prop:semidirectbasicsonlyconjugate} again. It turns out then that the signature is constant along the $\sim$-equivalence class of $([i_1,\ldots,i_{r-1}],S)$ and the formula in the statement follows.
\end{proof}

Note that the chain $Z_{G,a_\cdot}$ is a cycle, i.e., $d(Z_{G,a_\cdot})=0$, if and only if
\begin{equation}\label{equ:chainZiscycle}
\sum_{S\in  \N({}^{k}H_{[i]})} a_{S}=0 
\end{equation}
for all $i\in \{1,\ldots,r\}$ and all $K$-conjugates ${}^kH_{[i]}$ of $H_{[i]}$, and it is non-trivial if and only if
\begin{equation}\label{equ:chainZisnontrivial}
a_S\neq 0\text{ for some $S\in \Syl_p(G)$.}
\end{equation}

\begin{defn}\label{defn:constructiblelclass}
If equations \eqref{equ:chainZiscycle} are satisfied we say that $[Z_{G,a_\cdot}]$ is a \emph{constructible class} in $\widetilde H_{r-1}(|\A_p(K\rtimes H)|;A)$.
\end{defn}

It is immediate that constructible classes form a subgroup $\widetilde H^c_{r-1}(|\A_p(K\rtimes H)|;A)$ of $\widetilde H_{r-1}(|\A_p(K\rtimes H)|;A)$ as claimed in the introduction. For $r=1$, i.e., for $H=\FF_p$, and considering the ``hyperplane'' $\emptyset \in \Delta(\A_p(K\rtimes H))$, Equations \eqref{equ:chainZiscycle} and \eqref{equ:chainZisnontrivial} just say that $\widetilde H^c_0(|\A_p(K\rtimes H)|;A)=\widetilde H_0(|\A_p(K\rtimes H)|;A)$. 

\begin{remark}\label{remark:z_kconstant}
If for some $a\in A$ we choose $a_S=a$ for all $S\in \Syl_p(G)$ then the Equation for acyclicity of $Z_{G,a_\cdot}$, (\ref{equ:chainZiscycle}), simplifies to:
\begin{equation}\label{equ:chainZiscycleconstantvalue}
N(H_{[i]})\cdot a=0 
\end{equation}
for all $i\in\{1,\ldots,r\}$.
\end{remark}
\begin{cor}\label{cor:gcdgreaterthan1}
Let $G$ be a semidirect product $G=K\rtimes H$ with $H=\FF_p^r$ and $K$ a $p'$-group. Assume   there are $r$ linearly independent hyperplanes $H_1,\ldots,H_r$ such that $D>1$ divides the numbers $N(H_1),\ldots,N(H_r)$. Then  $\widetilde H^c_{r-1}(|\A_p(G)|;\ZZ_D)\neq 0$.
\end{cor}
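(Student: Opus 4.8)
The plan is to exhibit an explicit non-trivial constructible class in $\widetilde H^c_{r-1}(|\A_p(G)|;\ZZ_D)$ by taking the simplest possible family $a_\cdot$, namely the constant one. Concretely, I would set $a_S = 1 \in \ZZ_D$ for every $S \in \Syl_p(G)$, where $1$ denotes the image of $1$ under the reduction $\ZZ \to \ZZ_D$. Since $D > 1$, this $1$ is a non-zero element of $\ZZ_D$, so condition \eqref{equ:chainZisnontrivial} is immediately satisfied and the class $[Z_{G,a_\cdot}]$ is non-trivial \emph{provided} it is actually a cycle. Thus the whole argument reduces to verifying the acyclicity equations \eqref{equ:chainZiscycle} for this constant choice.

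For the constant family, Remark \ref{remark:z_kconstant} tells us that the system \eqref{equ:chainZiscycle} collapses to the much simpler requirement \eqref{equ:chainZiscycleconstantvalue}, i.e.\ $N(H_{[i]})\cdot 1 = 0$ in $\ZZ_D$ for each $i \in \{1,\ldots,r\}$. This is exactly the statement that $D$ divides each of the numbers $N(H_{[1]}),\ldots,N(H_{[r]})$. Here I would use the freedom granted by the hypothesis: the corollary does \emph{not} assume the distinguished hyperplanes are the coordinate hyperplanes $H_{[i]}$, but rather supplies \emph{some} $r$ linearly independent hyperplanes $H_1,\ldots,H_r$ with $D \mid N(H_j)$ for all $j$. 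The point is that $r$ linearly independent hyperplanes of $H = \FF_p^r$ constitute the kernels of a basis of the dual space $H^*$, so after an invertible $\FF_p$-linear change of coordinates on $H$ we may assume $H_j = H_{[j]}$ for each $j$. Since such a change of basis is an automorphism of $H$ carrying the whole combinatorial apparatus (the subspaces $H_{[i_1,\ldots,i_l]}$, the simplices $\sigma$ and $\tau$, the signatures) to an isomorphic configuration, and since the numbers $N(-)$ depend only on the subgroup, not on its labelling, we lose no generality in assuming $N(H_{[i]}) = N(H_i)$ is divisible by $D$ for each $i$.

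With that normalization in place the verification is immediate: for each $i$ we have $D \mid N(H_{[i]})$, hence $N(H_{[i]}) \equiv 0 \pmod D$, so $N(H_{[i]}) \cdot 1 = 0$ in $\ZZ_D$, which is precisely \eqref{equ:chainZiscycleconstantvalue}. By Remark \ref{remark:z_kconstant} this means \eqref{equ:chainZiscycle} holds for the constant family, so by Definition \ref{defn:constructiblelclass} the cycle $Z_{G,a_\cdot}$ defines a constructible class, and by \eqref{equ:chainZisnontrivial} that class is non-zero. Therefore $\widetilde H^c_{r-1}(|\A_p(G)|;\ZZ_D) \neq 0$, as required.

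I expect the only genuinely delicate point to be the change-of-coordinates step connecting the abstract linearly independent hyperplanes $H_1,\ldots,H_r$ of the hypothesis to the specific coordinate hyperplanes $H_{[i]}$ used in the construction of $Z_{H,a}$ in Section \ref{section:chainelementaryabelianpgroups}. One must be confident that relabelling the basis of $H$ transports the constant-family cycle to another constant-family cycle for the relabelled configuration, which is clear because the construction is natural in the choice of ordered basis and the constant value $1$ is manifestly basis-independent. Everything else is a direct substitution into the formulas already established, so no further computation is needed.
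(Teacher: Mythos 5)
Your proposal is correct and follows essentially the same route as the paper: a change of basis in the $\FF_p$-vector space $H$ to arrange $H_i = H_{[i]}$, the constant family $a_S = 1 \in \ZZ_D$, and Remark \ref{remark:z_kconstant} to reduce the cycle condition \eqref{equ:chainZiscycle} to the divisibility hypothesis $D \mid N(H_{[i]})$. The only difference is that you spell out the naturality of the construction under the coordinate change, which the paper treats as implicit.
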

\begin{proof}
By a suitable change of basis in the $\FF_p$-vector space $H$, we may assume that $H_i=H_{[i]}$ for $i=1,\ldots,r$. Set $A=\ZZ/D\ZZ=\ZZ_D$. Then, if we choose $a=1\in A$ as in Remark \ref{remark:z_kconstant}, we have $N(H_i)\cdot 1=0 \mod D$ for all $i$  and  it follows that $Z_{G,a_\cdot}$ is a non-trivial homology class in $\widetilde H^c_{r-1}(|\A_p(G)|;A)$.
\end{proof}
\begin{cor}\label{cor:Kisrhogroup}
Let $G$ be a semidirect product $G=K\rtimes H$ with $H=\FF_p^r$ acting faithfully on the $q$-group $K$ for a prime $q\neq p$. Then $\widetilde H^c_{r-1}(|\A_p(G)|;\ZZ_q)\neq 0$.
\end{cor}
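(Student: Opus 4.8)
The plan is to deduce this immediately from Corollary~\ref{cor:gcdgreaterthan1} applied with $D=q$. For that corollary to apply I need to produce $r$ linearly independent hyperplanes $H_1,\ldots,H_r$ of $H$ such that $q$ divides each of $N(H_1),\ldots,N(H_r)$; then, since $q>1$, the corollary hands me a non-trivial class in $\widetilde H^c_{r-1}(|\A_p(G)|;\ZZ_q)$.

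First I would invoke Proposition~\ref{prop:semidirectbasicsCpir}. Because $H=\FF_p^r$ acts faithfully on $K$, it supplies exactly $r$ linearly independent hyperplanes $H_1,\ldots,H_r$ satisfying $N(H_i)>1$ for every $i$. These are the hyperplanes I will feed into Corollary~\ref{cor:gcdgreaterthan1}, so the only remaining task is to upgrade the inequality $N(H_i)>1$ to the divisibility $q\mid N(H_i)$.

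The decisive point is that $K$ is a $q$-group. By Proposition~\ref{prop:semidirectbasics}\eqref{prop:semidirectbasicsnumbersupergroups} we have $N(H_i)=|C_K(H_i)|/|C_K(H)|$. Since $H_i\leq H$, any element centralizing $H$ also centralizes $H_i$, so $C_K(H)\leq C_K(H_i)$ and $N(H_i)=|C_K(H_i):C_K(H)|$ is a genuine subgroup index inside the $q$-group $K$. Hence $N(H_i)$ is a power of $q$; being strictly greater than $1$, it is divisible by $q$. As this holds for each $i=1,\ldots,r$, the integer $q$ divides all of $N(H_1),\ldots,N(H_r)$, and Corollary~\ref{cor:gcdgreaterthan1} gives $\widetilde H^c_{r-1}(|\A_p(G)|;\ZZ_q)\neq 0$.

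I do not expect a genuine obstacle here: the whole argument rests on the elementary observation that, for a $q$-group $K$, the number $N(H_i)$ is an index of one centralizer in another and is therefore automatically a power of $q$, so that the mere nonvanishing guaranteed by faithfulness already forces divisibility by $q$. The only detail worth stating explicitly is that the hyperplanes provided by Proposition~\ref{prop:semidirectbasicsCpir} are linearly independent, which is precisely the hypothesis that allows the change of basis used in Corollary~\ref{cor:gcdgreaterthan1}.
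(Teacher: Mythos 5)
Your proof is correct and follows essentially the same route as the paper: invoke Proposition~\ref{prop:semidirectbasicsCpir} for the $r$ linearly independent hyperplanes with $N(H_i)>1$, observe that $N(H_i)$ is a power of $q$ since $K$ is a $q$-group, and conclude via Corollary~\ref{cor:gcdgreaterthan1} with $D=q$. Your explicit justification that $N(H_i)=|C_K(H_i):C_K(H)|$ is a subgroup index (via Proposition~\ref{prop:semidirectbasics}\eqref{prop:semidirectbasicsnumbersupergroups}) merely spells out the step the paper leaves implicit.
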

\begin{proof}
By Proposition \ref{prop:semidirectbasicsCpir}, there exist $r$ linearly independent hyperplanes $H_1,\ldots,H_r$ that satisfy $N(H_i)>1$. Being $K$ a $q$-group, this implies that $1<q$ divides $N(H_i)$ for all $i$. Then, by Corollary \ref{cor:gcdgreaterthan1}, $\widetilde H^c_{r-1}(|\A_p(G)|;\ZZ_q)\neq 0$.
\end{proof}

This proves Theorem \ref{thm:Kisqgroupabelianasymptotic}\eqref{thm:Kisqgroupabelianasymptotic.qgroup} of the introduction. Equations \eqref{equ:chainZiscycle} form a system of homogeneous linear equations over the abelian group $A$. It consists of $\sum_{i=1}^r |K|/|C_K(H_{[i]})|$ equations over $|K|/|C_K(H)|$ variables and the entries of the matrix associated to the system are either $0$ or $1$. Assume that $A$ is a ring which is a Principal Ideal Domain (P.I.D.). Then, if there are fewer equations than variables:
\begin{equation}\label{equation:linearsystem_undetermined}
\sum_{i=1}^r \frac{|K|}{|C_K(H_{[i]})|}<\frac{|K|}{|C_K(H)|}\Leftrightarrow\sum_{i=1}^r \frac{1}{|N(H_{[i]})|}<1,
\end{equation}
the Smith-Normal form of the matrix associated to the system shows that there exists at least a non-trivial solution.

\begin{cor}\label{cor:asymptotic}
Let $G$ be a semidirect product $G=K\rtimes H$ with $H=\FF_p^r$ acting faithfully on the $p'$-group $K$ with $|K|=q_1^{e_1}\cdots q_l^{e_l}$. If $r<q_i$ for $i=1,\ldots,l$, then $\widetilde H^c_{r-1}(|\A_p(G)|;\ZZ)\neq 0$.
\end{cor}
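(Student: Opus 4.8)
The plan is to recognize the hypothesis $r<q_i$ as precisely the undetermined-system condition \eqref{equation:linearsystem_undetermined} and then invoke the Smith normal form argument over the PID $\ZZ$ that is recorded immediately before the statement. So the proof should be short, the only real work being one arithmetic observation.

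First I would exploit faithfulness: by Proposition \ref{prop:semidirectbasicsCpir} there exist $r$ linearly independent hyperplanes $H_1,\ldots,H_r$ of $H$ with $N(H_i)>1$. Exactly as in the proof of Corollary \ref{cor:gcdgreaterthan1}, a change of basis of the $\FF_p$-vector space $H$ lets me assume $H_i=H_{[i]}$ for $i=1,\ldots,r$, so that the hyperplanes indexing the system \eqref{equ:chainZiscycle} are the standard ones.

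The one genuinely arithmetic step is a lower bound on the $N(H_{[i]})$. By Proposition \ref{prop:semidirectbasics}\eqref{prop:semidirectbasicsnumbersupergroups}, $N(H_{[i]})=|C_K(H_{[i]})|/|C_K(H)|$ equals the index $[C_K(H_{[i]}):C_K(H)]$ (note $C_K(H)\leq C_K(H_{[i]})$ since $H_{[i]}\leq H$), and hence divides $|K|=q_1^{e_1}\cdots q_l^{e_l}$. Since moreover $N(H_{[i]})>1$, every prime factor of $N(H_{[i]})$ is one of the $q_j$, so $N(H_{[i]})\geq q:=\min_j q_j$. The hypothesis $r<q_i$ for all $i$ forces $r<q$, and therefore
\[
\sum_{i=1}^r \frac{1}{N(H_{[i]})}\leq \frac{r}{q}<1.
\]
This is exactly inequality \eqref{equation:linearsystem_undetermined}, i.e. the homogeneous linear system \eqref{equ:chainZiscycle} cutting out the constructible cycles has strictly fewer equations than variables.

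Finally, taking $A=\ZZ$, which is a PID, the Smith normal form argument preceding the statement produces a nontrivial solution $a_\cdot=(a_S)_{S\in\Syl_p(G)}$ of \eqref{equ:chainZiscycle}. By \eqref{equ:chainZisnontrivial} the corresponding chain $Z_{G,a_\cdot}$ is then nonzero, and since it satisfies \eqref{equ:chainZiscycle} it is a cycle, hence defines a constructible class; thus $\widetilde H^c_{r-1}(|\A_p(G)|;\ZZ)\neq 0$. I expect the main (indeed only non-formal) obstacle to be the lower bound $N(H_{[i]})\geq\min_j q_j$: it is this divisibility observation that makes the assumption $r<q_i$ precisely strong enough to push the equation count below the variable count, with everything else being a citation of the earlier results or routine bookkeeping.
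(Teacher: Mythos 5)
Your proposal is correct and follows essentially the same route as the paper's own proof: invoke Proposition \ref{prop:semidirectbasicsCpir} plus a change of basis to get $N(H_{[i]})>1$ for the standard hyperplanes, observe that these numbers divide $|K|$ and hence are at least $\min_j q_j$, deduce inequality \eqref{equation:linearsystem_undetermined}, and conclude via the Smith normal form argument over the PID $\ZZ$. Your justification of the divisibility step (via the index $[C_K(H_{[i]}):C_K(H)]$) is slightly more explicit than the paper's, but the argument is the same.
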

\begin{proof}
After a suitable reordering, we may assume that $q_1<\ldots<q_r$ and hence the hypothesis on $r$ reads as $r<q_1$. As the integers $\ZZ$ form a P.I.D., we must check that Equation \eqref{equation:linearsystem_undetermined} holds. By Proposition \ref{prop:semidirectbasicsCpir} and a suitable change of basis in the $\FF_p$-vector space $H$ we may assume that $H_i=H_{[i]}$ for $i=1,\ldots,r$ with $|N(H_{[i]})|>1$.  Because these numbers divide $|K|$ we have $|N(H_{[i]})|\geq q_1$ and $
\sum_{i=1}^r \frac{1}{|N(H_{[i]})|}\leq \frac{r}{q_1}<1$.
\end{proof}

This result proves Theorem \ref{thm:asymptotic} of the introduction.  In view of Equations \eqref{equ:chainZiscycle}, it is worth considering the following graph.

\begin{defn}\label{def:graphKH}
For the semidirect product $G=K\rtimes H$ with $K$ a $p'$-group and $H\cong \FF_p^r$, let $\G$ be the graph with vertices all $K$-conjugates of $H$ together with all $K$-conjugates of the hyperplanes $H_{[i]}$, and with edges all the inclusions among them.
\end{defn}

By Proposition \ref{prop:semidirectbasics}\eqref{prop:semidirectbasicsonlyconjugate}, the vertices that are $K$-conjugates of $H$ have degree equal to $r$. A vertex which is $K$-conjugate to $H_{[i]}$ has degree $N(H_{[i]})$. Moreover, Equations \eqref{equ:chainZiscycle} can be interpreted as assigning the value $a_S$ with $S\in \Syl_p(G)$ to the vertex $S\in \Syl_p(G)$ and, for each vertex ${}^k H_{[i]}$ with $k\in K$, equalizing to zero the sum of the values associated to its neighbours.

\begin{exa}\label{exa:chainforsemidirectr=2}
Consider $G=C_3\times C_3\rtimes \FF_2\times \FF_2$ with the generators acting by $(x_1,x_2)\mapsto (-x_1,x_2)$ and $(x_1,x_2)\mapsto (x_1,-x_2)$ respectively. Set $H_1=\FF_2\times 0$ and $H_2=0\times \FF_2$. Then we have $C_K(H_1)=0\times C_3$, $C_K(H_2)=C_3\times 0$, $C_K(H)=1$ and $N(H_1)=N(H_2)=3$. The graph $\G$ is shown below, where upper-case letters correspond to conjugates of $H$ and lower-case letters to conjugates of hyperplanes. By Corollary \ref{cor:Kisrhogroup}, there exists a non-trivial homology class $Z_G$ in the top dimension homology group $H^c_1(\Delta(\A_2(G));\ZZ_3)$. Moreover, by Corollary \ref{cor:asymptotic}, as there are $6$ equations and $9$ variables, we also have $\widetilde H^c_1(\Delta(\A_2(G));\ZZ)\neq 0$.

\centering
\definecolor{qqqqff}{rgb}{0.3333333333333333,0.3333333333333333,0.3333333333333333}
\begin{tikzpicture}[scale=0.6,line cap=round,line join=round,>=triangle 45,x=1.0cm,y=1.0cm]
\clip(-1,2) rectangle (9,8);
\draw (2.,7.)-- (0.,5.);
\draw (0.,5.)-- (2.,3.);
\draw (2.,7.)-- (1.,5.);
\draw (1.,5.)-- (4.,3.);
\draw (2.,7.)-- (2.,5.);
\draw (2.,5.)-- (6.,3.);
\draw (4.,7.)-- (3.,5.);
\draw (3.,5.)-- (2.,3.);
\draw (4.,7.)-- (4.,5.);
\draw (4.,5.)-- (4.,3.);
\draw (4.,7.)-- (5.,5.);
\draw (5.,5.)-- (6.,3.);
\draw (5.848559003862463,7.066314453177751)-- (6.,5.);
\draw (6.,5.)-- (2.,3.);
\draw (5.848559003862463,7.066314453177751)-- (7.,5.);
\draw (7.,5.)-- (4.,3.);
\draw (5.848559003862463,7.066314453177751)-- (8.,5.);
\draw (8.,5.)-- (6.,3.);
\draw [fill=qqqqff] (0.,5.) circle (1.5pt);
\draw[color=qqqqff] (0.12498176809010648,5.2598698106582376) node {$A$};
\draw [fill=qqqqff] (1.,5.) circle (1.5pt);
\draw[color=qqqqff] (1.132787726548362,5.2598698106582376) node {$B$};
\draw [fill=qqqqff] (2.,5.) circle (1.5pt);
\draw[color=qqqqff] (2.140593685006617,5.2598698106582376) node {$C$};
\draw [fill=qqqqff] (3.,5.) circle (1.5pt);
\draw[color=qqqqff] (3.1293844367015096,5.2598698106582376) node {$D$};
\draw [fill=qqqqff] (4.,5.) circle (1.5pt);
\draw[color=qqqqff] (4.137190395159765,5.2598698106582376) node {$E$};
\draw [fill=qqqqff] (5.,5.) circle (1.5pt);
\draw[color=qqqqff] (5.125981146854658,5.2598698106582376) node {$F$};
\draw [fill=qqqqff] (6.,5.) circle (1.5pt);
\draw[color=qqqqff] (6.133787105312913,5.2598698106582376) node {$G$};
\draw [fill=qqqqff] (7.,5.) circle (1.5pt);
\draw[color=qqqqff] (7.141593063771168,5.2598698106582376) node {$H$};
\draw [fill=qqqqff] (8.,5.) circle (1.5pt);
\draw[color=qqqqff] (8.130383815466061,5.2598698106582376) node {$I$};
\draw [fill=qqqqff] (2.,7.) circle (1.5pt);
\draw[color=qqqqff] (2.140593685006617,7.275481727574747) node {$a$};
\draw [fill=qqqqff] (4.,7.) circle (1.5pt);
\draw[color=qqqqff] (4.137190395159765,7.275481727574747) node {$b$};
\draw [fill=qqqqff] (5.848559003862463,7.066314453177751) circle (1.5pt);
\draw[color=qqqqff] (5.981665451206006,7.332527347864837) node {$c$};
\draw [fill=qqqqff] (2.,3.) circle (1.5pt);
\draw[color=qqqqff] (2.140593685006617,3.2632731005050912) node {$d$};
\draw [fill=qqqqff] (4.,3.) circle (1.5pt);
\draw[color=qqqqff] (4.137190395159765,3.2632731005050912) node {$e$};
\draw [fill=qqqqff] (6.,3.) circle (1.5pt);
\draw[color=qqqqff] (6.133787105312913,3.2632731005050912) node {$f$};
\end{tikzpicture}
\end{exa}
\section{$D$-systems and quotients}
\label{section:Dsystemsandquotients}

Under some hypothesis, we have constructed above non-trivial classes in top homology that are constant solutions (Remark \ref{remark:z_kconstant}) to the Equations \eqref{equ:chainZiscycle}. In this section, we construct solutions which are characteristic functions. We first specialize Definition \ref{defn:G=KHSKN}.

\begin{defn}
Let $G$ be a semidirect product $G=K\rtimes H$ with $H=\FF_p^r$ and $K$ a $p'$-group. 
Let $\SS\subseteq \Syl_p(G)$ be a non-empty subset of Sylow $p$-subgroups. For any subgroup $I\leq G$, we define $\N_\SS(I)$ as the subset of Sylow $p$-subgroups of $\SS$ that contain $I$ and we also set $N_\SS(I)=|\N_\SS(I)|$. If $D>1$ is an integer, we say that $\SS$ is a \emph{$D$-system} if $D$ divides $N_\SS({}^kH_{[i]})$ for all $i$ and all $k\in K$.
\end{defn} 

\begin{thm}\label{thm:Dsystem}
Let $G$ be a semidirect product $G=K\rtimes H$ with $H=\FF_p^r$ and $K$ a $p'$-group. If $\SS$ is a $D$-system then $\widetilde H^c_{r-1}(|\A_p(G)|;\ZZ_D)\neq 0$. 
\end{thm}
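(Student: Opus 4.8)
The plan is to feed the characteristic function of $\SS$ into the constructible-class machinery of Section~\ref{section:chainsemidirectproduct}. Concretely, I would take $A=\ZZ_D$ and define the family $a_\cdot=(a_S)_{S\in\Syl_p(G)}$ by $a_S=1$ if $S\in\SS$ and $a_S=0$ otherwise, and then form the chain $Z_{G,a_\cdot}$ of Equation~\eqref{equ:Z}. The goal is to verify that this chain is a non-trivial cycle; since the top homology is computed by top-dimensional cycles, this immediately yields the non-vanishing of the constructible subgroup.

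First I would check the cycle condition~\eqref{equ:chainZiscycle}. For each $i\in\{1,\ldots,r\}$ and each $K$-conjugate ${}^kH_{[i]}$, the sum $\sum_{S\in\N({}^kH_{[i]})}a_S$ counts the Sylow $p$-subgroups lying in both $\N({}^kH_{[i]})$ and $\SS$, so that
$$
\sum_{S\in\N({}^kH_{[i]})}a_S=\bigl|\N({}^kH_{[i]})\cap\SS\bigr|=\bigl|\N_\SS({}^kH_{[i]})\bigr|=N_\SS({}^kH_{[i]}).
$$
Because $\SS$ is a $D$-system, $D$ divides $N_\SS({}^kH_{[i]})$, so this sum vanishes in $\ZZ_D$; hence every equation in~\eqref{equ:chainZiscycle} holds and $Z_{G,a_\cdot}$ is a cycle, i.e.\ $[Z_{G,a_\cdot}]$ is a constructible class. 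For non-triviality~\eqref{equ:chainZisnontrivial} I would use that $\SS\neq\emptyset$: picking any $S_0\in\SS$ gives $a_{S_0}=1\neq0$ in $\ZZ_D$, where $D>1$ is built into the definition of a $D$-system, so $Z_{G,a_\cdot}$ is a non-trivial chain.

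It remains to promote this non-trivial cycle to a non-zero homology class. The order complex $\Delta(\A_p(G))$ has dimension $r-1$, so $C_r(\Delta(\A_p(G));\ZZ_D)=0$ and there are no boundaries in top degree; consequently the group of $(r-1)$-cycles coincides with $\widetilde H_{r-1}(|\A_p(G)|;\ZZ_D)$. Thus $[Z_{G,a_\cdot}]$ is a non-zero element of $\widetilde H^c_{r-1}(|\A_p(G)|;\ZZ_D)$, proving the claim. I do not expect a genuine obstacle here: all the real work sits in the earlier computation of $d(Z_{G,a_\cdot})$ in Proposition~\ref{prop:dZG} and in the observation that top cycles are top homology. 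The only point to keep straight is that the $D$-system hypothesis is precisely the congruence needed to kill the differential over $\ZZ_D$ when $a_\cdot$ is taken to be a characteristic function, in contrast with the constant family $a_\cdot\equiv 1$ used in Corollary~\ref{cor:gcdgreaterthan1}.
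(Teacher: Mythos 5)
Your proposal is correct and follows essentially the same route as the paper's own proof: choose $a_\cdot$ to be the characteristic function of $\SS$ with values in $\ZZ_D$, note that the cycle equations \eqref{equ:chainZiscycle} reduce to $N_\SS({}^kH_{[i]})\equiv 0 \bmod D$, which is exactly the $D$-system hypothesis, and use non-emptiness of $\SS$ for non-triviality via \eqref{equ:chainZisnontrivial}. Your additional remark that top-dimensional cycles already represent non-zero homology classes (since $C_r(\Delta(\A_p(G));\ZZ_D)=0$) is a detail the paper leaves implicit, but it is the same argument.
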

\begin{proof}
Form the chain $Z_{G,a_\cdot}$ as in Equation \eqref{equ:Z}, where we choose $a_S=1$ for $S\in \SS$ and $0$ otherwise. This is a not trivial chain and the Equations \eqref{equ:chainZiscycle} for $Z_{G,a_\cdot}$ being a cycle become
\[
N_\SS({}^kH_{[i]})=0 \mod D,
\]
for all $i\in \{1,\ldots,r\}$ and all $K$-conjugates ${}^kH_{[i]}$ of $H_{[i]}$. This holds by hypothesis.
\end{proof}

Next we formulate a group theoretical condition that implies the existence of a $2$-system.

\begin{thm}\label{thm:existenceof2system}
Let $G$ be a semidirect product $G=K\rtimes H$ with $H=\FF_p^r$ and $K$ a $p'$-group. Assume there are elements $c_i\in C_K(H_{[i]})\setminus C_K(H)$ for $i=1,\ldots,r$ such $[c_i,c_j]=1$ for all $i$ and $j$. Then there exists a $2$-system for $G$ and hence $\widetilde H^c_{r-1}(|\A_p(G)|;\ZZ_2)\neq 0$.
\end{thm}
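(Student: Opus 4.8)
The plan is to construct an explicit $2$-system $\SS$ and then invoke Theorem \ref{thm:Dsystem} with $D=2$. First I would translate the whole problem into the group algebra $\ZZ_2[\Syl_p(G)]$. Writing $Q:=C_K(H)$ and $C_i:=C_K(H_{[i]})$ (so $Q\le C_i$), the transitivity-and-isotropy argument in the proof of Proposition \ref{prop:semidirectbasics}\eqref{prop:semidirectbasicsnumbersupergroups} identifies $\Syl_p(G)$ with the set $K/Q$ of left cosets via ${}^kH\leftrightarrow kQ$, and shows that ${}^kH_{[i]}\le{}^{k'}H$ iff $k'C_i=kC_i$. Thus the projection $\phi_i\colon K/Q\to K/C_i$ has fibres exactly the sets $\N({}^kH_{[i]})$. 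Encoding a subset $\SS\subseteq\Syl_p(G)$ as its indicator $a\in\ZZ_2[K/Q]$, the condition that $\SS$ be a $2$-system is precisely that every block sum over a $C_i$-coset is even, i.e. $(\phi_i)_* a=0$ for $i=1,\ldots,r$, where $(\phi_i)_*$ is the $\ZZ_2$-linear map induced by $\phi_i$.

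Next I would use the commuting family. Let $C=\langle c_1,\ldots,c_r\rangle$, which is abelian, act on $K/Q$ by left multiplication $c\cdot(kQ)=ckQ$; this makes $\ZZ_2[K/Q]$ a $\ZZ_2[C]$-module and each $(\phi_i)_*$ a map of $\ZZ_2[C]$-modules, since $\phi_i(c\cdot kQ)=ckC_i=c\cdot\phi_i(kQ)$. I then set
\[
a=\prod_{i=1}^r(1+c_i)\cdot[Q]=\sum_{S\subseteq\{1,\ldots,r\}}\Big[\Big(\textstyle\prod_{i\in S}c_i\Big)Q\Big]\in\ZZ_2[K/Q].
\]
The cycle condition is then immediate: using $\ZZ_2[C]$-linearity and $\phi_i(Q)=C_i$ gives $(\phi_i)_* a=\prod_{j}(1+c_j)\cdot[C_i]$, and the factor $(1+c_i)[C_i]=[C_i]+[c_iC_i]=[C_i]+[C_i]=0$ because $c_i\in C_i$. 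As $\ZZ_2[C]$ is commutative, this kills the whole product, so $(\phi_i)_* a=0$ for every $i$; hence $\SS:=\operatorname{supp}(a)$ meets each block in an even number of Sylow subgroups.

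The main obstacle is \emph{non-triviality}: collisions among the $2^r$ cosets $(\prod_{i\in S}c_i)Q$ could a priori make $a=0$, and $\ZZ_2[C]$ is far from semisimple when $2\mid|C|$, so a product of nonzero factors need not be nonzero. I would resolve this by computing the coefficient of the base coset $[Q]$, which is $|\{S:\prod_{i\in S}c_i\in Q\}|\bmod 2$, and showing this index set is exactly $\{\emptyset\}$. Suppose $\prod_{i\in S}c_i\in Q$ with $S\neq\emptyset$, and pick $k\in S$. Each $c_i$ with $i\in S\setminus\{k\}$ centralizes $H_{[i]}$ and hence the smaller subspace $W:=\bigcap_{i\in S\setminus\{k\}}H_{[i]}$, while $\prod_{i\in S}c_i\in Q=C_K(H)\le C_K(W)$; since the $c_i$ commute, $c_k=\big(\prod_{i\in S}c_i\big)\big(\prod_{i\in S\setminus\{k\}}c_i\big)^{-1}$ also lies in $C_K(W)$. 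The key linear-algebra fact is that $H_{[k]}$ and $W$ together span $H$: $H_{[k]}$ contains every basis vector $v_j$ with $j\neq k$, and $W$ contains $v_k$. Therefore $C_K(H_{[k]})\cap C_K(W)=C_K(H)=Q$, forcing $c_k\in Q$ and contradicting $c_k\notin C_K(H)$. Hence the coefficient of $[Q]$ equals $1$, so $a\neq0$ and $\SS$ is a non-empty $2$-system; Theorem \ref{thm:Dsystem} then gives $\widetilde H^c_{r-1}(|\A_p(G)|;\ZZ_2)\neq0$.
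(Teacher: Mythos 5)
Your proof is correct, and the $2$-system it produces is at bottom the same one the paper uses: the support of $a=\prod_{i=1}^r(1+c_i)\cdot[Q]$ is exactly the paper's set $\SS=\{{}^{c_1^{\delta_1}\cdots c_r^{\delta_r}}H : \delta_i\in\{0,1\}\}$ (that the $2^r$ cosets are pairwise distinct is the content of Remark \ref{rmk:liftstoZZjoinofspheres}, though you never need this). What differs is the verification. The paper proves evenness of each $N_\SS({}^kH_{[i]})$ by an explicit pairing: conjugating by $c_i^{\pm 1}$ sends a member of $\N_\SS({}^kH_{[i]})$ to a different member, flipping the exponent of $c_i$; nonemptiness of $\SS$ is then free, since $H\in\SS$. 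In your version the evenness is an algebraic triviality --- $(1+c_i)\cdot[C_i]=0$ kills $(\phi_i)_*a$ by $\ZZ_2[C]$-linearity and commutativity --- and any possible coincidences among the $2^r$ cosets are harmless because they are absorbed into mod-$2$ coefficients; the price is that nontriviality of $a$ is no longer free, and you pay it with the coefficient-of-$[Q]$ computation. That computation (no nonempty subproduct $\prod_{i\in S}c_i$ lies in $Q$) is the $\{0,1\}$-exponent case of what Remark \ref{rmk:liftstoZZjoinofspheres} proves for exponents in $\{-1,0,1\}$, but with a different and clean proof: since $H_{[k]}$ and $W=\bigcap_{i\in S\setminus\{k\}}H_{[i]}$ together generate $H$, one gets $C_K(H_{[k]})\cap C_K(W)=C_K(H)$, forcing $c_k\in Q$; the paper instead uses fixed points $h_i\in\bigcap_{j\neq i}H_{[j]}$ and Proposition \ref{prop:semidirectbasics}\eqref{prop:semidirectbasicsonlyconjugate}. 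A concrete advantage of your bookkeeping is robustness: the paper's pairing counts Sylow subgroups through their exponent tuples, so strictly speaking it leans on each element of $\SS$ having a unique tuple representation, which is only justified by the later remark; your module-theoretic formulation needs no such input, at the cost of the extra nontriviality step, and then Theorem \ref{thm:Dsystem} applies verbatim.
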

\begin{proof}
Let $\SS=\{{}^{c_1^{\delta_1}c_2^{\delta_2}\ldots c_r^{\delta_r}}H\text{, with $\delta_i\in \{0,1\}$ }\} \subseteq \Syl_p(G)$. We claim that $\SS$ is a $2$-system: consider ${}^kH_{[i]}$ for some $i\in \{1,\ldots,r\}$ and some $k\in K$. If $N_S({}^kH_{[i]})=0$ then there is nothing to prove. Assume then that the set $\N_\SS({}^kH_{[i]})$ is not empty and choose one of its elements: ${}^{c_1^{\delta_1}c_2^{\delta_2}\ldots c_r^{\delta_r}}H\in \N_\SS({}^kH_{[i]})$ for some values $\delta_i$'s in $\{0,1\}$. Then by Proposition \ref{prop:semidirectbasics}\eqref{prop:semidirectbasicsonlyconjugate} we have 
\[
{}^kH_{[i]}={}^{c_1^{\delta_1}c_2^{\delta_2}\ldots
 c_r^{\delta_r}}H_{[i]}<{}^{c_1^{\delta_1}c_2^{\delta_2}\ldots c_r^{\delta_r}}H.
\]
Assume first that $\delta_i=0$. Because $[c_i,c_j]=1$ for all $j$, we have:
\[
{}^{c_i}({}^{c_1^{\delta_1}\ldots c_{i-1}^{\delta_{i-1}}c_{i+1}^{\delta_{i+1}}\ldots c_r^{\delta_r}}H)={}^{c_1^{\delta_1}\ldots c_{i-1}^{\delta_{i-1}}c_ic_{i+1}^{\delta_{i+1}}\ldots c_r^{\delta_r}}H={}^{c_1^{\delta_1}\ldots c_{i-1}^{\delta_{i-1}}c_{i+1}^{\delta_{i+1}}\ldots c_r^{\delta_r}c_i}H.
\]
From here we deduce that ${}^{c_i}({}^{c_1^{\delta_1}\ldots c_{i-1}^{\delta_{i-1}}c_{i+1}^{\delta_{i+1}}\ldots c_r^{\delta_r}}H_{[i]})$ is a subgroup of the last two subgroups in the display above. So, by Proposition \ref{prop:semidirectbasics}\eqref{prop:semidirectbasicsonlyconjugate} again, we must have:
\[
{}^{c_i}({}^{c_1^{\delta_1}\ldots c_{i-1}^{\delta_{i-1}}c_{i+1}^{\delta_{i+1}}\ldots c_r^{\delta_r}}H_{[i]})={}^{c_1^{\delta_1}\ldots c_{i-1}^{\delta_{i-1}}c_ic_{i+1}^{\delta_{i+1}}\ldots c_r^{\delta_r}}H_{[i]}={}^{c_1^{\delta_1}\ldots c_{i-1}^{\delta_{i-1}}c_{i+1}^{\delta_{i+1}}\ldots c_r^{\delta_r}c_i}H_{[i]}.
\]
As $c_i$ centralizes $H_{[i]}$, the right-hand side group in the equation above is equal to ${}^{c_1^{\delta_1}\ldots c_{i-1}^{\delta_{i-1}}c_{i+1}^{\delta_{i+1}}\ldots c_r^{\delta_r}}H_{[i]}$. Hence, ${}^{c_i}({}^{c_1^{\delta_1}\ldots c_{i-1}^{\delta_{i-1}}c_{i+1}^{\delta_{i+1}}\ldots c_r^{\delta_r}}H)\in \N_\SS({}^kH_{[i]})$. Moreover, this $K$-conjugate of $H$ cannot be equal to ${}^{c_1^{\delta_1}\ldots c_{i-1}^{\delta_{i-1}}c_{i+1}^{\delta_{i+1}}\ldots c_r^{\delta_r}}H$ because then we would obtain $c_i\in C_K(H)$, a contradiction. So, we have shown, in the case $\delta_i=0$, that 
\[
{}^{c_1^{\delta_1}\ldots c_i^{\delta_i}\ldots c_r^{\delta_r}}H\in \N_\SS({}^kH_{[i]})\Rightarrow {}^{c_1^{\delta_1}\ldots c_i^{1-\delta_i}\ldots c_r^{\delta_r}}H\in \N_\SS({}^kH_{[i]}),
\]
and both $K$-conjugates of $H$ are different. The case $\delta_i=1$ is proven analogously using conjugation by $c_i^{-1}$ instead of by $c_i$. It follows that $N_\SS({}^kH_{[i]})$ is an even number.
\end{proof}

\begin{remark}\label{rmk:liftstoZZjoinofspheres}
The set $\SS$ defined in the proof of Theorem \ref{thm:existenceof2system} has size $2^r$: Consider non-trivial elements $h_i\in \cap_{j\neq i} H_{[j]}$. Then $H=\langle h_1,\ldots,h_r\rangle$. Moreover, if we have ${}^{c_1^{\epsilon_1}\ldots c_r^{\epsilon_r}}H=H$ with $\epsilon_i\in \{-1,0,1\}$, we obtain, by Proposition  \ref{prop:semidirectbasics}\eqref{prop:semidirectbasicsonlyconjugate}, that ${}^{c_i^{\epsilon_i}}h_i=h_i$ for all $i$. If there exists $i_0$ with $\epsilon_{i_0}\neq 0$, we deduce that  $[c_{i_0},h_{i_0}]=1$ and then, as $H=\langle H_{[i_0]},h_{i_0}\rangle$, that $c_{i_0}\in C_K(H)$, a contradiction.

Consider then the  chain  $Z_{G,b_\cdot}$, where $b_S=(-1)^{\sum_i \delta_i}$ if $S={}^{c_1^{\delta_1}c_2^{\delta_2}\ldots c_r^{\delta_r}}H\in \SS$ and $b_S=0$ otherwise. It represents a homology class  $[Z_{G,b_\cdot}]\in \widetilde H^c_{r-1}(|\A_p(G)|;\ZZ)$ which is a lift of  $[Z_{G,a_\cdot}]\in \widetilde H^c_{r-1}(|\A_p(G)|;\ZZ_2)$. In fact, $Z_{G,b_\cdot}$ is exactly the cycle corresponding to the join of the $r$ $0$-spheres $\{h_1,{}^{c_1}h_1\}, \ldots, \{h_r,{}^{c_r}h_r\}$.
\end{remark}

\begin{cor}\label{cor:abelian}
Let $G$ be a semidirect product $G=K\rtimes H$ with $H=\FF_p^r$ acting faithfully on the abelian $p'$-group $K$. Then there exists a $2$-system for $G$ and hence $H^c_{r-1}(|\A_p(G)|;\ZZ_2)\neq 0$.
\end{cor}
\begin{proof}
By Proposition \ref{prop:semidirectbasicsCpir}, there exist $r$ linearly independent hyperplanes $H_1,\ldots,H_r$ that satisfy $N(H_i)=|C_K(H_i)|/|C_K(H)|>1$. By a suitable change of basis in the $\FF_p$-vector space $H$, we may assume that $H_i=H_{[i]}$ for $i=1,\ldots,r$. As $K$ is abelian, any choice of elements $c_i\in C_K(H_{[i]})\setminus C_K(H)\neq \emptyset$ for $i=1,\ldots,r$ satisfies the hypothesis of Theorem \ref{thm:existenceof2system}.
\end{proof}

This prove Theorem \ref{thm:Kisqgroupabelianasymptotic}\eqref{thm:Kisqgroupabelianasymptotic.abelian}. We finish this section by proving Theorem \ref{thm:conquotients} of the introduction.

\begin{thm}\label{thm:quotient}
Let $G$ be a semidirect product $G=K\rtimes H$ with $H=\FF_p^r$ and $K$ a $p'$-group and let $N\unlhd K$ be a normal $H$-invariant subgroup of $K$. Consider the quotient $\bar G=G/N$ and let $A$ be an abelian group. Then every class $[Z_{\bar G,\bar a_\cdot}]\in H^c_{r-1}(|\A_p(\bar G)|;A)$ can be lifted to a class  $[Z_{G,a}]\in \widetilde H^c_{r-1}(|\A_p(G)|;A)$. Moreover, a non-trivial class lifts to a non-trivial class.
\end{thm}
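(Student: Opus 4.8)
The plan is to exploit the tight combinatorial description of constructible classes via Equations \eqref{equ:chainZiscycle}. The quotient map $\pi\colon G\to \bar G=G/N$ induces a bijection on Sylow $p$-subgroups: since $N$ is an $H$-invariant $p'$-subgroup, $\pi$ restricts to an isomorphism on each Sylow $p$-subgroup, and by Proposition \ref{prop:semidirectbasics}\eqref{prop:semidirectbasicscentralizerinquotient} we have $|\Syl_p(\bar G)|=|K/N|/|C_{K/N}(H)|=|K/N|/(|C_K(H)N/N|)$. First I would verify that $\pi$ sends $\Syl_p(G)$ onto $\Syl_p(\bar G)$ surjectively, with fibers of constant size: each $\bar S={}^{\bar k}\bar H\in\Syl_p(\bar G)$ is hit by exactly those $S={}^{kn}H$ with $\pi(kn)$ representing $\bar k$, and the number of such $S$ is the index $[N:N\cap C_K(H)\cdot(\ldots)]$ governed by how $N$ meets the conjugating cosets. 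I would pin down that this fiber size is constant (independent of $\bar S$) using transitivity of the $C_K(H)$-action from Proposition \ref{prop:semidirectbasics}.

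The core of the lift is then to define $a_\cdot$ on $\Syl_p(G)$ by pulling back $\bar a_\cdot$: set $a_S=\bar a_{\pi(S)}$ for every $S\in\Syl_p(G)$. Next I would check that the cycle condition \eqref{equ:chainZiscycle} for $a_\cdot$ follows from that for $\bar a_\cdot$. Fix $i$ and a conjugate ${}^kH_{[i]}$. The sum $\sum_{S\in\N({}^kH_{[i]})}a_S$ should be compared with $\sum_{\bar S\in\bar\N({}^{\bar k}\bar H_{[i]})}\bar a_{\bar S}$. The key identity is that $\pi$ maps $\N({}^kH_{[i]})$ onto $\bar\N({}^{\bar k}\bar H_{[i]})$ and, crucially, that this map is \emph{fiber-constant with constant multiplicity}: by Proposition \ref{prop:semidirectbasics}\eqref{prop:semidirectbasicsnumbersupergroups}, $N(H_{[i]})=|C_K(H_{[i]})|/|C_K(H)|$ downstairs and $|C_{K/N}(H_{[i]})|/|C_{K/N}(H)|$ upstairs, and using item \eqref{prop:semidirectbasicscentralizerinquotient} these ratios differ by a factor equal to the fiber size. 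Thus $\sum_{S\in\N({}^kH_{[i]})}a_S=(\text{fiber size})\cdot\sum_{\bar S}\bar a_{\bar S}=0$ whenever the downstairs sum vanishes, giving that $Z_{G,a_\cdot}$ is again a cycle and hence a constructible class.

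For the ``non-trivial lifts to non-trivial'' statement, I would argue that if $\bar a_\cdot$ is not identically zero, then some $\bar a_{\bar S_0}\neq 0$, and by construction every $S$ in the (non-empty) fiber over $\bar S_0$ satisfies $a_S=\bar a_{\bar S_0}\neq 0$, so $Z_{G,a_\cdot}$ is non-trivial by criterion \eqref{equ:chainZisnontrivial}. The genuinely homological content---that a non-trivial constructible \emph{homology class} maps to a non-trivial one, rather than merely the chain being non-zero---should reduce to the observation made after Definition \ref{defn:constructiblelclass} that a constructible chain is a cycle precisely when \eqref{equ:chainZiscycle} holds and is non-trivial precisely when \eqref{equ:chainZisnontrivial} holds, so that non-triviality of the class is detected at the chain level by the $a_S$'s themselves.

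The main obstacle I expect is the bookkeeping around the fiber structure of $\pi$ on Sylow subgroups and, in particular, showing that the fiber size is \emph{uniform} across all $\bar S$ and compatibly across the sets $\N({}^kH_{[i]})$ for varying $i$ and $k$; this uniformity is what makes the single scalar factor pull out of the sum in \eqref{equ:chainZiscycle}. A secondary subtlety is ensuring the chosen conjugating elements $k_S$ upstairs are compatible with fixed choices $\bar k_{\bar S}$ downstairs so that the simplices $Z_{S,a_S}$ genuinely map to $Z_{\bar S,\bar a_{\bar S}}$ under $\pi$; here the independence of $Z_{S,a_S}$ from the choice of $k_S$ (established in the Lemma preceding \eqref{equ:Z}) removes the ambiguity, and Proposition \ref{prop:semidirectbasics}\eqref{prop:semidirectbasicsonlyconjugate} guarantees the required equalities of conjugated hyperplanes.
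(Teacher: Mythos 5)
Your strategy is the same as the paper's: pull the coefficients back along the induced surjection $\Syl_p(\pi)\colon \Syl_p(G)\to\Syl_p(\bar G)$ by setting $a_S=\bar a_{\pi(S)}$, show that each equation \eqref{equ:chainZiscycle} upstairs is a constant multiple of the corresponding equation downstairs, and note that non-triviality is detected by the coefficients themselves (your last paragraph on this point is correct, since in top dimension there are no boundaries). However, the step you yourself flag as ``the main obstacle'' --- that the restriction of $\Syl_p(\pi)$ to $\N({}^kH_{[i]})$ maps onto $\N({}^{\pi(k)}\pi(H_{[i]}))$ with fibers of \emph{constant} size --- is precisely the crux, and you neither prove it nor identify the multiplicity correctly. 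The relevant factor is \emph{not} the size $|N|/|C_N(H)|$ of the fibers of $\Syl_p(\pi)$ on all of $\Syl_p(G)$. The paper shows that $\N({}^{k}H_{[i]})\cap \Syl_p(\pi)^{-1}(\pi(S))=\{{}^nS,\ n\in N\cap C_K({}^{k}H_{[i]})\}$: the point is that if ${}^nS$ still contains ${}^kH_{[i]}$, then ${}^kH_{[i]}$ and ${}^{n^{-1}k}H_{[i]}$ are $K$-conjugate subgroups of $S$, hence equal by Proposition \ref{prop:semidirectbasics}\eqref{prop:semidirectbasicsonlyconjugate}, forcing $n\in C_N({}^kH_{[i]})$. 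Consequently the fiber of the restricted map over $\pi(S)$ has order $m=|C_N({}^{k}H_{[i]})|/|C_N(S)|$, and this is constant in $S$ because $N\unlhd K$ gives $C_N(S)={}^{k_S}C_N(H)$, so $|C_N(S)|=|C_N(H)|$ for every $S$. Your ratio computation via Proposition \ref{prop:semidirectbasics}(\ref{prop:semidirectbasicsnumbersupergroups}),(\ref{prop:semidirectbasicscentralizerinquotient}) only determines the \emph{average} fiber size of the restricted map, so it cannot by itself justify pulling a single scalar out of the sum.

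Moreover, the value you assert is simply false in general: the multiplicity is local, depending on $i$ (and $k$). Take the setting of Example \ref{exa:chainforsemidirectr=2} with $N=C_3\times 0\unlhd K=C_3\times C_3$, which is $H$-invariant. The fibers of $\Syl_p(\pi)$ have size $|N|/|C_N(H)|=3$, yet over the hyperplane $H_1$ the restricted multiplicity is $m=|C_N(H_1)|/|C_N(H)|=1$ (here $C_N(H_1)=N\cap(0\times C_3)=1$), while over $H_2$ it is $m=3$. So ``the ratios differ by a factor equal to the fiber size'' fails for $H_1$. Fortunately your argument never needs a global factor: equations \eqref{equ:chainZiscycle} are one homogeneous equation per conjugate hyperplane, so only constancy of the multiplicity \emph{within each fixed} $\N({}^kH_{[i]})$ is required, and the uniformity ``across the sets $\N({}^kH_{[i]})$ for varying $i$ and $k$'' that worries you in your final paragraph is irrelevant. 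Once you replace the global fiber-size claim by the orbit description $\{{}^nS,\ n\in C_N({}^kH_{[i]})\}$ and the normality argument above, your proof closes and coincides with the paper's.
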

\begin{proof}
Consider the projection homomorphism $\pi\colon G\to \bar G$. Because its kernel is the $p'$-order subgroup $N$, there is an induced surjective application:
\[
\Syl_p(\pi)\colon \Syl_p(G)\to \Syl_p(\bar G).
\]
Moreover, for $\pi(S)\in \Syl_p(\bar G)$, we have $\Syl_p(\pi)^{-1}( \pi(S))=\{{}^nS,n\in N\}$ by Proposition \ref{prop:semidirectbasics}\eqref{prop:semidirectbasicscentralizerinquotient}. We have the collection of  elements of $A$, $\bar a_\cdot=({\bar a}_{\bar S})_{\bar S\in \Syl_p(\bar G)}$, and we define  $a_\cdot=(a_S)_{S\in \Syl_p(G)}$ by $a_S={\bar a}_{\pi(S)}$. Now choose a $K$-conjugate ${}^kH_{[i]}$ of $H_{[i]}$. Then the equality
\[
\Syl_p(\pi)(\N({}^kH_{[i]}))=\N({}^{\pi(k)}\pi(H_{[i]}))
\]
follows from Proposition \ref{prop:semidirectbasics}\eqref{prop:semidirectbasicscentralizerinquotient}. 
Equation \eqref{equ:chainZiscycle} for $Z_{G,a}$ at ${}^kH_{[i]}$ become:
\[
\sum_{S\in  \N({}^{k}H_{[i]})} a_{S}=\sum_{\pi(S)\in  \N({}^{\pi(k)}\pi(H_{[i]}))}\hspace{10pt}\sum_{S'\in  \N({}^{k}H_{[i]})\cap \Syl_p^{-1}(\pi)(\pi(S))} {\bar a}_{\pi(S)}=0.
\]
\end{proof}
Note  that $\N({}^{k}H_{[i]})\cap \Syl_p^{-1}(\pi)(\pi(S))=\{{}^nS,n\in N\cap C_K({}^{k}H_{[i]})\}$ and that this set has order $m=|C_N({}^{k}H_{[i]})|/|C_N(S)|$. As $N$ is normal in $K$, the denominator $|C_N(S)|$ does not depend on the Sylow $p$-subgroup $S\in \N({}^{k}H_{[i]})$ and it is clear that $m$ does not depend neither. Hence, we may rewrite the equation above as 
\[
m\cdot \sum_{\pi(S)\in  \N({}^{p(k)}\pi(H_{[i]}))}{\bar a}_{\pi(S)}=0,
\]
and this equation holds because $Z_{\bar G,\bar a}$ is a cycle. It is immediate from the construction that non-trivial classes lifts to non-trivial classes.

\begin{exa}\label{exa:c3c3c3c2c2c2}
Consider $G=C_5\times C_5\times C_5 \rtimes \FF_2\times \FF_2\times \FF_2$ with the generators acting by sending $(x_1,x_2,x_3)$ to $(-x_1,x_2,x_3)$, $(x_1,-x_2,x_3)$ and $(x_1,x_2,-x_3)$ respectively. Set $H_1=0\times \FF_2\times \FF_2$, $H_2=\FF_2\times 0\times \FF_2$ and $H_3=\FF_2\times \FF_2\times 0$. Then we have $C_K(H_1)=C_5\times 0\times 0$, $C_K(H_2)=0\times C_5\times 0$, $C_K(H_3)=0\times 0\times C_5$, $N(H_i)=25$ and $C_K(H)=1$. Although Corollaries \ref{cor:Kisrhogroup} and \ref{cor:asymptotic} apply in this case, we focus on the construction in Corollary \ref{cor:abelian}. Set $c_1=(1,0,0)$, $c_2=(0,1,0)$ and $c_3=(0,0,1)$. Then the elements $c_i\in C_K(H_i)\setminus C_K(H)$ satisfy the hypothesis of Theorem \ref{thm:existenceof2system}. The $2$-system constructed in the proof of this result is exactly $\SS=\{H,{}^{c_1}H,{}^{c_2}H,{}^{c_3}H,{}^{c_1c_2}H,{}^{c_1c_3}H,{}^{c_2c_3}H,{}^{c_1c_2c_3}H\}$ and has size $8$. The subgraph of the graph $\G$ (Definition \ref{def:graphKH}) containing $\SS$ and its neighbours is the following.
{\small
\[
\xymatrix@C=5pt{
&&H\ar@{-}[]+D;[dll]+U\ar@{-}[]+D;[dl]+U\ar@{-}[]+D;[d]+U
&{}^{c_1}H\ar@{-}[]+D;[dlll]+U\ar@{-}[]+D;[d]+U\ar@{-}[]+D;[dr]+U
&{}^{c_2}H\ar@{-}[]+D;[dlll]+U\ar@{-}[]+D;[dr]+U\ar@{-}[]+D;[drr]+U
&{}^{c_3}H\ar@{-}[]+D;[dlll]+U\ar@{-}[]+D;[drr]+U\ar@{-}[]+D;[drrr]+U
&{}^{c_1c_2}H\ar@{-}[]+D;[dlll]+U\ar@{-}[]+D;[dl]+U\ar@{-}[]+D;[drrr]+U
&{}^{c_1c_3}H\ar@{-}[]+D;[dlll]+U\ar@{-}[]+D;[d]+U\ar@{-}[]+D;[drrr]+U
&{}^{c_2c_3}H\ar@{-}[]+D;[dll]+U\ar@{-}[]+D;[d]+U\ar@{-}[]+D;[drrr]+U
&{}^{c_1c_2c_3}H\ar@{-}[]+D;[d]+U\ar@{-}[]+D;[dr]+U\ar@{-}[]+D;[drr]+U\\
H_1&H_2&H_3&{}^{c_1}H_2&{}^{c_1}H_3&{}^{c_2}H_1&{}^{c_2}H_3&{}^{c_3}H_1&{}^{c_3}H_2&{}^{c_1c_2}H_3&{}^{c_1c_3}H_2&{}^{c_2c_3}H_1
}
\]
}
\begin{minipage}{0.6\textwidth}
The homology class $[Z_{G,a_\cdot}]\in \widetilde H^c_2(|\A_p(G)|;\ZZ_2)$ has support in the $2$-dimensional simplicial complex on the right-hand side. The eight faces of the hollow octahedron are in bijection with the set $\SS$. For each element of $\SS$, its corresponding face is decomposed as in Example \ref{exa:chainforeagr=3}, that is, via its barycentric subdivision (shown in one of the faces of the figure).
\end{minipage}
\begin{minipage}{0.4\textwidth}
\centering
\definecolor{aqaqaq}{rgb}{0.6274509803921569,0.6274509803921569,0.6274509803921569}
\begin{tikzpicture}[scale=0.4,line cap=round,line join=round,>=triangle 45,x=1.0cm,y=1.0cm]
\clip(4,2) rectangle (14, 13);
\fill[line width=0.pt,color=aqaqaq,fill=aqaqaq,fill opacity=0.5] (9.12,12.28) -- (5.12,7.36) -- (9.78,5.5) -- cycle;
\fill[line width=0.pt,color=aqaqaq,fill=aqaqaq,fill opacity=0.5] (9.12,12.28) -- (5.12,7.36) -- (8.34,9.5) -- cycle;
\fill[line width=0.pt,color=aqaqaq,fill=aqaqaq,fill opacity=0.5] (9.12,12.28) -- (9.78,5.5) -- (13.42,9.02) -- cycle;
\fill[line width=0.pt,color=aqaqaq,fill=aqaqaq,fill opacity=0.5] (9.12,12.28) -- (8.34,9.5) -- (13.42,9.02) -- cycle;
\fill[line width=0.pt,color=aqaqaq,fill=aqaqaq,fill opacity=0.5] (8.34,9.5) -- (5.12,7.36) -- (9.,3.) -- cycle;
\fill[line width=0.pt,color=aqaqaq,fill=aqaqaq,fill opacity=0.5] (5.12,7.36) -- (9.78,5.5) -- (9.,3.) -- cycle;
\fill[line width=0.pt,color=aqaqaq,fill=aqaqaq,fill opacity=0.5] (9.,3.) -- (9.78,5.5) -- (13.42,9.02) -- cycle;
\fill[line width=0.pt,color=aqaqaq,fill=aqaqaq,fill opacity=0.5] (9.,3.) -- (8.34,9.5) -- (13.42,9.02) -- cycle;
\draw (9.12,12.28)-- (5.12,7.36);
\draw (5.12,7.36)-- (9.78,5.5);
\draw [line width=2.pt] (9.78,5.5)-- (13.42,9.02);
\draw [line width=2.pt] (9.12,12.28)-- (9.78,5.5);
\draw [line width=2.pt] (9.12,12.28)-- (13.42,9.02);
\draw (9.12,12.28)-- (8.34,9.5);
\draw (8.34,9.5)-- (5.12,7.36);
\draw (8.34,9.5)-- (13.42,9.02);
\draw (5.12,7.36)-- (9.,3.);
\draw (9.,3.)-- (9.78,5.5);
\draw (9.,3.)-- (13.42,9.02);
\draw (9.,3.)-- (8.34,9.5);
\draw [line width=2.pt] (9.45,8.89)-- (11.1,8.84);
\draw [line width=2.pt] (11.1,8.84)-- (11.27,10.65);
\draw [line width=2.pt] (11.1,8.84)-- (11.6,7.26);
\draw [line width=2.pt] (11.1,8.84)-- (9.12,12.28);
\draw [line width=2.pt] (11.1,8.84)-- (9.78,5.5);
\draw [line width=2.pt] (11.1,8.84)-- (13.42,9.02);
\end{tikzpicture}
\end{minipage}

\end{exa}

\section{Quillen's conjecture}
\label{section:Quillen'sconjecture}

In this section we prove the results of the introduction directly related to Quillen's conjecture. We start with Theorem \ref{thm:QDpcZ2solvable}.

\begin{thm}\label{thm:QDpcZ2solvableproven}
$\Q\D^{c,\ZZ_2}_p$ holds for $K\rtimes H$ with $H=\FF_p^r$ and $K$ a solvable $p'$-group.
\end{thm}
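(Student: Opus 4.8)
The plan is to reduce, via the quotient machinery of Theorem \ref{thm:quotient}, to a situation where $K$ carries an $H$-invariant \emph{abelian} subgroup on which $H=\FF_p^r$ acts faithfully, and then to invoke the $2$-system construction of Theorem \ref{thm:existenceof2system}. By \eqref{equ:O_p=1sameasfaithfulaction} the hypothesis $O_p(K\rtimes H)=1$ is exactly faithfulness of the $H$-action on $K$, so I assume this throughout.

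First I would pass to the Fitting subgroup $F=F(K)$, which is characteristic, hence $H$-invariant, and nilpotent. Since $K$ is solvable, $F$ is self-centralizing, $C_K(F)\le F$. A coprime-action argument then shows $H$ acts faithfully on $F$: if $h\in H$ centralizes $F$, form the solvable group $L=K\rtimes\langle h\rangle$; as $H$, and hence $\langle h\rangle$, acts faithfully on $K$ one has $O_p(L)=1$, whence $F(L)=F(K)$, and self-centralizing gives $h\in C_L(F(L))\le F(L)\le K$, so $h=1$. Next, because $F$ is nilpotent its Frattini quotient $\bar F=F/\Phi(F)$ is abelian, and the standard fact that an automorphism of order prime to $|F|$ acting trivially on $F/\Phi(F)$ acts trivially on $F$ shows that $H$ acts faithfully on $\bar F$ as well.

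Now set $\bar K=K/\Phi(F)$; the subgroup $\Phi(F)$ is $H$-invariant and normal in $K$, and $\bar F=F/\Phi(F)$ is an $H$-invariant abelian subgroup of $\bar K$ on which $H$ is faithful. Applying Proposition \ref{prop:semidirectbasicsCpir} to $\bar F\rtimes H$ produces $r$ linearly independent hyperplanes, which after a change of basis I take to be $H_{[1]},\ldots,H_{[r]}$, satisfying $C_{\bar F}(H)\subsetneq C_{\bar F}(H_{[i]})$. Choosing $c_i\in C_{\bar F}(H_{[i]})\setminus C_{\bar F}(H)$, these elements pairwise commute (as $\bar F$ is abelian) and satisfy $c_i\in C_{\bar K}(H_{[i]})\setminus C_{\bar K}(H)$, since $C_{\bar F}(H)=\bar F\cap C_{\bar K}(H)$. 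Theorem \ref{thm:existenceof2system} then yields a $2$-system for $\bar K$ and hence $\widetilde H^c_{r-1}(|\A_p(\bar K\rtimes H)|;\ZZ_2)\neq 0$. Finally, Theorem \ref{thm:quotient} applied to the $H$-invariant normal subgroup $N=\Phi(F)\unlhd K$ lifts this nonvanishing class to $\widetilde H^c_{r-1}(|\A_p(K\rtimes H)|;\ZZ_2)\neq 0$.

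The main obstacle is the faithfulness bookkeeping in the two reductions, from $K$ to $F(K)$ and then to $F(K)/\Phi(F(K))$, which rests on the self-centralizing property of the Fitting subgroup of a solvable group together with the coprime behaviour of Frattini quotients; once these are in place the geometric input (Proposition \ref{prop:semidirectbasicsCpir} and Theorems \ref{thm:existenceof2system} and \ref{thm:quotient}) is essentially mechanical. I note that this route proves the result directly through Theorem \ref{thm:existenceof2system} rather than quoting Corollary \ref{cor:abelian} verbatim, since $\bar K$ itself need not be abelian, only the image $\bar F$ of $F$ is; it is the commuting elements supplied by $\bar F$, and not abelianness of the whole group, that drive the argument.
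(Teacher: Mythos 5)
Your proof is correct, and it uses the same ingredients as the paper's --- the self-centralizing Fitting subgroup, Burnside's result on Frattini quotients, the $2$-system criterion of Theorem \ref{thm:existenceof2system}, and the lifting Theorem \ref{thm:quotient} --- but assembles them in a different order, which changes which reduction steps are actually needed. The paper restricts first to the subgroup $F(K)\rtimes H$, applies Corollary \ref{cor:abelian} to the abelian group $F(K)/\Phi(F(K))$, lifts via Theorem \ref{thm:quotient} to get a nonzero class in $\widetilde H^c_{r-1}(|\A_p(F(K)\rtimes H)|;\ZZ_2)$, and then invokes an inclusion $\widetilde H^c_{r-1}(|\A_p(F(K)\rtimes H)|;\ZZ_2)\leq \widetilde H^c_{r-1}(|\A_p(K\rtimes H)|;\ZZ_2)$, i.e.\ extension by zero of the coefficient family $a_\cdot$, whose verification (that the cycle equations \eqref{equ:chainZiscycle} indexed by all $K$-conjugates of the hyperplanes still hold) rests on Proposition \ref{prop:semidirectbasics}\eqref{prop:semidirectbasicsonlyconjugate}. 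You instead quotient the whole group $K$ by $\Phi(F(K))$ at the outset, extract pairwise commuting elements $c_i$ from the abelian image $\bar F\leq \bar K$, and feed them directly into Theorem \ref{thm:existenceof2system} for $\bar K\rtimes H$ --- correctly observing that Corollary \ref{cor:abelian} does not apply verbatim since $\bar K$ need not be abelian --- and then perform a single lift via Theorem \ref{thm:quotient} with $N=\Phi(F(K))$. What your route buys is the elimination of the subgroup-inclusion step, which is the one step the paper asserts without detail and the only place where constructible classes of a subgroup complex must be compared with those of the ambient complex; what the paper's route buys is that all the work happens inside the nilpotent group $F(K)$, so the abelian case can be quoted as a black box. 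Your faithfulness bookkeeping (faithful on $F(K)$ via the self-centralizing property in a solvable overgroup, then faithful on $F(K)/\Phi(F(K))$ via Burnside) coincides with the paper's, your auxiliary group $L=K\rtimes\langle h\rangle$ being a harmless variant of the paper's direct use of $F(K\rtimes H)=F(K)$ inside the solvable group $K\rtimes H$.
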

\begin{proof}
As $O_p(K\rtimes H)=1$ we have that the action of $H$ on $K$ is faithful. Because $K\rtimes H$ is solvable, its Fitting group $F(K\rtimes H)=F(K)$ is self-centralizing \cite[31.10]{AS2000} and hence $H$ also acts faithfully on the nilpotent group $F(K)$. Now, the Frattini quotient $F(K)/\Phi(F(K))$ is abelian and, by a result of Burnside \cite[5.1.4]{Gor1980}, $H$ acts faithfully on it. Next, by Corollary \ref{cor:abelian}, $\widetilde H^c_{r-1}(|\A_p(F(K)/\Phi(F(K))\rtimes H)|;\ZZ_2)\neq 0$. Then by Theorem \ref{thm:quotient} we also have $\widetilde H^c_{r-1}(|\A_p(F(K)\rtimes H)|;\ZZ_2)\neq 0$. Finally, the inclusion $\widetilde  H^c_{r-1}(|\A_p(F(K)\rtimes H)|;\ZZ_2)\leq \widetilde H^c_{r-1}(|\A_p(K\rtimes H)|;\ZZ_2)$ finishes the proof.
\end{proof}

Next we prove Theorem  \ref{thm:QCsolvable} of the introduction.

\begin{thm}\label{thm:QCsolvableproven}
Quillen's conjecture holds for solvable groups.
\end{thm}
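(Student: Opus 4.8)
The plan is to deduce Quillen's conjecture for solvable groups directly from Theorem \ref{thm:QDpcZ2solvableproven} together with the reduction encoded in Equation \eqref{equ:enoughforsolvablecase}. First I would recall that, by the discussion in the introduction, it suffices to verify $\Q\D^c_p$ for all semidirect products $K\rtimes \FF_p^r$ with $K$ a solvable $p'$-group, since Equation \eqref{equ:enoughforsolvablecase} asserts precisely that this implies $\Q\C$ for all solvable $G$. The content of Theorem \ref{thm:QDpcZ2solvableproven} is exactly the stronger statement that $\Q\D^{c,\ZZ_2}_p$ holds for every such $K\rtimes \FF_p^r$, and since $\Q\D^{c,\ZZ_2}_p$ in particular witnesses $\Q\D^c_p$ (recall $\Q\D^c_p$ means $\Q\D^{c,A}_p$ for some abelian group $A$), the hypothesis of \eqref{equ:enoughforsolvablecase} is satisfied.

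Concretely, the steps are as follows. Let $G$ be a finite solvable group and assume $|\A_p(G)|$ is contractible; I must show $O_p(G)\neq 1$. The implication \eqref{equ:enoughforsolvablecase} is Quillen's observation \cite[Corollary 12.2]{Quillen1978}: knowing $\Q\D^c_p$ on all the building-block semidirect products $K\rtimes \FF_p^r$ (with $K$ solvable $p'$-group and $\FF_p^r$ acting, necessarily faithfully when $O_p=1$ by \eqref{equ:O_p=1sameasfaithfulaction}) is enough to conclude $\Q\C$ for an arbitrary solvable group. I would therefore simply invoke Theorem \ref{thm:QDpcZ2solvableproven} to supply that hypothesis and then cite \eqref{equ:enoughforsolvablecase} to obtain the conclusion.

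The main obstacle is conceptual rather than computational: essentially all the work has already been done. The nontrivial input is the reduction \eqref{equ:enoughforsolvablecase} itself, which is Quillen's inductive argument reducing the general solvable case to the semidirect-product case, and the construction of the nonzero constructible class in Theorem \ref{thm:QDpcZ2solvableproven}. Given these, the present theorem is a one-line deduction, so the only care needed is to state the logical chain $\Q\D^{c,\ZZ_2}_p \Rightarrow \Q\D^c_p \Rightarrow \Q\C$ cleanly. Thus the proof reads:

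\begin{proof}
By Theorem \ref{thm:QDpcZ2solvableproven}, $\Q\D^{c,\ZZ_2}_p$ holds for every semidirect product $K\rtimes \FF_p^r$ with $K$ a solvable $p'$-group. In particular $\Q\D^c_p$ holds for all such groups. The conclusion now follows immediately from Equation \eqref{equ:enoughforsolvablecase}.
\end{proof}
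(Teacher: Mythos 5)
Your logical skeleton ($\Q\D^{c,\ZZ_2}_p \Rightarrow \Q\D^c_p \Rightarrow \Q\C$ for solvable groups) is the one the introduction announces, but measured against the paper's actual proof it has a genuine gap: Equation \eqref{equ:enoughforsolvablecase} is never proven anywhere in the paper as a standalone statement. It is an unproved assertion in the introduction (``As before, we have that\ldots''), and the place where the paper justifies it is precisely the proof of this theorem, which ``reproduces Quillen's argument.'' So by citing \eqref{equ:enoughforsolvablecase} as your key step, you defer the entire mathematical content to a statement whose only justification in the paper would be the proof you are supposed to be writing. Your attribution is also off: \cite[Corollary 12.2]{Quillen1978} is Quillen's proof of $\Q\D_p$ for the semidirect products $K\rtimes \FF_p^r$ with $K$ a solvable $p'$-group, i.e.\ the \emph{left-hand side} of \eqref{equ:Quillenenoughforsolvablecase}, not the reduction implication itself.

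What is missing, and what the paper's proof actually consists of, is the reduction argument: let $G$ be solvable with $O_p(G)=1$, let $r$ be the $p$-rank of $G$, and choose $H\in \A_p(G)$ of rank $r$. By Hall--Higman \cite[Lemma 1.2.3]{HH1956}, $K=O_{p'}(G)$ is self-centralizing, so $H$ acts faithfully on the solvable $p'$-group $K$; equivalently $O_p(K\rtimes H)=1$ by \eqref{equ:O_p=1sameasfaithfulaction}. Theorem \ref{thm:QDpcZ2solvableproven} then yields $\widetilde H^c_{r-1}(|\A_p(K\rtimes H)|;\ZZ_2)\neq 0$. Finally, since $r$ is the $p$-rank of $G$, the dimension $r-1$ is the top dimension of $\Delta(\A_p(G))$, so the top homology of the subcomplex injects: $\widetilde H_{r-1}(|\A_p(K\rtimes H)|;\ZZ_2)\leq \widetilde H_{r-1}(|\A_p(G)|;\ZZ_2)$, because a nonzero $(r-1)$-cycle of the subcomplex remains a cycle in the full complex and there are no $r$-simplices available to bound it. Hence $\widetilde H_{r-1}(|\A_p(G)|;\ZZ_2)\neq 0$ and $|\A_p(G)|$ is not contractible. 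These three steps --- Hall--Higman, the application of Theorem \ref{thm:QDpcZ2solvableproven} to $K\rtimes H$, and the top-dimension injection --- are exactly what your citation of \eqref{equ:enoughforsolvablecase} conceals, and they are the whole proof.
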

\begin{proof}
We reproduce Quillen's argument that reduces this result to the configuration of Theorem \ref{thm:QDpcZ2solvableproven}: Let $G$ be a solvable group with $O_p(G)=1$. Let $r$ be the $p$-rank of $G$ and choose $H\in A_p(G)$ of rank $r$. By Hall-Highman \cite[Lemma 1.2.3]{HH1956} we have that $K=O_{p'}(G)$ is self centralizing and hence $H$ acts faithfully on this solvable group $K$. By Theorem \ref{thm:QDpcZ2solvableproven} we have that $\widetilde H^c_{r-1}(|\A_p(K\rtimes H)|;\ZZ_2)\neq 0$. The inclusion $\widetilde H_{r-1}(|\A_p(K\rtimes H)|;\ZZ_2)\leq \widetilde H_{r-1}(|\A_p(G)|;\ZZ_2)$ finishes the proof.
\end{proof}

Next, we focus on the $p$-solvable case. We start describing fixed points for actions on direct products. We say that the group $H$ acts by permutations on the direct product $Y=X_1\times\ldots\times X_m$ if $H$ acts on $Y$ and each $h\in H$ permutes the groups $\{X_1,\ldots,X_m\}$ among themselves. This is the case, for instance, if each $X_i$ is quasisimple, by the Krull-Schmidt Theorem \cite[3.22]{GLSII}.

\begin{lem}\label{prop:nleqm}
Let $H$ be an elementary abelian $p$-group acting by permutations on the $m$-fold direct product $Y=X\times \ldots \times X$  and transitively permuting these components. Then the following hold:
\begin{enumerate}
\item \label{prop:nleqmcentralizerdescription} If $H'=N_H(X)$ and $H''$ satisfies  $H=H'\times H''$ then 
\[
C_Y(H)=\{\prod_{h\in H''} {}^hx\text{ with $x\in C_X(H')$}\}.
\]
\item \label{prop:nleqmcentralizerquotientequivalence}For $I\leq H$, $H'=N_H(X)$,  $I'=N_I(X)$, and $k$ equal to the number of orbits for the permutation action of $I$ on the components of $Y$, we have:
\[
C_Y(I)>C_Y(H)\Leftrightarrow \bigg[ \Big[ k=1 \text{ and }C_X(I')>C_X(H')\Big] \text{ or }\Big[ k>1 \text{ and } C_X(I')>1 \Big]\bigg].
\]
\end{enumerate}
\end{lem}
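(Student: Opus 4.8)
The plan is to exploit that $H$ is abelian, which forces the stabilizer of every component to coincide with $H'=N_H(X)$. Writing the components as $X_1,\dots,X_m$ with $X=X_1$, they form a single $H$-orbit; point stabilizers within one orbit are conjugate, and conjugate subgroups of an abelian group are equal, so $N_H(X_i)=H'$ for every $i$. Consequently, for any $I\leq H$ the stabilizer in $I$ of each component is $I'=I\cap H'$, uniformly across all $I$-orbits, and since $H$ is elementary abelian the subgroup $I'$ is always a direct factor of $I$. This uniform stabilizer description is the one observation I would isolate at the outset, as it is what makes both parts go through.

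For part \eqref{prop:nleqmcentralizerdescription}, I would use the complement $H=H'\times H''$ to identify the components with $H''$: the map $h\mapsto {}^{h}X$ is a bijection from $H''$ onto $\{X_1,\dots,X_m\}$, so every $y\in Y$ is uniquely $y=\prod_{h\in H''}{}^{h}x_h$ with $x_h\in X$. First I would show $y$ is $H''$-fixed iff all $x_h$ equal a common $x\in X$: acting by $g\in H''$ relabels the components by left translation, which is simply transitive on $H''$, so comparing components forces $x_{g^{-1}h}=x_h$ for all $g,h$. Then, for such $y=\prod_h{}^{h}x$, acting by $g\in H'$ and using commutativity $gh=hg$ gives ${}^{g}y=\prod_h{}^{h}({}^{g}x)$, so $y$ is $H'$-fixed iff $x\in C_X(H')$. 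Being $H$-fixed is the conjunction of these two conditions, which yields exactly the asserted description. I would also record that $x\mapsto \prod_h{}^{h}x$ is an injective homomorphism, hence an isomorphism $C_X(H')\cong C_Y(H)$; in particular $|C_Y(H)|=|C_X(H')|$.

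For part \eqref{prop:nleqmcentralizerquotientequivalence}, the idea is to reduce to part \eqref{prop:nleqmcentralizerdescription} orbit by orbit. Decompose $Y=Y_1\times\cdots\times Y_k$, where $Y_j$ is the product of the components in the $j$-th $I$-orbit; each $Y_j$ is $I$-invariant with $I$ transitive on its components and common component stabilizer $I'$, so part \eqref{prop:nleqmcentralizerdescription} applied to $I$ acting on $Y_j$ gives $|C_{Y_j}(I)|=|C_X(I')|$. Hence $|C_Y(I)|=|C_X(I')|^k$, while $|C_Y(H)|=|C_X(H')|$. Because $I\leq H$ we always have $C_Y(H)\leq C_Y(I)$, so $C_Y(I)>C_Y(H)$ is equivalent to the strict inequality $|C_X(I')|^k>|C_X(H')|$. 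I would then split on $k$: for $k=1$ this reads $|C_X(I')|>|C_X(H')|$, i.e.\ $C_X(I')>C_X(H')$ since $C_X(H')\leq C_X(I')$; for $k>1$, if $C_X(I')=1$ then $C_X(H')=1$ too and both sides equal $1$, whereas if $C_X(I')>1$ then $|C_X(I')|^k\geq |C_X(I')|^2>|C_X(I')|\geq |C_X(H')|$, so the inequality holds exactly when $C_X(I')>1$. This produces the stated disjunction.

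The main obstacle is not computation but bookkeeping: establishing that every component has stabilizer $H'$ in $H$ and $I'$ in $I$ across all orbits, and then being careful that ``$C_Y(I)>C_Y(H)$'' means \emph{proper} containment, which only collapses to the clean order inequality $|C_X(I')|^k>|C_X(H')|$ thanks to the automatic containment $C_Y(H)\leq C_Y(I)$. Once the uniform stabilizer description and the order formula $|C_Y(I)|=|C_X(I')|^k$ are in place, the final case analysis on $k$ is the short inequality argument above.
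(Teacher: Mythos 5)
Your proof is correct and follows essentially the same route as the paper: part (1) via the splitting $H=H'\times H''$, computing the diagonal fixed points of $H''$ and then imposing the $H'$-condition, and part (2) via the orbit decomposition $Y=Y_1\cdots Y_k$ and reduction to part (1) on each orbit. The only difference is that you make explicit (via the order count $|C_Y(I)|=|C_X(I')|^k$ versus $|C_Y(H)|=|C_X(H')|$ and the case analysis on $k$) the final step the paper leaves as ``inspection,'' and you prove the regular-action fixed-point description directly rather than citing \cite[Lemma 3.27]{GLSII}.
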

\begin{proof}
Set $H'=N_H(X)$ and choose $H''\leq H$ such that $H=H'\times H''$. Then we have $Y={}^{h_1}X\times {}^{h_2}X\times\ldots\times {}^{h_m}X$, where $H''=\{h_1=1,h_2,\ldots,h_m\}$. Moreover, $C_Y(H)=C_Y(H')\cap C_Y(H'')$ and we compute these two centralizers separately. For the former, we have $C_Y(H')=\prod_{h\in H''} C_{{}^h X}(H')=\prod_{h\in H''} {}^h C_X(H')$. For the latter, notice that $H''$ regularly permutes the components of $Y$. The map $X\to Y$ given by $x\mapsto \prod_{h\in H''} {}^h x$ is a 
homomorphism which image is exactly $C_Y(H'')$ (cf. \cite[Lemma 3.27]{GLSII}). The expression in \eqref{prop:nleqmcentralizerdescription} follows.

Regarding \eqref{prop:nleqmcentralizerquotientequivalence}, write $Y=Y_1\cdots Y_k$, where each $Y_l$ is an orbit for the permutation action of $I$ on the components of $Y$. Then $C_Y(I)=C_{Y_1}(I)\cdots C_{Y_k}(I)$. For the action of $I$ on each orbit $Y_l$ and with the notation of \eqref{prop:nleqmcentralizerdescription}, we have $I'=I\cap H'$ and we may choose $I''$ and $H''$ such that $I''=I\cap H''$. Moreover, if $J\leq H''$ satisfies $H''=I''\times J$, then we may write $J=\{j_1,\ldots,j_k\}$ with $Y_l$ the $I$-orbit of ${}^{j_l}X$:
\[
	Y_l=\prod_{i\in I''} {}^{i}({}^{j_l}X).
\]
Note that $k=|J|=|H'':I''|=\frac{|H:H'|}{|I:I'|}$. The result follows from inspection of the expressions in \eqref{prop:nleqmcentralizerdescription} for the centralizers $C_Y(H)$ and $C_{Y_l}(I)$ for $l=1,\ldots k$.
\end{proof}

The next result is of independent interest and it is the only place where we use the Classification of the Finite Simple Groups (CFSG).

\begin{prop}\label{prop:nleqmCFSG}
Let $H$ be an elementary abelian $p$-group acting on a direct product $Y$ of $m$ copies of a nonabelian simple $p'$-group $X$ and transitively permuting these components. Let $H_1,\ldots,H_n$ be linearly independent hyperplanes of $H$ such that $C_Y(H_i)>C_Y(H)$ for all $i$. Then there exist elements $c_i\in C_Y(H_i)\setminus C_Y(H)$ such that $[c_i,c_j]=1$ for all $i,j$.
\end{prop}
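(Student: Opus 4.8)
The plan is to reduce the whole statement to the purely group-theoretic problem of producing a single abelian subgroup of $Y$ that meets each centralizer $C_Y(H_i)$ outside $C_Y(H)$. Indeed, if $T\le Y$ is abelian and $T\cap C_Y(H_i)\not\subseteq C_Y(H)$ for every $i$, then choosing $c_i\in (T\cap C_Y(H_i))\setminus C_Y(H)$ gives elements with $c_i\in C_Y(H_i)\setminus C_Y(H)$, and $[c_i,c_j]=1$ holds automatically because all the $c_i$ lie in the abelian group $T$. So the task becomes the construction of such a $T$. To build it I would first fix the component structure as in Lemma~\ref{prop:nleqm}: set $H'=N_H(X)$, the common stabilizer of a component (independent of the component since $H$ is abelian and transitive), choose a complement $H=H'\times H''$ so that $H''$ permutes the $m=|H:H'|$ components regularly, and record that $C_Y(H)$ is the diagonal copy of $C_X(H')$ described in Lemma~\ref{prop:nleqm}\eqref{prop:nleqmcentralizerdescription}.

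Next I would sort the hyperplanes according to their position relative to $H'$. If $H'\le H_i$ then $H_i$ has exactly $p$ orbits on the components and, by Lemma~\ref{prop:nleqm}\eqref{prop:nleqmcentralizerquotientequivalence}, the hypothesis $C_Y(H_i)>C_Y(H)$ forces $C_X(H')>1$; if $H'\not\le H_i$ then $H'H_i=H$, the group $H_i$ is transitive on the components, $H_i':=H_i\cap H'$ is a hyperplane of $H'$, and the same lemma turns $C_Y(H_i)>C_Y(H)$ into $C_X(H_i')>C_X(H')$. I would then take $T$ of the form $T=\prod_{\bar h} {}^{r_{\bar h}}T_0$, the ``induced'' subgroup whose component indexed by $\bar h$ is the translate of a fixed subgroup $T_0\le X$ by a coset representative $r_{\bar h}$; this $T$ is automatically abelian, and it is $H$-invariant precisely when $T_0$ is $H'$-invariant. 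Tracking the twists—each component of the witnessing elements I build is an $H'$-conjugate of a single element of $C_X(H')$ or of $C_X(H_i')$, and $T_0$ is $H'$-invariant—reduces the two intersection requirements to: $T_0\cap C_X(H')\neq 1$ whenever a hyperplane with $H'\le H_i$ occurs, and $T_0\cap C_X(H_i')\not\subseteq C_X(H')$ for each hyperplane with $H'\not\le H_i$. This is exactly the $m=1$ instance of the proposition, phrased as the existence of an invariant abelian subgroup.

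The heart of the argument, and where the Classification enters, is therefore the single-component statement: for the coprime action of the elementary abelian $p$-group $H'$ on the nonabelian simple group $X$, and hyperplanes $\bar H_s\le H'$ with $C_X(\bar H_s)>C_X(H')$, there is an $H'$-invariant abelian subgroup $T_0\le X$ with $T_0\cap C_X(\bar H_s)\not\subseteq C_X(H')$ for each $s$ (and $T_0\cap C_X(H')\neq 1$ when that case is present). For $X$ of Lie type in characteristic different from $p$ I would seek an $H'$-invariant maximal torus and argue that the required growth of fixed points across the hyperplanes is already visible inside a suitably chosen such torus; for the alternating and sporadic groups I would verify the existence of $T_0$ directly from the known subgroup structure. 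I expect this last step to be the main obstacle: it is genuinely a statement about every finite simple group, so it must be checked family by family, and the delicate point is that one invariant abelian subgroup has to witness the strict inequality $C_X(\bar H_s)>C_X(H')$ \emph{simultaneously} for all of the hyperplanes, rather than one at a time. Once $T_0$, and hence $T$, is in hand, the commuting elements $c_i$ are produced as above, and they are precisely the input required by Theorem~\ref{thm:existenceof2system}.
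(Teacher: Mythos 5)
Your reduction is sound, and it is essentially the same skeleton as the paper's own proof: there too the commuting elements $c_i$ are twisted-diagonal products whose component entries all lie in a single $H'$-invariant abelian subgroup of $X$ (a maximal torus in the Lie-type case), so packaging this as ``find an induced abelian subgroup $T$ meeting each $C_Y(H_i)$ outside $C_Y(H)$'' is a faithful, and arguably cleaner, formulation of what actually happens; your case division of the hyperplanes via Lemma~\ref{prop:nleqm} also matches the paper's exactly.

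The genuine gap is the single-component statement, which you yourself flag as ``the main obstacle'': that statement is the entire content of the proposition, and your plan for it (an invariant maximal torus for Lie type, a family-by-family check for alternating and sporadic groups, with the worry that one $T_0$ must witness all hyperplanes simultaneously) is not a proof, and it misses the one observation that makes the statement tractable. Since $X$ is a $p'$-group, the $p$-group $\Aut_H(X)=N_H(X)/C_H(X)$ meets $\Inn(X)\cong X$ trivially and hence embeds in $\Out(X)$; the CFSG fact \eqref{equ:pouterautomorphismsp'nonabeliansimplegroup} then says that if this group is nontrivial, $X$ is of Lie type and $\Aut_H(X)$ is cyclic of order $p$, generated by a field automorphism $\sigma$. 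This has exactly the three consequences your outline lacks: (a) alternating and sporadic groups never occur with nontrivial action, so there is nothing to check for them (and if the action of $H'$ on $X$ is trivial, no hyperplane of $H'$ has fixed-point growth, and any nontrivial abelian subgroup of $X=C_X(H')$ serves as $T_0$); (b) every hyperplane $H_i$ of the transitive kind satisfies $H_i\cap H'=C_H(X)$, i.e.\ they all determine the \emph{same} hyperplane of $H'$, so the simultaneity you fear does not arise --- there is exactly one growth condition, namely $C_X(C_{H'}(X))=X>X^\sigma=C_X(H')$; and (c) the required $T_0$ is then simply a $\sigma$-invariant maximal torus containing a nontrivial $\sigma$-fixed element and a non-$\sigma$-fixed element. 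Without the embedding into $\Out(X)$ and \eqref{equ:pouterautomorphismsp'nonabeliansimplegroup}, your route would require establishing a fixed-point growth property for every finite simple group under every coprime elementary abelian action --- a far heavier task than the one the paper actually performs, and precisely the step your proposal leaves undone.
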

\begin{proof}
Write $H'=N_H(X)$, $H'_i=N_{H_i}(X)$ and choose $H''$ and $H''_i$ for each $i$ such that $H=H'\times H''$ and $H_i=H_i'\times H_i''$.  As $H'_i\leq H'$ and $|H:H_i|=p$, we have:
\begin{enumerate}
\item either $H'=H'_i$ and $|H''|/|H''_i|=p$, 
\item or $|H':H_i'|=p$ and $|H''|=|H''_i|$.
\end{enumerate}
In case $(1)$, we have $C_X(H')=C_X(H'_i)$ and, by Lemma \ref{prop:nleqm}\eqref{prop:nleqmcentralizerquotientequivalence}, the permutation action of $H_i$ on $Y$ has exactly $p$ orbits and $|C_X(H'_i)|>1$. In case $(2)$, Lemma \ref{prop:nleqm}\eqref{prop:nleqmcentralizerquotientequivalence} implies that $C_X(H'_i)>C_X(H')$. If $C_H(X)=N_H(X)$, then also $C_{H_i}(X)=N_{H_i}(X)$ and $C_X(H')=C_X(H'_i)=X$, a contradiction. Otherwise, $\Aut_H(X)=N_H(X)/C_H(X)$ is a non-trivial group of outer automorphisms of $X$. But:
\begin{align}
&\textit{If $P$ is a $p$-group and $1\neq P\leq \Out(X)$ for a nonabelian simple $p'$-group $X$,}\label{equ:pouterautomorphismsp'nonabeliansimplegroup}\\
&\textit{then $X$ is of Lie type and $P$ is cyclic and consists of field automorphisms.}\nonumber
\end{align}
The proof of the claim \eqref{equ:pouterautomorphismsp'nonabeliansimplegroup} is an  exhaustive check via the CFSG. Details are provided in \cite[Theorem 8.2.12(2)]{Smith2011} and the same argument is used in \cite{AS1993}. Hence, $\Aut_H(X)$ is cyclic of order $p$. Then, either $|C_H(X):C_{H_i}(X)|=|N_{H_i}(X):C_{H_i}(X)|=p$ or these two indexes are equal to $1$. In the former case, $\Aut_{H_i}(X)$ is a non-trivial subgroup of $\Aut_H(X)$ and hence $\Aut_{H_i}(X)=\Aut_H(X)$. Then $H'=H'_i+C_H(X)$ and $C_X(H')=C_X(H'_i)$, contradiction again. So the latter case must hold, i.e., $C_H(X)=C_{H_i}(X)=N_{H_i}(X)$. We deduce that $C_X(H'_i)=X$ and, from \eqref{equ:pouterautomorphismsp'nonabeliansimplegroup}, that $C_X(H')=X^{\sigma}$ for a field automorphism $\sigma$ of $X$ of  order $p$. To sum up, we conclude that for each hyperplane $H_i$:
\begin{enumerate}
\item either $H_i$ has $p$ permutation orbits on $Y$ and $C_X(H'_i)=C_X(H')>1$,
\item or $H_i$ transitively permutes $Y$ and $C_X(H'_i)=X>C_X(H')=X^\sigma$.
\end{enumerate}
Now we construct the elements $c_i$ in the statement. If all hyperplanes are of type $(1)$ then pick $c\in C_X(H')$ with $c\neq 1$ and set $c_i=\prod_{h\in H''_i} {}^hc$ for all $i$. Otherwise, some hyperplane is of type $(2)$, $X$ is of Lie type and we notice that:
\begin{equation}
\textit{There exists $c\in X^\sigma$ and $d\in X\setminus X^\sigma$  such that $[c,d]=1$.} \label{equ:elementsinXandXsigma}\nonumber
\end{equation}
The elements $c$ and $d$ may be chosen as elements in the maximal torus of $X$ satisfying $\sigma(c)=c$ and $\sigma(d)\neq d$. For $H_i$ of type $(1)$, set $c_i=\prod_{h\in H''_i} {}^hc$. For $H_i$ of type $(2)$, set  $c_i=\prod_{h\in H''} {}^h d$. These elements satisfy $[c_i,c_j]=1$ for all $i,j$.
\end{proof}

\begin{remark}
Under the hypotheses of Proposition \ref{prop:nleqmCFSG}, it also holds that $n\leq m$.
%
\end{remark}

\begin{thm}\label{thm:QDpcZ2p'proven}
$\Q\D^{c,\ZZ_2}_p$ holds for $K\rtimes \FF_p^r$ with $K$ a $p'$-group.
\end{thm}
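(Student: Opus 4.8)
The plan is to reduce the general $p'$-group $K$ to a situation where the structural results of this section, chiefly Corollary \ref{cor:abelian}, Proposition \ref{prop:nleqmCFSG}, and Theorem \ref{thm:quotient}, can be applied. Since $O_p(K\rtimes \FF_p^r)=1$, Equation \eqref{equ:O_p=1sameasfaithfulaction} tells us that $\FF_p^r$ acts faithfully on $K$. As in the solvable case (Theorem \ref{thm:QDpcZ2solvableproven}), the first step is to pass to a canonical normal subgroup on which faithfulness is preserved: here I would replace the Fitting subgroup by the \emph{generalized} Fitting subgroup $F^*(K)$, which for a $p'$-group is self-centralizing in the sense that $C_K(F^*(K))\leq F^*(K)$. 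This guarantees that $\FF_p^r$ still acts faithfully on $F^*(K)$. Then, passing to the Frattini-type quotient, $F^*(K)/\Phi(F^*(K))$ splits as a direct product of an elementary abelian group with a product of nonabelian simple groups, and by the Burnside-type argument used before, faithfulness is again preserved on this quotient. By Theorem \ref{thm:quotient} and the inclusion of constructible homology under subgroups, it suffices to produce a nonzero constructible class in $\widetilde H^c_{r-1}(|\A_p(L\rtimes \FF_p^r)|;\ZZ_2)$ where $L$ is this quotient.

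The heart of the argument is then to build a $2$-system for $L\rtimes \FF_p^r$, i.e. to verify the hypothesis of Theorem \ref{thm:existenceof2system}: one needs linearly independent hyperplanes $H_{[i]}$ with $N(H_{[i]})>1$ (these exist by Proposition \ref{prop:semidirectbasicsCpir} after a change of basis) together with commuting elements $c_i\in C_L(H_{[i]})\setminus C_L(H)$. Write $L=L_0\times L_1\times\cdots\times L_s$ where $L_0$ is the abelian (elementary abelian $p'$) factor and each $L_j$ for $j\geq 1$ is a product of $\FF_p^r$-conjugate nonabelian simple groups that $\FF_p^r$ permutes transitively within the block. For each independent hyperplane $H_{[i]}$, faithfulness together with Proposition \ref{prop:semidirectbasicsCpir} forces $C_L(H_{[i]})>C_L(H)$, and this enlargement must occur in at least one factor. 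The plan is to handle each factor separately: on the abelian factor $L_0$ any choice of the $c_i$ automatically commutes (as in Corollary \ref{cor:abelian}), while on each simple-block factor $L_j$ one invokes Proposition \ref{prop:nleqmCFSG} to obtain commuting elements $c_i^{(j)}\in C_{L_j}(H_{[i]})\setminus C_{L_j}(H)$. One then assembles $c_i$ componentwise, choosing in each factor a nontrivial contribution precisely when that factor witnesses $C_L(H_{[i]})>C_L(H)$.

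The main obstacle I anticipate is the bookkeeping required to make a single \emph{global} choice of the $c_i$ that simultaneously lies in $C_L(H_{[i]})\setminus C_L(H)$ and pairwise commutes across \emph{all} factors at once. The subtlety is that a fixed hyperplane $H_{[i]}$ may increase the centralizer in several factors at the same time, and Proposition \ref{prop:nleqmCFSG} is stated for a single permutation-transitive block. Thus I would apply Proposition \ref{prop:nleqmCFSG} within each block $L_j$ to its relevant hyperplanes, obtaining commuting families $\{c_i^{(j)}\}_i$, and then take $c_i=(c_i^{(0)},c_i^{(1)},\ldots,c_i^{(s)})$ where $c_i^{(j)}$ is the element produced in block $j$ if $H_{[i]}$ enlarges the centralizer there and is taken to be $1$ otherwise. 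Elements from distinct direct factors automatically commute, and within a factor commutation is exactly what Proposition \ref{prop:nleqmCFSG} (or abelianness of $L_0$) provides; I must only check that each $c_i$ is genuinely \emph{outside} $C_L(H)$, which holds because $H_{[i]}$ enlarges the centralizer in at least one factor and the corresponding component $c_i^{(j)}$ was chosen outside $C_{L_j}(H)$.

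Once the hypothesis of Theorem \ref{thm:existenceof2system} is verified for $L\rtimes \FF_p^r$, that theorem yields $\widetilde H^c_{r-1}(|\A_p(L\rtimes \FF_p^r)|;\ZZ_2)\neq 0$; Theorem \ref{thm:quotient} lifts this nonzero class up through the quotients $F^*(K)/\Phi(F^*(K))\twoheadleftarrow F^*(K)$, and the inclusion $\widetilde H^c_{r-1}(|\A_p(F^*(K)\rtimes \FF_p^r)|;\ZZ_2)\leq \widetilde H^c_{r-1}(|\A_p(K\rtimes \FF_p^r)|;\ZZ_2)$ completes the argument, giving $\Q\D^{c,\ZZ_2}_p$ for $K\rtimes \FF_p^r$.
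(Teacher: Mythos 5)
Your proposal is correct and follows essentially the same route as the paper's proof: faithfulness, passage to the self-centralizing $F^*(K)$ and then to its Frattini quotient $A\times B$, Burnside to preserve faithfulness, Proposition \ref{prop:semidirectbasicsCpir} for the hyperplanes, Proposition \ref{prop:nleqmCFSG} applied blockwise to the $H$-orbits of simple factors, assembly of commuting elements $c_i$, and then Theorem \ref{thm:existenceof2system}, Theorem \ref{thm:quotient} and the final inclusion. The only (harmless) deviation is that you place nontrivial components of $c_i$ in \emph{every} factor witnessing $C_L(H_{[i]})>C_L(H)$, whereas the paper selects a single such factor per $i$; both choices satisfy the hypothesis of Theorem \ref{thm:existenceof2system}.
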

\begin{proof}
As $O_p(K\rtimes H)=1$ we have that the action of $H$ on $K$ is faithful. Now, the generalized Fitting subgroup $F^*(K\rtimes H)=F^*(K)$ is self-centralizing \cite[31.13]{AS2000} and hence $H$ acts faithfully on it. Here, $F^*(K)=F(K)E(K)$ with $[E(K),F(K)]=1$ and where $E(K)$ is the \emph{layer}, which is a central product of quasisimple groups. Moreover,  By \cite[3.15(v)]{GLSII}, $\Phi(F^*(K))=\Phi(F(K))Z(E(K))$ and by \cite[3.5(v)]{GLSII}, $F(K)\cap E(K)=Z(E(K))$. Then by \cite[3.9(ii)]{GLSII}, it follows that $F^*(K)/\Phi(F^*(K))=A\times B$, where $A$ is elementary abelian and $B$ is a direct product of nonabelian simple $p'$-groups. By Burnside \cite[5.1.4]{Gor1980}, $H$ acts faithfully on $D=A\times B$, and we are going to show that the hypothesis of Theorem \ref{thm:existenceof2system} holds for $D\rtimes H$. Then the same arguments as in Theorem \ref{thm:QDpcZ2solvableproven} finish the proof. 

Write $B=\prod_j B_j$, where each $B_j$ is an $H$-orbit for the permutation action of $H$ on the direct factors of $B$. For any subgroup $I\leq H$ we have $C_D(I)=C_A(I)\times \prod_j C_{B_j}(I)$. By Proposition \ref{prop:semidirectbasicsCpir}, there are $r$ linearly independent hyperplanes of $H$, $H_1,\ldots,H_r$ such that $|C_D(H_i)|/|C_D(H)|>1$. Hence, for each $i$, either $|C_A(H_i)|/|C_A(H)|>1$ or $|C_{B_j}(H_i)|/|C_{B_j}(H)|>1$ for some $j$. In the former case, let $c'_i\in C_A(H_i)\setminus C_A(H)$ by any element in this non-empty set and set $c_i=(c'_i,1)\in C_D(H_i)\setminus C_D(H)$. To choose $c_i$ for the latter case, consider, for a fixed orbit $B_j$, the following set:
\[
\B_j=\{i\in\{1,\ldots,r\}\text{ such that } |C_{B_j}(H_i)|/|C_{B_j}(H)|>1\}.
\]
Let $B_j={}^{r_1}X\times {}^{r_2}X\times\ldots {}^{r_m}X$, where $X$ is a simple nonabelian $p'$-group and $r_1=1,r_2,\ldots,r_m$ are representatives for the set $H/N_H(X)$ of size $m$. By Proposition \ref{prop:nleqmCFSG}, there are elements $c''_i\in C_{B_j}(H_i)\setminus C_{B_j}(H)$ for each $i\in \B_j$ such that $[c''_i,c''_{i'}]=1$ for all $i,i'\in \B_j$. Then define $c'_i=(1,1,\ldots,1,c''_i,1,\ldots,1)\in C_B(H_i)\setminus C_B(H)$, where we place $c'_i$ in the position of the orbit $B_j$. Define $c_i=(1,c'_i)\in C_D(H_i)\setminus C_D(H)$. It is straightforward that the chosen elements $c_i$'s satisfy the hypothesis of Theorem \ref{thm:existenceof2system}, i.e., they commute pairwise.
\end{proof}

We finally prove Theorem \ref{thm:QCpsolvable}

\begin{thm}\label{thm:QCpsolvableproven}
Quillen's conjecture holds for $p$-solvable groups.
\end{thm}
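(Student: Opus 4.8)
The plan is to reduce the general $p$-solvable case to the configuration already handled in Theorem \ref{thm:QDpcZ2p'proven}, exactly mimicking the reduction used in Theorem \ref{thm:QCsolvableproven} for the solvable case. So let $G$ be a $p$-solvable group with $O_p(G)=1$, let $r$ be the $p$-rank of $G$, and choose $H\in \A_p(G)$ of rank $r$. The goal is to produce a faithful action of $H$ on a $p'$-group $K$ so that $\widetilde H^c_{r-1}(|\A_p(K\rtimes H)|;\ZZ_2)\neq 0$ and then transport this non-triviality up to $G$ via the subcomplex inclusion.

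The key step is to identify the right normal $p'$-subgroup to play the role of $K$. Since $G$ is $p$-solvable and $O_p(G)=1$, the natural candidate is $K=O_{p'}(G)$. The crucial structural fact I would invoke is that, for a $p$-solvable group with $O_p(G)=1$, the subgroup $O_{p'}(G)$ is self-centralizing, i.e. $C_G(O_{p'}(G))\leq O_{p'}(G)$; this is the $p$-solvable analogue of the Hall--Higman lemma \cite[Lemma 1.2.3]{HH1956} already used in Theorem \ref{thm:QCsolvableproven}. From self-centralization it follows that $H$ acts faithfully on $K=O_{p'}(G)$: any element of $H$ acting trivially on $K$ would centralize $K$, hence lie in $C_G(K)\leq K$, but $H\cap K=1$ since $H$ is a $p$-group and $K$ is a $p'$-group. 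Therefore the action is faithful.

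Once faithfulness is in hand, I would apply Theorem \ref{thm:QDpcZ2p'proven} directly to the semidirect product $K\rtimes H$ with $K=O_{p'}(G)$ a $p'$-group, obtaining $\widetilde H^c_{r-1}(|\A_p(K\rtimes H)|;\ZZ_2)\neq 0$. (Note the rank of $K\rtimes H$ is exactly $r$, since $H$ already realizes the $p$-rank of $G$ and adjoining a normal $p'$-group cannot raise the $p$-rank.) Finally, the natural poset inclusion $\A_p(K\rtimes H)\hookrightarrow \A_p(G)$ induced by $K\rtimes H\leq G$ yields an inclusion of chain complexes and hence a map $\widetilde H_{r-1}(|\A_p(K\rtimes H)|;\ZZ_2)\to \widetilde H_{r-1}(|\A_p(G)|;\ZZ_2)$; because our class lives in top dimension $r-1$ and is represented by a cycle supported on elementary abelian subgroups of $K\rtimes H$ which remain distinct and non-trivial inside $G$, it maps to a non-trivial class. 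Thus $\widetilde H_{r-1}(|\A_p(G)|;\ZZ_2)\neq 0$, so $|\A_p(G)|$ is not contractible, which is precisely Quillen's conjecture for $G$.

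The only genuinely delicate point is the transport of non-triviality along the inclusion $\A_p(K\rtimes H)\hookrightarrow \A_p(G)$: one must argue that a non-zero top-dimensional constructible class does not become a boundary in the larger complex. In the solvable proof this is asserted via the inclusion $\widetilde H_{r-1}(|\A_p(K\rtimes H)|;\ZZ_2)\leq \widetilde H_{r-1}(|\A_p(G)|;\ZZ_2)$, and the same reasoning applies here: since $r$ is the full $p$-rank of $G$, the chain group $C_{r-1}$ for $G$ and for $K\rtimes H$ receive no differentials from dimension $r$ (there are no $r$-simplices, as $r-1$ is top), so a cycle in top dimension that is non-zero in homology for the subgroup cannot be killed by passing to $G$; the induced map on $\widetilde H_{r-1}$ is injective on these top classes. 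I expect this observation to be the main thing requiring care, though it is essentially a formal consequence of working in top dimension.
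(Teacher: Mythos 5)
Your proof is correct and takes essentially the same route as the paper: the paper likewise sets $K=O_{p'}(G)$, invokes Hall--Higman \cite[Lemma 1.2.3]{HH1956} (valid for $p$-soluble groups) to get a faithful action of a maximal-rank $H\in\A_p(G)$ on $K$, applies Theorem \ref{thm:QDpcZ2p'proven}, and concludes via the inclusion $\widetilde H_{r-1}(|\A_p(K\rtimes H)|;\ZZ_2)\leq \widetilde H_{r-1}(|\A_p(G)|;\ZZ_2)$. Your final paragraph simply makes explicit the top-dimension argument that the paper leaves implicit in asserting that inclusion.
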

\begin{proof}
As Hall-Highman \cite[Lemma 1.2.3]{HH1956} is valid for $p$-soluble groups, the argument employed in the proof of Theorem \ref{thm:QCsolvableproven} can be used here but for replacing Theorem \ref{thm:QDpcZ2solvableproven} by Theorem \ref{thm:QDpcZ2p'proven}.
\end{proof}

\begin{finalremark}\label{rmk:final}
Here we give more details on Alperin's approach compared to the approach on this work.  The reductions for the former route are as follows, where $K$ is a $p'$-group:
\[
\xymatrix@R=10pt{
\text{$G$ $p$-solvable }\ar@{~>}[r]^<<<<<<<<<{\eqref{equ:Quillenenoughforpsolvablecase}} & \text{$K\rtimes \FF_p^r$}\ar@{~>}[d]^{\text{CFSG}} \\
\text{$G$ solvable }\ar@{~>}[r]^<<<<<<{\eqref{equ:Quillenenoughforsolvablecase}} & \text{$K\rtimes \FF_p^r$, $K$ solvable } \ar@{~>}[r]^{\text{Alperin}} & \text{Join of $r$ $0$-spheres} 
}
\]
Here, CFSG refers to the argument \eqref{equ:pouterautomorphismsp'nonabeliansimplegroup} already used in the proof of Proposition \ref{prop:nleqmCFSG}. The rightmost reduction on the bottom line consists of Alperin's coprime action arguments leading to a minimal counterexample for which the join of spheres may be considered. Our approach can be summarized as follows:
\[
\xymatrix@R=10pt{
\text{$G$ $p$-solvable }\ar@{~>}[r]^{\eqref{equ:Quillenenoughforpsolvablecase}} & \text{$K\rtimes \FF_p^r$} \ar@{~>}[r]^<<<<<<<<<<{\ref{thm:quotient}} & \text{$K\rtimes \FF_p^r$, $K=A\times S$}\ar@{~>}[rd]^<<<<<<<<{\text{\ref{thm:existenceof2system}$+$CFSG}}\\
&&&\text{\ref{thm:Dsystem}}\\
\text{$G$ solvable }\ar@{~>}[r]^<<<<<<{\eqref{equ:Quillenenoughforsolvablecase}} & \text{$K\rtimes \FF_p^r$, $K$ solvable }\ar@{~>}[r]^<<<<<<{\text{\ref{thm:quotient}}} & \text{$K\rtimes \FF_p^r$, $K=A$}
\ar@{~>}[ru]_<<<<<<<<<<<{\text{\ref{thm:existenceof2system}$+$abelian}}
}
\]
Here, $A$ is an abelian $p'$-group and $S$ is a $p'$-group that is a direct product of nonabelian  simple groups. Theorem \ref{thm:quotient} lifts classes from the quotient, and it is a tool that was not available before. Theorem \ref{thm:existenceof2system} implies the existence of a $2$-system, and it is a generalization to possibly non-solvable groups of the join of spheres construction (see Remark \ref{rmk:liftstoZZjoinofspheres}).
\end{finalremark}

\end{document}